\DeclareMathAlphabet{\mathpzc}{OT1}{pzc}{m}{it}
\newtheorem{thm}{Theorem}
\newtheorem{cor}[thm]{Corollary}
\newtheorem{lem}[thm]{Lemma}
\newtheorem{prop}[thm]{Proposition}
\newtheorem{rem}[thm]{Remark}
\newtheorem{ass}[thm]{Assumption}
\newtheorem{ex}[thm]{Example}
\numberwithin{equation}{section}
\newcommand{\Ax}{\mathcal{A}}
\newcommand{\Ao}{\ensuremath{\mathcal{A}}}
\newcommand{\Aon}[1][n]{\ensuremath{\Ao_{n}}}
\newcommand{\abs}[1]{\ensuremath{\left|#1\right|}}
\newcommand{\asdefined}{\mathrel{=:}}
\newcommand{\bsS}{\ensuremath{\boldsymbol{S}}}
\newcommand{\bSn}{\ensuremath{\boldsymbol{S}^n}}
\newcommand{\bsD}{\ensuremath{\boldsymbol{D}}}
\newcommand{\bss}{\ensuremath{\boldsymbol{s}}}
\newcommand{\bsd}{\ensuremath{\boldsymbol{d}}}
\newcommand{\bbI}{\ensuremath{\boldsymbol{1}}}
\newcommand{\Bogn}[1][n]{\mathfrak{B}^{#1}}
\DeclareMathOperator{\curl}{curl}
\renewcommand{\d}{\ensuremath{{\rm d}}}
\newcommand{\dt}{\ensuremath{\d t}}
\newcommand{\dx}{\ensuremath{\d x}}
\newcommand{\dy}{\ensuremath{\d y}}
\newcommand{\definedas}{\mathrel{:=}}
\newcommand{\dual}[2]{\ensuremath{\left\langle #1,\,#2\right\rangle}}
\DeclareMathOperator{\diam}{diam}
\DeclareMathOperator{\divo}{div}
\DeclareMathAlphabet{\lf}{OT1}{pzc}{m}{it}
\newcommand{\eg}{e.\,g.\xspace}
\newcommand{\elm}{\ensuremath{E}\xspace}
\newcommand{\grid}{\mathcal{G}}
\newcommand{\gridn}[1][n]{\grid_{#1}}
\newcommand{\Gx}{\ensuremath{\boldsymbol{G}_x}}
\newcommand{\helm}[1][\ell]{\ensuremath{h_{\elm}}}
\newcommand{\ie}{\hbox{i.\,e.},\xspace}
\newcommand{\mat}[1]{\boldsymbol{#1}}
\newcommand{\mcD}{\mathcal{D}}
\newcommand{\mcU}{\mathcal{U}}
\newcommand{\mcH}{\mathcal{H}}
\newcommand{\mcC}{\mathcal{C}}
\newcommand{\mcQ}{\mathcal{Q}}
\newcommand{\Mspace}[1][\Rdds]{\ensuremath{\mathfrak{M}({#1})}}
\newcommand{\new}[1]{{\color{black}{#1}}}
\newcommand{\nablas}{\boldsymbol{D}}
\newcommand{\N}{\ensuremath{\mathbb{N}}}
\newcommand{\nodes}{\ensuremath{\mathcal{N}}}
\newcommand{\norm}[2][\Omega]{\ensuremath{\|#2\|_{#1}}}
\providecommand{\Poincare}{{Poincar{\'e}}\xspace}
\renewcommand{\P}{\ensuremath{\mathbb{P}}}
\renewcommand{\paragraph}[1]{\noindent\raisebox{0pt}[10pt][0pt]{\textbf{#1.}}}
\newcommand{\Pndiv}[1][n]{\ensuremath{\Pi^{#1}_{\divo}}}
\newcommand{\PnQ}[1][n]{\ensuremath{\Pi^{#1}_\Q}}
\newcommand{\Q}{\ensuremath{\mathbb{Q}}}
\newcommand{\Qn}[1][n]{\ensuremath{\mathbb{Q}^{#1}}}
\newcommand{\R}{\ensuremath{\mathbb{R}}}
\newcommand{\Rd}{\ensuremath{\mathbb{R}^d}}\newcommand{\Rdd}{\ensuremath{\mathbb{R}^{d\times d}}}
\newcommand{\Rdds}{{\ensuremath{\Rdd_{\text{sym}}}}}
\newcommand{\scp}[3][\Omega]{\ensuremath{\left\langle #2,\,#3\right\rangle}_{#1}}
\newcommand{\side}{\ensuremath{S}\xspace}
\newcommand{\sides}{\mathcal{S}}
\newcommand{\sectionref}[1]{\S\ref{#1}}
\def\vec#1{\boldsymbol{#1}}
\newcommand{\tr}{\ensuremath{\tilde r}}
\newcommand{\Tri}{\mathcal{B}}
\newcommand{\Trilin}[4][]{\ensuremath{\Tri_{#1}[#2,\,#3,\,#4]}}
\newcommand{\Un}[1][n]{\ensuremath{\vec{U}^{#1}}}
\renewcommand{\vec}[1]{\ensuremath{\boldsymbol{#1}}}
\newcommand{\vecu}{\ensuremath{\vec{u}}}
\newcommand{\vece}{\ensuremath{\vec{e}}}
\newcommand{\vecE}{\ensuremath{\vec{E}}}
\newcommand{\vecv}{\vec{v}}
\newcommand{\vecV}{\vec{V}}
\newcommand{\vecVn}[1][n]{\vec{V}^{#1}}
\newcommand{\vecf}{\vec{f}}
\newcommand{\vecw}{\vec{w}}
\newcommand{\V}{\ensuremath{\mathbb{V}}}
\newcommand{\Vn}[1][n]{\ensuremath{\mathbb{V}^{#1}}}
\newcommand{\Vndiv}[1][n]{\ensuremath{\mathbb{V}^{#1}_{\divo}}}
\providecommand{\vec}[1]{\boldsymbol{#1}}
\newcommand{\weak}{\ensuremath{\rightharpoonup}}
\title{Finite element approximation
  of steady flows of incompressible fluids with\\ implicit power-law-like rheology}
\author{Lars Diening\thanks{Mathematisches Institut der Universit{\"a}t M{\"u}nchen,
 Theresienstrasse 39, D-80333 M{\"u}nchen, Germany ({\tt lars.diening@mathematik.uni-muenchen.de}).}
 \and Christian Kreuzer\thanks{Fakult{\"a}t f{\"u}r Mathematik, Ruhr-Universit{\"a}t Bochum,
 Universit{\"a}tsstrasse 150, D-44801 Bochum, Germany ({\tt christan.kreuzer@rub.de}).}
 \and Endre S{\"u}li\thanks{Mathematical Institute, University of Oxford,
  24-29 St Giles', Oxford OX1 3LB, UK ({\tt endre.suli@maths.ox.ac.uk}).}}
\begin{document}
\maketitle

\begin{abstract}
  We develop the analysis of finite element approximations of implicit
  power-law-like models for viscous incompressible fluids.  The Cauchy
  stress and the symmetric part of the velocity gradient in the class
  of models under consideration are related by a, possibly multi--valued, maximal monotone
  $r$-graph, with $1<r<\infty$. Using a variety of weak compactness
  techniques, including Chacon's biting lemma and Young measures, we
  show that a subsequence of the sequence of finite element solutions
  converges to a weak solution of the problem as the finite element
  discretization parameter $h$ tends to $0$. A key new technical tool
  in our analysis is a finite element counterpart of the Acerbi--Fusco
  Lipschitz truncation of Sobolev functions.
\end{abstract}

\begin{keywords}
Finite element methods, implicit constitutive models, power-law fluids, convergence,
weak compactness, discrete Lipschitz truncation
\end{keywords}

\begin{AMS}
Primary 65N30, 65N12. Secondary 76A05, 35Q35
\end{AMS}

\pagestyle{myheadings}
\thispagestyle{plain}
\markboth{LARS DIENING, CHRISTIAN KREUZER, AND ENDRE S{\"U}LI}{FEM FOR IMPLICITLY CONSTITUTED FLUID FLOW MODELS}

\section{Introduction}
Most physical models describing fluid flow rely on the assumption that
the Cauchy stress is an explicit function of the symmetric part of the
velocity gradient of the fluid. This assumption leads to the
Navier--Stokes equations and its nonlinear generalizations, such as
various electrorheological flow models; see, e.\,g.,
\cite{Ladyzhenskaya:69,Lions:69,Ruzicka:00}.  It is known however that
the framework of classical continuum mechanics, built upon the notions
of current and reference configuration and an explicit constitutive
equation for the Cauchy stress, is too narrow to enable one to model
inelastic behavior of solid-like materials or viscoelastic properties
of materials. Our starting point in this paper is therefore a
generalization of the classical framework of continuum mechanics,
called the implicit constitutive theory, which was proposed recently
in a series of papers by Rajagopal; see, for example,
\cite{Rajagopal:03,Rajagopal:06}.  The underlying principle of the
implicit constitutive theory in the context of viscous flows is the
following: instead of demanding that the Cauchy stress is an explicit
function of the symmetric part of the velocity gradient, one may allow
an implicit and not necessarily continuous relationship between
these quantities. The resulting general theory therefore admits fluid
flow models with implicit and possibly discontinuous power-law-like
rheology; see, \cite{Malek:08,Malek:07}. Very recently a rigorous
mathematical existence theory was developed for these models by
Bul{\'\i}\v{c}ek, Gwiazda, M{\'a}lek, and {\'S}wierczewska-Gwiazda in
\cite{BulGwiMalSwi:09}. Motivated by the ideas in
\cite{BulGwiMalSwi:09}, we consider the construction of finite element
approximations of implicit constitutive models for incompressible
fluids and we develop the convergence theory of these numerical
methods by exploiting a range of weak compactness arguments.

Let $\Omega\subset \R^d$, $d\in\N$, be a bounded open Lipschitz domain with polyhedral boundary.
For $r\in(1,\infty)$ we define $r'\in(1,\infty)$ by the equality $\tfrac1r+\tfrac1{r'}=1$ and we set
\begin{align}\label{eq:tr}
  \tr \definedas \min\{r',r^*/2\}, \qquad\text{where}\qquad
  r^*\definedas
  \begin{cases}
    \tfrac{dr}{d-r}\quad&\text{if}~r<d,
    \\
    \infty\quad &\text{otherwise}.
  \end{cases}
\end{align}
We refer the reader to the first paragraph in Section \ref{s:inf-sup}
for the definitions of the various function spaces used in the paper
and for a list of our notational conventions.

\medskip

\paragraph{Problem}
For $\vecf\in W^{-1,r'}(\Omega)^d$ find
$(\vec{u},p,\mat{S})\in W_0^{1,r}(\Omega) ^d\times L_0^{
  \tr}(\Omega)\times L^{r'}(\Omega)^{d\times d}$ such that
\begin{align}\label{eq:implicit}
  \begin{alignedat}{2}
    \divo(\vec{u}\otimes\vec{u} +  p\bbI -\vec{S})&=\vecf &\quad
    &\text{in} \quad \mcD'(\Omega),
    \\
    \divo\vec{u} &=0& \quad&
    \text{in} \quad \mcD'(\Omega),
    \\
    (\nablas \vec{u}(x), \mat{S}(x)) &\in\Ax(x)&\quad&\text{for almost
      every}~x\in\Omega.
  \end{alignedat}
\end{align}
The symmetric part of the gradient is defined by $\nablas
\vec{u}\definedas \frac12(\nabla \vec{u}+(\nabla \vec{u})^{\rm T})$.
As is implied by the choice of the solution space $W_0^{1,r}(\Omega) ^d$
for the velocity, a homogenous Dirichlet boundary condition is assumed here for $\vecu$.

\medskip

The implicit law, which relates the shear rate $\nablas\vec{u}$ to the shear
stress $\vec{S}$, is given by an nonhomogeneous maximal monotone $r$-graph
$\Ax:x\mapsto\Ax(x)$. In particular, we assume that the following properties hold for almost every
$x\in\Omega$:
\begin{enumerate}[leftmargin=1cm,itemsep=1ex,label=(A\arabic{*})]
\item\label{A1} $(\vec{0},\vec{0})\in\Ax(x)$;
\item\label{A2} For all $(\vec{\delta}_1,\vec{\sigma}_1),(\vec{\delta}_2,\vec{\sigma}_2)\in\Ax(x)$,
  \begin{align*}
    (\vec{\sigma}_1-\vec{\sigma}_2):(\vec{\delta}_1-\vec{\delta}_2)\geq
    0 \qquad\text{($\Ax(x)$ is a monotone graph)},
  \end{align*}
  and if $\vec{\delta}_1\neq\vec{\delta}_2$ and $\vec{\sigma}_1\neq
  \vec{\sigma}_2$, then the inequality is strict;
\item\label{A3} If
  $(\vec{\delta},\vec{\sigma})\in\Rdd_{\text{sym}}\times\Rdd_{\text{sym}}$
  and
  \begin{align*}
    (\bar{\vec{\sigma}}-\vec{\sigma}):(\bar{\vec{\delta}}-
    \vec{\delta})\geq 0\quad\text{for all}~
    (\bar{\vec{\delta}},\bar{\vec{\sigma}})\in \Ax(x),
  \end{align*}
  then $(\vec{\delta},\vec{\sigma})\in\Ax(x)$ (i.e., $\Ax(x)$ is a
  maximal monotone graph);
\item\label{A4}There exists a constant $c>0$ and a nonnegative $m\in L^1(\Omega)$,
  such that for all $(\vec{\delta},\vec{\sigma})\in\Ax(x)$ we have
  \begin{align*}
    \vec{\sigma}:\vec{\delta}\ge c(|\vec{\delta}|^r+|\vec{\sigma}|^{r'})- m(x) \qquad\text{(i.e., $\Ax(x)$
      is an $r$-graph);}
  \end{align*}
\item\label{A5}
  \begin{enumerate}[leftmargin=1cm,label=(\roman{*})]
  \item For all $\vec{\delta}\in\Rdd_{\text{sym}}\definedas
    \{\vec{\zeta}\in\Rdd\colon \vec{\zeta}=\vec{\zeta}^{\rm T}\}$, the set
    \begin{align*}
      \{\vec{\sigma}\in\Rdds:(\vec{\delta},\vec{\sigma})\in\Ax(x)\}
    \end{align*}
    is closed;
  \item For any closed
    $\mathcal{C}\subset\Rdds$, the set
    \begin{align*}
      \big\{(x,\vec{\delta})\in\Omega\times\Rdd_{\text{sym}}:
      \text{there exists } \vec{\sigma}\in\mathcal{C},\text{ such that }
      (\vec{\delta},\vec{\sigma})\in\Ax(x)\big\}
    \end{align*}
    is measurable relative to the smallest $\sigma$-algebra
    $\mathfrak{L}(\Omega)\otimes\mathfrak{B}(\Rdd_{\text{sym}})$
    of the product of
    the   $\sigma$-algebra
    $\mathfrak{L}(\Omega)$ of Lebesgue measurable subsets of
    $\Omega$ and all Borel subsets
    $\mathfrak{B}(\Rdd_{\text{sym}})$ of $\Rdd_{\text{sym}}$.
  \end{enumerate}
\end{enumerate}
\medskip


The class of fluids described by \eqref{eq:implicit} is very general and includes not only Newtonian (Navier--Stokes) fluids ($\vec{S} = 2\mu_* \vec{D}\vec{v}$ with $\mu_*$ being a positive constant), but also standard power-law fluid models, where $\vec{S} = 2\mu_* |\vec{D}\vec{v}|^{r-2}\,\vec{D}\vec{v}$, $1 < r < \infty$, and their generalizations ($\vec{S} = 2\tilde\mu (|\vec{D}\vec{v}|^2)\, \vec{D}\vec{v}$), stress power-law fluid flow models and their generalizations of the form $\vec{D}\vec{v} = \alpha(|\vec{S}|^2)\, \vec{S}$, fluids with the viscosity depending on the shear rate and the shear stress
$\vec{S} = 2\hat\mu (|\vec{D}\vec{v}|^2, |\vec{S}|^2)\, \vec{D}\vec{v},$
as well as activated fluids, such as Bingham and Herschel--Bulkley fluids.
For further details concerning the physical background of the implicit constitutive theory we refer the reader to the papers by Rajagopal and Rajagopal \& Srinivasa \cite{Rajagopal:03,Rajagopal:06,RS08}, and the introductory sections of
Bul{{\'\i}}{\v{c}}ek, Gwiazda,  M{\'a}lek \& \'Swierczewska-Gwiazda
\cite{BulGwiMalSwi:09,BuGwMaSw11} and Bul{{\'\i}}{\v{c}}ek, M{\'a}lek \& S{\"u}li \cite{BMS12}.


It is proved in \cite{BulGwiMalSwi:09} that under the assumption
$r>\frac{2d}{d+2}$ problem \eqref{eq:implicit} has a weak solution.
The proof in \cite{BulGwiMalSwi:09} uses a sequence of
approximation spaces spanned by finite subsets of a Schauder basis of
an infinite-dimensional subspace of a Sobolev space, consisting of
exactly divergence-free functions. Since such a Schauder
basis is not explicitly available for computational purposes, here, instead,
we shall approximate~\eqref{eq:implicit} from two classes of inf-sup
stable pairs of finite element spaces. The first class contains
velocity-pressure space-pairs that do not lead to exactly
divergence-free velocity approximations.  For finite element spaces of
this kind our convergence result is restricted to $r>\frac{2d}{d+1}$.
In the case of exactly divergence-free finite element spaces for the
velocity we show that the resulting (sub)sequence of finite element
approximations converges to a weak solution of the problem for any $r>
\frac{2d}{d+2}$, as in \cite{BulGwiMalSwi:09}.

The paper is structured as follows.
In Section \ref{sec:preliminaries} we introduce the necessary
analytical tools, including Young measures and Chacon's biting lemma.
In Section \ref{s:fem_approx} we define the finite element
approximation of the problem with both discretely divergence-free and
exactly divergence-free finite element spaces for the velocity.  A key
technical tool in our analysis is a new \textit{discrete Lipschitz truncation}
technique, which can be seen as the finite element counterpart of the
Lipschitz truncation of Sobolev functions discovered by Acerbi and
Fusco \cite{AcerbiFusco:88} and further refined by Diening, M{\'a}lek, and
Steinhauer \cite{DieningMalekSteinhauer:08}; see also
\cite{DieHarHasRuz:11,BreDieFuc:12,BreDieSch:12}. The central result of the paper
is stated in Section \ref{s:main}, in Theorem \ref{t:main}, and
concerns the convergence of the finite element approximations
constructed in Section \ref{s:fem_approx}.

\section{Preliminaries}
\label{sec:preliminaries}

In this section we recall some known results and mathematical tools
from the literature.  We shall first introduce basic notations and
recall some well-known properties of Lebesgue and Sobolev
spaces.  We shall then discuss the approximation of an $x$-dependent
$r$-graph by a sequence of regular single-valued tensor fields using a graph-mollification
technique by Francfort, Murat and Tartar \cite{FraGilMur:04}
(which the authors of \cite{FraGilMur:04} attribute to Dal Maso).
We close the section
by recalling a generalization from
\cite{Gwiazda:05,GwiazdaZatorska:07} of the so-called fundamental
theorem on Young measures; cf. \cite{Ball:89}.

\subsection{Analytical framework}
\label{s:inf-sup}

Let $C(\bar\Omega)^d$ be the space of $d$-component vector-valued continuous functions on
$\bar\Omega$ and let $C_0(\Rdds)$ denote the space of continuous functions with
compact support in $\Rdds$.
For a measurable subset $\omega\subset\R^d$, we denote the classical
spaces of Lebesgue and vector-valued Sobolev functions by
$(L^s(\omega),\norm[s;\omega]{\cdot})$ and
$(W^{1,s}(\omega)^d,\norm[1,s;\omega]{\cdot})$, $s\in[1,\infty]$,
respectively.  Let $\mathcal{D}(\omega)\definedas
C^\infty_0(\omega)^d$  be the set
of infinitely many times differentiable $d$-component
vector-valued functions with
compact support in $\omega$; we denote by $\mathcal{D}'(\omega)$ the corresponding dual
space, consisting of distributions on $\omega$.
For $s\in[1,\infty)$, denote by $W^{1,s}_0(\omega)^d$ the closure
of $\mathcal{D}(\omega)$ in $W^{1,s}(\omega)^d$ and let
$W^{1,s}_{0,\divo}(\omega)^d\definedas \{\vecv\in
W^{1,s}_0(\omega)^d\colon \divo\vecv\equiv0\}$. The case $s=\infty$
has to be treated differently. We define $$W^{1,\infty}_0(\Omega)^d :=
W^{1,1}_0(\Omega)^d \cap W^{1,\infty}(\Omega)^d$$ and
$$W^{1,\infty}_{0,\divo}(\Omega)^d := W^{1,1}_{0,\divo}(\Omega)^d \cap
W^{1,\infty}(\Omega)^d.$$  Moreover, we denote the space of functions in
$L^s(\omega)$ with zero integral mean by $L^s_0(\omega)$. For
$s,s'\in(1,\infty)$ with $\frac1s+\frac1{s'}=1$,
$L^{s'}(\Omega)$ and $L^{s'}_0(\Omega)$ are the dual spaces of
$L^{s}(\Omega)$ and $L^{s}_0(\Omega)$, respectively. The dual of
$W^{1,s}_0(\Omega)^d$ is denoted by $W^{-1,s'}(\Omega)^d$.  For
$\omega=\Omega$ we omit the domain in our notation for norms; \eg, we
write $\norm[s]{\cdot}$ instead of $\norm[s,\Omega]{\cdot}$.

  \paragraph{Inf-sup condition} The inf-sup condition has a crucial role in the analysis of incompressible
  flow problems. It states that, for
  $s,s'\in(1,\infty)$ with  $\frac1s+\frac1{s'}=1$,
  there exists a constant $\alpha_s>0$ such that
  \begin{subequations}
    \begin{align}\label{eq:inf-sup}
      \sup_{0\neq \vecv\in W^{1,s}_0(\Omega)^d}\frac{\scp{\divo
          \vecv}{q}}{\norm[1,s]{\vecv}}\geq \alpha_s
      \norm[s']{q}\qquad\text{for all}~q\in L^{s'}_0(\Omega).
    \end{align}
    \new{This follows from the existence of the {\em
        Bogovski\u\i} operator $\mathfrak{B}:L^s_0(\Omega)\to
      W^{1,s}_0(\Omega)$, with}
    \begin{align}\label{eq:Bog}
      \new{\divo \mathfrak{B}h=h
        \qquad\text{and}\qquad\alpha_s\norm[{1,s}]{\mathfrak{B}h}\le
        \norm[s]{h}}
    \end{align}
  \end{subequations}
  \new{for all $s\in(1,\infty)$; compare e.g. with \cite{DieSchuRu:10,Bogovskii:79}.}

  \paragraph{Korn's inequality} According to \eqref{eq:implicit} the
  maximal monotone graph defined in \ref{A1}--\ref{A5} provides
  control over the symmetric part of the velocity gradient only. Korn's inequality implies
  that this suffices in order to control the norm of a Sobolev
  function; \ie for $s\in(1,\infty)$, there exists a $\gamma_s>0$ such that
  \begin{align}
    \label{eq:korn}
    \gamma_s \norm[1,s]{\vecv}\le\norm[s]{\nablas \vecv}\qquad
    \text{for all}~\vecv\in W^{1,s}_0(\Omega)^d;
  \end{align}
  compare, for example, with \cite{DieSchuRu:10}.

\subsection{Approximation of maximal monotone $r$-graphs}
\label{ss:approxAx}

In general an $x$-depen\-dent maximal monotone $r$-graph $\Ax$
satisfying~\ref{A1}--\ref{A5} cannot be represented in an explicit
fashion. However, based on a regularized {\em measurable selection}, it can be
approximated by a regular single-valued monotone tensor field.
Following \cite{FraGilMur:04}, there exists a mapping
$\bsS^*:\Omega\times\Rdds\to \Rdds$ (a selection) such that, for all
$\vec{\delta}\in\Rdds$,
$(\vec{\delta},\bsS^*(x,\vec{\delta}))\in\Ax(x)$ for almost every
$x\in\Omega$ and
\begin{enumerate}[leftmargin=1cm,itemsep=1ex,label=(a\arabic{*})]
\item \label{a1}$\bsS^*$ is measurable with respect to the product
  $\sigma$-algebra
  $\mathfrak{L}(\Omega)\otimes\mathfrak{B}(\Rdd_{\text{sym}})$;
\item \label{a2} For almost all $x\in\Omega$ the domain of $\bsS^*$ is
  $\Rdds$;
\item \label{a3}$\bsS^*$ is monotone, \ie for every
  $\vec{\delta}_1,\vec{\delta}_2\in\Rdds$ and almost all $x\in\Omega$,
  \begin{align}
    \label{eq:S*monotone}
    \left(\bsS^*(x,\vec{\delta}_1)-\bsS^*(x,\vec{\delta}_2)\right)
    :\left(\vec{\delta}_1-\vec{\delta}_2\right)\ge0;
  \end{align}
  \item \label{a4} For almost all $x\in\Omega$ and all
    $\vec{\delta}\in\Rdds$ the following
    growth and coercivity conditions hold:
    \begin{align}
      \label{eq:S*coerc}
      \abs{\bsS^*(x,\vec{\delta})}\le c_1
      \abs{\vec{\delta}}^{r-1}+k(x)\quad\text{and}\quad\bsS^*(x,\vec{\delta}):\vec{\delta}
      \ge c_2 \abs{\vec{\delta}}^{r}-m(x),
    \end{align}
    where $c_1,c_2>0$, and $k\in L^{r'}(\Omega)$ and $m\in L^1(\Omega)$
    are nonnegative functions.
\end{enumerate}

Let $\eta\in C_0(\Rdds)$
be a radially symmetric nonnegative function with support in the unit
ball $B_1(\boldsymbol{0})\subset \Rdds$ and $\int_{\Rdds}\eta\,\d \vec{\zeta}=1$. For
$n\in\N$ we then set $\eta^n(\vec{\zeta})=n^{d^2}\eta(n\vec{\zeta})$
and define
\begin{align}\label{eq:S^n}
  \begin{split}
    \bSn(x,\vec{\delta})&\definedas (\bsS^*\convolution
    \eta^n)(x,\vec{\delta}) =\int_\Rdds
    \bsS^*(x,\vec{\zeta})\,\eta^n(\vec{\delta}-\vec{\zeta})\,\d \vec\zeta
    =\int_\Rdds \bsS^*(x,\vec{\zeta})\,\d \mu^n_{\vec{\delta}}.
  \end{split}
\end{align}
Here, thanks to the equality $\int_\Rdds\eta^n\,\d \vec{\zeta}=1$ and the nonnegativity of $\eta$,
for each $\vec{\delta}\in\Rdds$,
$\d\mu_{\vec{\delta}}^n : = \eta^n(\vec{\delta}-\vec{\zeta})\,\d \vec\zeta$ defines a
probability measure that is absolutely continuous with respect to the Lebesgue
measure, with density $\eta^n(\vec{\delta}-(\cdot))$.

We recall the
following properties of the matrix function $\bSn$ from
\cite{GwiaMalSwier:07,BulGwiMalSwi:09,GwiazdaZatorska:07}.

\begin{lem}\label{l:Sn}
  The $x$-dependent matrix function $\bSn$, defined in~\eqref{eq:S^n}, satisfies
  \begin{align*}
    \left(\bSn(x,\vec{\delta}_1)-\bSn(x,\vec{\delta}_2)\right):
    \left(\vec{\delta}_1-\vec{\delta}_2\right) \geq0 \quad\text{for
      all }\vec{\delta}_1,\vec{\delta}_2\in\Rdds.
  \end{align*}
  Moreover, there exist constants $\tilde c_1,\tilde c_2>0$ and
  nonnegative functions $\tilde m\in L^{1}(\Omega)$, $\tilde k\in
  L^{r'}(\Omega)$ such that, uniformly in $n\in\N$, we have
  \begin{alignat*}{2}
    \abs{\bSn(x,\vec{\delta})}&\le \tilde c_1
    \abs{\vec{\delta}}^{r-1}+\tilde k(x)&\qquad &\text{for all }
    \vec{\delta}\in\Rdds,
    \\
    \bSn(x,\vec{\delta}):\vec{\delta}&\ge \tilde c_2
    \abs{\vec{\delta}}^{r}-\tilde m(x)&\qquad &\text{for all }
    \vec{\delta}\in\Rdds.
  \end{alignat*}
\end{lem}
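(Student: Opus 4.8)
The plan is to derive all three claimed properties of $\bSn$ directly from the corresponding properties \ref{a3}--\ref{a4} of the selection $\bsS^*$, using the fact that $\bSn(x,\cdot)$ is obtained from $\bsS^*(x,\cdot)$ by convolution against the probability measure $\mu^n_{\vec\delta}$ on $\Rdds$. The key observation underpinning everything is the identity
\begin{align*}
  \bSn(x,\vec\delta) = \int_\Rdds \bsS^*(x,\vec\delta-\vec\zeta)\,\eta^n(\vec\zeta)\,\d\vec\zeta,
\end{align*}
obtained from \eqref{eq:S^n} by the substitution $\vec\zeta\mapsto\vec\delta-\vec\zeta$, so that $\bSn(x,\vec\delta)$ is an average of the values $\bsS^*(x,\vec\delta-\vec\zeta)$ over $\vec\zeta$ in the unit ball, with weights independent of $\vec\delta$.

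For monotonicity I would argue as follows. Fix $\vec\delta_1,\vec\delta_2\in\Rdds$ and write $\bSn(x,\vec\delta_i)=\int \bsS^*(x,\vec\delta_i-\vec\zeta)\eta^n(\vec\zeta)\,\d\vec\zeta$. Then
\begin{align*}
  \bigl(\bSn(x,\vec\delta_1)-\bSn(x,\vec\delta_2)\bigr):(\vec\delta_1-\vec\delta_2)
  = \int_\Rdds \bigl(\bsS^*(x,\vec\delta_1-\vec\zeta)-\bsS^*(x,\vec\delta_2-\vec\zeta)\bigr):\bigl((\vec\delta_1-\vec\zeta)-(\vec\delta_2-\vec\zeta)\bigr)\,\eta^n(\vec\zeta)\,\d\vec\zeta,
\end{align*}
since $\vec\delta_1-\vec\delta_2=(\vec\delta_1-\vec\zeta)-(\vec\delta_2-\vec\zeta)$; the integrand is nonnegative by \eqref{eq:S*monotone} applied with the pair $\vec\delta_1-\vec\zeta$, $\vec\delta_2-\vec\zeta$, and $\eta^n\ge0$, so the integral is $\ge0$. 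For the growth bound, I would apply the first estimate in \eqref{eq:S*coerc} pointwise under the integral, use $\abs{\vec\delta-\vec\zeta}^{r-1}\le C_r(\abs{\vec\delta}^{r-1}+\abs{\vec\zeta}^{r-1})$ together with $\abs{\vec\zeta}\le1/n\le1$ on the support of $\eta^n$, and absorb the resulting constant and the $k(x)$ term; this yields $\abs{\bSn(x,\vec\delta)}\le\tilde c_1\abs{\vec\delta}^{r-1}+\tilde k(x)$ with $\tilde k\definedas C_r(1+k)\in L^{r'}(\Omega)$ and $\tilde c_1$ independent of $n$ (the constant only gets better as $n\to\infty$, so uniformity is automatic).

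The coercivity estimate is the only step requiring a small trick, and I expect it to be the main (mild) obstacle. Starting from $\bSn(x,\vec\delta):\vec\delta$, I would write $\vec\delta=(\vec\delta-\vec\zeta)+\vec\zeta$ to get
\begin{align*}
  \bSn(x,\vec\delta):\vec\delta
  = \int_\Rdds \bsS^*(x,\vec\delta-\vec\zeta):(\vec\delta-\vec\zeta)\,\eta^n(\vec\zeta)\,\d\vec\zeta
  + \int_\Rdds \bsS^*(x,\vec\delta-\vec\zeta):\vec\zeta\,\eta^n(\vec\zeta)\,\d\vec\zeta.
\end{align*}
The first term is bounded below by $c_2\int\abs{\vec\delta-\vec\zeta}^r\eta^n\,\d\vec\zeta - m(x)$ using the second estimate in \eqref{eq:S*coerc}; since $\abs{\vec\zeta}\le1/n$, the map $\vec\zeta\mapsto\abs{\vec\delta-\vec\zeta}^r$ is comparable to $\abs{\vec\delta}^r$ up to lower-order terms, giving $c_2\int\abs{\vec\delta-\vec\zeta}^r\eta^n\,\d\vec\zeta\ge c_2'\abs{\vec\delta}^r - c_2''$ for $n$-independent constants (e.g. $\abs{\vec\delta-\vec\zeta}^r\ge 2^{1-r}\abs{\vec\delta}^r-\abs{\vec\zeta}^r$). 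The second term is controlled by $\abs{\vec\zeta}\le1/n$ and the already-established growth bound on $\bsS^*(x,\vec\delta-\vec\zeta)$: it is bounded in absolute value by $\frac1n\int(c_1\abs{\vec\delta-\vec\zeta}^{r-1}+k(x))\eta^n\,\d\vec\zeta\le C(\abs{\vec\delta}^{r-1}+k(x))$, and then Young's inequality $\abs{\vec\delta}^{r-1}\le\eps\abs{\vec\delta}^r+C_\eps$ lets me absorb this into $\tfrac{c_2'}{2}\abs{\vec\delta}^r$ for $\eps$ small, at the cost of an additive term in $L^1(\Omega)$. Collecting everything gives $\bSn(x,\vec\delta):\vec\delta\ge\tilde c_2\abs{\vec\delta}^r-\tilde m(x)$ with $\tilde c_2>0$ and $\tilde m\definedas m + C(1+k)\in L^1(\Omega)$, all constants independent of $n$. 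This completes the proof. $\Box$
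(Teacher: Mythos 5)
Your proof is correct. Note that the paper does not actually prove Lemma~\ref{l:Sn}: it simply cites \cite{GwiaMalSwier:07,BulGwiMalSwi:09,GwiazdaZatorska:07} for these properties of $\bSn$. Your argument supplies the proof the paper omits, via the natural route: after the change of variables $\vec{\zeta}\mapsto\vec{\delta}-\vec{\zeta}$ one has $\bSn(x,\vec{\delta})=\int_{\Rdds}\bsS^*(x,\vec{\delta}-\vec{\zeta})\,\eta^n(\vec{\zeta})\,\d\vec{\zeta}$, so $\bSn(x,\cdot)$ is an average of translates of $\bsS^*(x,\cdot)$, and the monotonicity, growth and coercivity of $\bsS^*$ pass to this average with $n$-independent constants because $\abs{\vec{\zeta}}\leq 1/n\leq 1$ on the support of $\eta^n$. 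The only step with genuine content, which you handle correctly, is the coercivity: the split $\vec{\delta}=(\vec{\delta}-\vec{\zeta})+\vec{\zeta}$, the elementary bound $\abs{\vec{\delta}-\vec{\zeta}}^r\geq 2^{1-r}\abs{\vec{\delta}}^r-\abs{\vec{\zeta}}^r$, the growth estimate applied to the perturbation term, and Young's inequality together absorb the lower-order $\abs{\vec{\delta}}^{r-1}$ contribution into the leading $\abs{\vec{\delta}}^r$ term at the price of an additive function $\tilde m = m + C(1+k)$, which lies in $L^1(\Omega)$ since $\Omega$ is bounded and $k\in L^{r'}(\Omega)\subset L^1(\Omega)$.
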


\begin{rem}
  The selection $\bsS^\ast$ enters in the definition of the finite element
  method in the form of $\bSn$ through the Galerkin ansatz; compare
  with Section \ref{ss:Galerkin} below. The natural question is then
  how one can gain access to such a selection. In fact, in most
  physical models it appears that the selection $\bsS^\ast$ is given and
  $\Ax(x)$ is defined as the maximal monotone graph containing the
  set $\{(\bsD,\bsS^*(\bsD))\colon\bsD\in\Rdds\}$; compare with
  \cite{BulGwiMalSwi:09,GwiaMalSwier:07} and the references therein.
\end{rem}

\subsection{Weak convergence tools\label{ss:YoungMeasure}}
The result in \cite{Gwiazda:05,GwiazdaZatorska:07} extends
\!\cite{Ball:89}
 from limits of single distributed measures to
limits of general probability measures.
To this end we need to
introduce some standard notation from
the theory of Young measures. We denote by $\Mspace$ the space of
bounded Radon measures. We call $\mu:\Omega\to\Mspace$, $x\mapsto
\mu_x$, weak-$\ast$ measurable if the mapping
$x\mapsto \int_{\Rdds}h(\vec{\zeta})\,\d\mu_x(\vec{\zeta})$ is
measurable for all $h\in C_0(\Rdds)$. The associated nonnegative measure
is defined by  $\abs{\mu_x}(\mathcal{C})\definedas \mu_x^++\mu_x^-$,
via the Jordan decomposition $\mu_x=\mu^+_x-\mu^-_x$ into two bounded non-negative
measures $\mu^+_x,\mu^-_x$.
By means of the norm
$\norm[L^\infty_w(\Omega;\Mspace)]{\mu}\definedas \operatorname{ess\,
  sup}_{x\in\Omega}\int_\Rdds \d\abs{\mu_x}$ the space
$L^\infty_w(\Omega;\Mspace)$ of essentially bounded,
weak-$\ast$ measurable functions turns into a Banach space with separable
predual $L^1(\Omega,C_0(\Rdds))$.
The support of a non-negative measure is defined to be the largest
closed subset of $\Rdds$ for which every
open neighborhood of every point of the set has positive measure
and $\supp\mu_x:=\supp\mu_x^+\cup\supp\mu_x^-$.

 \begin{thm}[Young measures]\label{t:young_meas}
   Let $\Omega$ be an open and bounded subset of $\Rd$. Suppose that
   $\{\nu^j\}_{j\in\N}\subset L^\infty_w(\Omega;\Mspace)$ is such that
   $\nu_x^j$ is a probability measure on $\Rdds$ for all $j\in\N$ and
   a.e. $x\in\Omega$. Assume that $\nu^j$ converges to $\nu$ in the
   weak-$\ast$ topology of $L^\infty_w(\Omega;\Mspace)$ for some
   $\nu\in L^\infty_w(\Omega;\Mspace)$.

   Suppose further that the sequence $\{\nu^j\}_{j\in\N}$ satisfies the tightness
   condition
   \begin{align*}
     \lim_{R\to\infty} \sup_{j\in\N}\abs{\left\{x\in\Omega\colon \supp
       \nu_x^j\setminus B_R(\boldsymbol{0})\neq \emptyset\right\}}\to0,
   \end{align*}
   where $B_R(\boldsymbol{0})$ denotes the ball in $\Rdds$ with center $\boldsymbol{0}\in\Rdds$ and
   radius $R>0$.

   Then, the following statements hold:
   \begin{enumerate}[leftmargin=1cm,itemsep=1ex,label=(\roman{*})]
   \item \label{i}$\nu_x$ is a probability measure,
     \ie $\norm[\Mspace]{\nu_x}=\int_{\Rdds}{\rm d}\abs{\nu_x}=1$ a.e. in
     $\Omega$;
   \item \label{ii}for every $h\in L^\infty(\Omega;C_b(\Rdds))$,
     \begin{align*}
       \int_{\Rdds}h(x,\vec{\zeta})\,\d \nu_x^j(\vec{\zeta})\weak^\ast
       \int_{\Rdds}h(x,\vec{\zeta})\,\d \nu_x(\vec{\zeta})
       \qquad\text{weak-$\ast$ in}~L^\infty(\Omega);
     \end{align*}
   \item \label{iii}for every measurable subset $\omega\subset\Omega$ and for
     every Carath{\'e}odory function $h$ such that
     \begin{align}\label{eq:carath}
       \lim_{R\to\infty} \sup_{j\in\N}
       \int_\omega\int_{\left\{\vec{\zeta}\in\Rdds\colon
         \abs{h(x,\vec{\zeta})}>R\right\}}\abs{h(x,\vec{\zeta})}\,\d \nu^j_x(\vec{\zeta})\,\d \vec{\zeta} =0
     \end{align}
     we have that
     \begin{align*}
       \int_{\Rdds}h(x,\vec{\zeta})\,\d \nu_x^j(\vec{\zeta})\weak
       \int_{\Rdds}h(x,\vec{\zeta})\,\d\nu_x(\vec{\zeta})
       \qquad\text{weakly in}~L^1(\omega).
     \end{align*}
   \end{enumerate}

 \end{thm}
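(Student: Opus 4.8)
The plan is to adapt the classical proof of the fundamental theorem on Young measures (cf.\ \cite{Ball:89}), the only genuinely new feature being that the $\nu^j_x$ are arbitrary probability measures rather than Dirac masses, so that every limit passage has to be performed in the weak-$\ast$ topology of $L^\infty_w(\Omega;\Mspace)$ tested against its separable predual $L^1(\Omega;C_0(\Rdds))$. I would first dispose of the elementary parts. Testing against integrands $(x,\vec\zeta)\mapsto\psi(x)\varphi(\vec\zeta)$ with $0\le\psi\in L^1(\Omega)$ and $0\le\varphi\in C_0(\Rdds)$, which lie in the predual, and letting $\varphi$ run through a countable dense subset of the non-negative cone of $C_0(\Rdds)$, shows that $\nu_x\ge0$ for a.e.\ $x$; weak-$\ast$ lower semicontinuity of the norm together with $\|\nu^j\|_{L^\infty_w(\Omega;\Mspace)}=1$ then gives $\nu_x(\Rdds)\le1$ a.e. For the reverse mass bound, i.e.\ for statement \ref{i}, I would use tightness: for $R>0$ fix a cut-off $\chi_R\in C_0(\Rdds)$ with $\mathbbm{1}_{B_R(\boldsymbol{0})}\le\chi_R\le1$, chosen increasing in $R$; on the set where $\supp\nu^j_x\subset B_R(\boldsymbol{0})$ one has $\int\chi_R\,\d\nu^j_x=1$, so $\int_\Omega\int\chi_R\,\d\nu^j_x\,\d x\ge|\Omega|-|\{x:\supp\nu^j_x\setminus B_R(\boldsymbol{0})\neq\emptyset\}|$. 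Passing to the limit $j\to\infty$ (legitimate since $\mathbbm{1}_\Omega\otimes\chi_R$ is an admissible test integrand) and invoking the tightness hypothesis to make the defect term small yields $\int_\Omega\int\chi_R\,\d\nu_x\,\d x\ge|\Omega|-o(1)$ as $R\to\infty$; since $\int\chi_R\,\d\nu_x\uparrow\nu_x(\Rdds)\le1$, the monotone convergence theorem forces $\nu_x(\Rdds)=1$ a.e., which is \ref{i}.

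For \ref{ii}, given $h\in L^\infty(\Omega;C_b(\Rdds))$ and $\phi\in L^1(\Omega)$, I would split $h=h\chi_R+h(1-\chi_R)$. The first piece is harmless: $(x,\vec\zeta)\mapsto\phi(x)h(x,\vec\zeta)\chi_R(\vec\zeta)$ is, via the Scorza--Dragoni theorem, a genuine element of $L^1(\Omega;C_0(\Rdds))$, so $\int_\Omega\phi\int h\chi_R\,\d\nu^j_x\,\d x\to\int_\Omega\phi\int h\chi_R\,\d\nu_x\,\d x$ by weak-$\ast$ convergence. For the second piece, $\nu^j_x$ is supported in $B_R(\boldsymbol{0})$ off the set $E^j_R:=\{x:\supp\nu^j_x\setminus B_R(\boldsymbol{0})\neq\emptyset\}$, whence $|\int_\Omega\phi\int h(1-\chi_R)\,\d\nu^j_x\,\d x|\le\|h\|_{L^\infty(\Omega;C_b)}\int_{E^j_R}|\phi|\,\d x$, which tends to $0$ as $R\to\infty$ uniformly in $j$, by tightness together with the absolute continuity of the integral; on the limit side $\int h\chi_R\,\d\nu_x\to\int h\,\d\nu_x$ as $R\to\infty$ by dominated convergence, since $\nu_x$ is a probability measure. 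Sending first $j\to\infty$ and then $R\to\infty$ proves \ref{ii}.

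For \ref{iii}, let $\omega\subset\Omega$ be measurable (hence of finite measure) and let $h$ be a Carath\'eodory function satisfying \eqref{eq:carath}. I would truncate: set $h_R:=\max(-R,\min(R,h))$, which is again Carath\'eodory and now bounded, so $\mathbbm{1}_\omega h_R\in L^\infty(\Omega;C_b(\Rdds))$ and \ref{ii} gives $\int h_R\,\d\nu^j_x\weak^\ast\int h_R\,\d\nu_x$ in $L^\infty(\omega)$, hence weakly in $L^1(\omega)$. Since $|h-h_R|=(|h|-R)^+\le|h|\,\mathbbm{1}_{\{|h|>R\}}$, the hypothesis \eqref{eq:carath} shows $\sigma_R:=\sup_j\int_\omega\int_{\Rdds}|h-h_R|\,\d\nu^j_x\,\d x\to0$ as $R\to\infty$; truncating $(|h|-R)^+$ at a level $M$, applying \ref{ii}, and letting $M\to\infty$ by monotone convergence gives $\int_\omega\int_{\Rdds}|h-h_R|\,\d\nu_x\,\d x\le\sigma_R$ as well, and in particular $x\mapsto\int h\,\d\nu_x$ lies in $L^1(\omega)$. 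For $\phi\in L^\infty(\omega)$, the quantity $\int_\omega\phi\int h\,\d\nu^j_x\,\d x$ then differs from $\int_\omega\phi\int h_R\,\d\nu^j_x\,\d x$ by at most $\|\phi\|_{L^\infty(\omega)}\sigma_R$ uniformly in $j$, and similarly on the limit side; combining this with the $j\to\infty$ convergence of the $h_R$-integrals from \ref{ii} and then letting $R\to\infty$ yields $\int_\omega\phi\int h\,\d\nu^j_x\,\d x\to\int_\omega\phi\int h\,\d\nu_x\,\d x$, which is the claimed weak $L^1(\omega)$ convergence.

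The part I expect to be delicate is not any individual estimate but making the pairings legitimate: the measures $\nu^j$ and $\nu$ act only on $L^1(\Omega;C_0(\Rdds))$, whereas the integrands in \ref{ii}--\ref{iii} are merely Carath\'eodory, only measurable in $x$, and (in \ref{iii}) unbounded in $\vec\zeta$. Handling this requires the interplay of three ingredients: the cut-offs $\chi_R$ together with the tightness hypothesis to tame the behavior as $|\vec\zeta|\to\infty$; the Scorza--Dragoni theorem to reconcile Carath\'eodory integrands with the predual; and the truncation-plus-equi-integrability argument in \ref{iii}, whose crux is transferring the equi-integrability encoded in \eqref{eq:carath} from the sequence $\{\nu^j_x\}$ to the limit $\nu_x$ by one further application of \ref{ii}.
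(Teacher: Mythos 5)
The paper does not prove Theorem~\ref{t:young_meas}: it is recalled as a known result from \cite{Gwiazda:05,GwiazdaZatorska:07}, which extend Ball's fundamental theorem on Young measures \cite{Ball:89} from sequences of Dirac masses to sequences of arbitrary probability measures. There is therefore no in-paper proof to compare your attempt against.

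That said, your proposal is a correct self-contained proof and has the architecture one would expect in those references: establish unit mass in~\ref{i} by combining weak-$\ast$ lower semicontinuity of the $L^\infty_w(\Omega;\Mspace)$-norm with the tightness condition through a cut-off $\chi_R$; prove~\ref{ii} by writing $h = h\chi_R + h(1-\chi_R)$, pushing the first piece into the predual $L^1(\Omega;C_0(\Rdds))$ and controlling the second on the tightness-defect set $E^j_R$; and prove~\ref{iii} by truncating to $h_R$, applying~\ref{ii} to the bounded truncation, and transferring the equi-integrability encoded in~\eqref{eq:carath} from $\{\nu^j_x\}$ to the limit $\nu_x$ via one further application of~\ref{ii} and monotone convergence. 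Two small remarks. First, in~\ref{i}, the passage from the integral inequality $\int_\Omega \nu_x(\Rdds)\,\dx \ge \abs{\Omega}$ together with $\nu_x(\Rdds)\le 1$ a.e.\ to the pointwise conclusion $\nu_x(\Rdds)=1$ a.e.\ deserves the one extra line that the non-negative function $1-\nu_x(\Rdds)$ has non-positive integral, hence vanishes a.e. Second, the appeal to Scorza--Dragoni in~\ref{ii} is a convenience rather than a necessity: since $C_0(\Rdds)$ is separable and $h(x,\cdot)\chi_R\in C_0(\Rdds)$ with $x\mapsto h(x,\vec\zeta)\chi_R(\vec\zeta)$ measurable for each $\vec\zeta$, Pettis' measurability theorem already yields Bochner measurability of $x\mapsto h(x,\cdot)\chi_R$, and multiplication by $\phi\in L^1(\Omega)$ then gives $\phi\,h\,\chi_R\in L^1(\Omega;C_0(\Rdds))$ directly. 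Neither remark affects the substance; the argument is sound.
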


 \begin{lem}[Chacon's biting lemma]\label{l:biting}
   Let $\Omega$ be a bounded domain in $\R^d$ and let
   $\{v^n\}_{n\in\N}$ be a bounded sequence in $L^1(\Omega)$. Then,
   there exists a nonincreasing sequence of measurable subsets
   $E_j\subset\Omega$ with $\abs{E_j}\to0$ as $j\to\infty$, such that $\{v^n\}_{n\in\N}$ is
   precompact in the weak topology of $L^1(\Omega\setminus E_j)$, for each $j\in\N$.
 \end{lem}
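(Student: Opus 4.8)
The plan is to reduce the assertion to a uniform-integrability statement and then establish the latter by the classical truncation-and-diagonalisation argument. First, by the Dunford--Pettis theorem, on a finite measure space a bounded subset of $L^1$ is relatively weakly compact if and only if it is uniformly integrable. Since $\{v^n\}_{n\in\N}$ is bounded in $L^1(\Omega)$, it therefore suffices to extract a subsequence (not relabelled) and to produce a nonincreasing family of measurable sets $E_j\subset\Omega$ with $\abs{E_j}\to0$ such that this subsequence is uniformly integrable on each $\Omega\setminus E_j$. Decomposing into positive and negative parts I may assume $v^n\ge0$; write $M\definedas\sup_n\norm[1]{v^n}$.

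Second, I would truncate. For $m\in\N$ the functions $v^n\wedge m\definedas\min\{v^n,m\}$ are bounded by $m$, hence uniformly integrable, hence relatively weakly compact in $L^1(\Omega)$; a diagonal argument over $m$ yields a subsequence along which $v^n\wedge m\weak\phi_m$ weakly in $L^1(\Omega)$ for every $m$, and $\norm[1]{v^n}\to L\in[0,M]$. Testing against the constant function $1\in L^\infty(\Omega)$ gives $\norm[1]{v^n\wedge m}\to\norm[1]{\phi_m}$, so $\norm[1]{(v^n-m)^+}=\norm[1]{v^n}-\norm[1]{v^n\wedge m}\to L-\norm[1]{\phi_m}\asdefined\beta_m$. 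By monotonicity in $m$ one has $\phi_m\uparrow\phi\definedas\sup_m\phi_m\in L^1(\Omega)$ with $\norm[1]{\phi_m}\to\norm[1]{\phi}$, and $\beta_m\downarrow\gamma\definedas L-\norm[1]{\phi}\ge0$; the number $\gamma$ is the mass lost to concentration. If $\gamma=0$, then the sequence is already uniformly integrable on $\Omega$ --- use the elementary bound $\int_{\{v^n>m\}}v^n\,\dx\le 2\,\norm[1]{(v^n-m/2)^+}$ together with the fact that any finite set of $L^1$-functions is uniformly integrable --- and one takes $E_j=\emptyset$.

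The interesting case is $\gamma>0$. The key point is that for large $m$ the excess $(v^n-m)^+$ carries $L^1$-mass close to $\gamma$ but is supported in $\{v^n>m\}$, whose Lebesgue measure is at most $M/m$ and vanishes as $m\to\infty$: the defect is ``infinitely concentrated''. I would then select, by a further diagonal procedure, a subsequence $\{v^{n_k}\}$ and truncation levels $m_k\uparrow\infty$ such that $\abs{\{v^{n_k}>m_k\}}\le 2^{-k}$ while, simultaneously, the ``bulk'' sequence $\{v^{n_k}\wedge m_k\}_k$ is uniformly integrable; this is feasible because for each fixed $n$ both $\abs{\{v^n>m\}}$ and $\int_{\{v^n>m\}}v^n\,\dx$ tend to $0$ as $m\to\infty$, so the $m_k$ can be kept below the ``concentration scale'' of $v^{n_k}$. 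Then $E_j\definedas\bigcup_{k\ge j}\{v^{n_k}>m_k\}$ is nonincreasing with $\abs{E_j}\le 2^{-j+1}\to0$; since $v^{n_k}=v^{n_k}\wedge m_k$ on $\Omega\setminus E_j$ for all $k\ge j$, the restrictions $v^{n_k}|_{\Omega\setminus E_j}$ are dominated by the uniformly integrable sequence $\{v^{n_k}\wedge m_k\}_k$ (plus finitely many fixed $L^1$-functions), hence uniformly integrable, and Dunford--Pettis yields the asserted relative weak compactness.

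I expect the main obstacle to be exactly this last selection in the case $\gamma>0$: if $m_k$ is chosen too large, residual concentration survives in the bulk $v^{n_k}\wedge m_k$; if it is too small, the sets $\{v^{n_k}>m_k\}$ are too big to be absorbed into a family of sets of vanishing measure. Balancing these opposing demands forces one to thin the subsequence $\{n_k\}$ in tandem with the choice of the $m_k$ --- which is also the reason the conclusion is intrinsically about a subsequence and not the full sequence. (A variant that extracts a subsequence so that $v^n\,\dx$ converges weak-$\ast$ to a bounded Radon measure on $\overline{\Omega}$ and bites out neighbourhoods of the ``non-diffuse'' part of that limit meets the same difficulty, because the concentration need not appear as a part of that limit which is singular with respect to Lebesgue measure.)
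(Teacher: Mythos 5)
The paper does not prove this lemma: it is recalled as a classical result (it follows \cite{Gwiazda:05,GwiazdaZatorska:07} and \cite{Ball:89}, and the standard references are Brooks--Chacon and Ball--Murat). So there is no ``paper's proof'' to compare against; I will evaluate your argument on its own.

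Your outline is a correct and essentially standard route (truncation $v^n\wedge m$, diagonal extraction, $\phi_m\uparrow\phi$, concentration defect $\gamma=L-\|\phi\|_1$, case split, carve out exceptional sets of vanishing measure). The $\gamma=0$ case is handled cleanly. Your final observation --- that the conclusion is intrinsically about a subsequence --- is correct and worth emphasizing: as stated in the paper, for the full sequence, the claim is in fact slightly too strong (take $v^n=|I_n|^{-1}\chi_{I_n}$ with $\{I_n\}$ an enumeration of all dyadic subintervals of $(0,1)$; by Lebesgue density no set $E$ of measure $<1$ can make $\{v^n|_{\Omega\setminus E}\}$ uniformly integrable), so the paper's statement should be read modulo passing to a subsequence.

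The one step you flag as potentially delicate --- choosing $n_k$ and $m_k$ in the case $\gamma>0$ --- is indeed the crux, and your heuristic (``keep $m_k$ below the concentration scale of $v^{n_k}$'') needs to be made precise; as written, it is not clear which direction it constrains $m_k$. The criterion that actually makes the construction go through is: pick $m_k\uparrow\infty$ with $m_k\ge M2^k$ and $\beta_{m_k}\le\gamma+1/k$, and then $n_k$ large enough that $\bigl|\|(v^{n_k}-m_k)^+\|_1-\beta_{m_k}\bigr|\le 1/k$; this forces $\|(v^{n_k}-m_k)^+\|_1\to\gamma$, i.e.\ the removed tail captures \emph{exactly} the concentration mass. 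Granted this, one does not even need to verify uniform integrability of $\{v^{n_k}\wedge m_k\}_k$ by hand: since $\|v^{n_k}\wedge m_k\|_1\to L-\gamma=\|\phi\|_1$ and, for each fixed $m$ and nonnegative $g\in L^\infty$, $\liminf_k\int (v^{n_k}\wedge m_k)g\ge\lim_k\int(v^{n_k}\wedge m)g=\int\phi_m g$, one concludes (testing also with $\|g\|_\infty-g$) that $v^{n_k}\wedge m_k\rightharpoonup\phi$ weakly in $L^1(\Omega)$, and a weakly convergent sequence is relatively weakly compact, hence uniformly integrable by Dunford--Pettis. Your domination argument on $\Omega\setminus E_j$ then closes the proof as you describe. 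So the proposal is correct; it would benefit from replacing the verbal ``concentration scale'' criterion by the quantitative condition $\|(v^{n_k}-m_k)^+\|_1\to\gamma$, after which the uniform integrability of the bulk drops out by the weak-limit identification rather than by the truncation-level bookkeeping you were anticipating.
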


\section{Finite Element Approximation}\label{s:fem_approx}

This section is concerned with approximating
problem~\eqref{eq:implicit} by a finite element method. To this end we
introduce a general framework covering inf-sup stable
Stokes elements, which are discretely divergence-free, as well as exactly
divergence-free finite elements for the velocity. These two classes of velocity
elements require different treatment of the convection term.  The discussion of
these, including representative examples of velocity-pressure pairs
of finite element spaces from each class, is the subject of
\sectionref{ss:fem} and \sectionref{ss:divfree-fem}.  The finite
element approximation of~\eqref{eq:implicit} is stated
in~\sectionref{ss:Galerkin}.
We close with a new Lipschitz
truncation method for finite element spaces, which plays a crucial
role in the proof of our main result, Theorem~\ref{t:main}.

\subsection{Finite element spaces}\label{ss:fem_spaces}
We consider a family $\{\Vn,\Qn\}_{n\in\N}\subset
W^{1,\infty}_0(\Omega)^{d}$ $\times L^{\infty}(\Omega)$ of pairs of
conforming finite-dimensional subspaces of
$W^{1,r}_0(\Omega)^{d}\times L^{\infty}(\Omega)$.  To be more precise,
let $\mathbb{G}:=\{\gridn\}_{n\in\N}$ be a sequence of shape-regular partitions of
$\bar\Omega$, \ie a sequence of regular finite element partitions of $\bar\Omega$
satisfying the following structural assumptions.
\begin{itemize}[leftmargin=0.5cm]
\item {\em Affine equivalence:} For every element $\elm\in\gridn$,
  $n\in\N$, there exists an invertible affine mapping
  \begin{align*}
    \boldsymbol{F}_\elm:\elm\to \hat \elm,
  \end{align*}
  where $\hat\elm$ is the closed standard reference $d$-simplex or the
  closed standard unit cube in~$\R^d$.
\item {\em Shape-regularity:} For any element $\elm\in\gridn$,
  $n\in\N$, the ratio of its diameter to the diameter of the largest
  inscribed ball is bounded, uniformly with respect to all partitions
  $\gridn$, $n\in\N$.
\end{itemize}
For a given partition $\gridn$, $n\in\N$, and certain subspaces
$\V\subset C(\bar\Omega)^d$ and $\Q\subset
L^\infty(\Omega)$, the finite element
spaces are then given by
\begin{subequations}\label{df:VQ}
  \begin{align}
    \Vn=\V(\gridn)&\definedas \left\{\vecV\in\V~\colon~
      \vecV_{|\elm}\circ\boldsymbol{F}_\elm^{-1}\in\hat{\mathbb{P}}_\V,
      ~\elm \in\gridn~\text{and}~
      \vecV_{|\partial\Omega}=0\right\},\label{df:Vn}
    \\
    \Qn=\Q(\gridn)&\definedas \left\{Q\in \Q~\colon
      ~Q_{|\elm}\circ\boldsymbol{F}_\elm^{-1}\in
      \hat{\mathbb{P}}_\Q,~\elm\in\gridn\right\},
    \label{df:Qn}
  \end{align}
\end{subequations}
where $\hat{\mathbb{P}}_\V\subset W^{1,\infty}(\hat\elm)^d$ and
$\hat{\mathbb{P}}_\Q\subset L^\infty(\hat\elm)$ are finite-dimensional
subspaces, with $\dim \hat{\mathbb{P}}_\V=\ell$ and $\dim
\hat{\mathbb{P}}_\V=\jmath$, respectively, for some
$\ell,\jmath\in\N$.  Note that $\Qn\subset L^\infty(\Omega)$ and since
$\Vn\subset C(\bar\Omega)^d$ it follows that $\Vn\subset
W^{1,\infty}_0(\Omega)^d$.  Each of the above spaces is assumed to have
a finite and locally supported basis; e.\,g. for the discrete pressure
space this means that for $n\in\N$ there exists $N_n\in\N$ such that
\begin{align*}
  \Qn=\operatorname{span}\{Q_1^n,\ldots,Q^n_{N_n}\}
\end{align*}
and for each basis function $Q_i^n$, $i=1,\ldots,N_n$, we have that if
there exists $\elm\in\gridn$ with $Q_i^n\not\equiv 0$
 on $\elm$, then
\begin{align*}
  \supp{Q_j^n}\subset
  \bigcup\left\{\elm'\in\gridn\mid \elm' \cap
    \elm\neq\emptyset\right\}\asdefined
  \Omega^n_\elm\new{\quad\text{with}\quad \Omega^n_\elm\le c\abs{\elm}}
\end{align*}
\new{for some constant $c>0$ depending on the shape-regularity of
  $\mathbb{G}$.
  The piecewise constant mesh size function $h_{\gridn}\in
L^\infty(\Omega)$ is almost everywhere in $\Omega$ defined by}
\begin{align*}
  \new{h_{\gridn}(x):=\abs{\elm}^{\frac 1d},\quad\text{if $\elm\in\gridn$ with $x\in\elm.$}}
\end{align*}
We introduce the subspace
$\Vndiv$ of  discretely divergence-free functions by
\begin{align*}
  \Vndiv\definedas \big\{\vecV\in\Vn\colon
  \scp{\divo\vecV}{Q}=0~\text{for all}~Q\in\Qn\big\}
\end{align*}
and we define
\begin{align*}
  \Qn_0\definedas \big\{Q\in\Qn:\int_\Omega Q\,\d x=0\big\}.
\end{align*}
Throughout the paper we assume that all pairs of velocity-pressure
finite element spaces possess the
following properties.
\begin{ass}[Approximability]\label{ass:dense}
  For all $s\in [1,\infty)$,
  \begin{align*}
      \inf_{\vecV\in\Vn} \| \vecv- \vecV \|_{1,s} \to 0& \qquad \text{for
    all}~\vecv\in W^{1,s}_0(\Omega)^{d},~\text{as $n\to\infty$; ~and} \\
       \inf_{Q\in \Qn} \| q- Q\|_{s} \to 0& \qquad \text{for
    all}~q\in L^{s}(\Omega),~\text{as $n\to\infty$.}
  \end{align*}
  \new{For this, a necessary condition is that the maximal  mesh size
    vanishes, i.e. we have $\norm[L^\infty(\Omega)]{h_{\gridn}}\to 0$ as $n\to\infty$.}
\end{ass}
\begin{ass}[Projector $\Pndiv$]\label{ass:Pndiv}
 For each $n\in\N$ there exists a linear projection
  operator $\Pndiv:W^{1,1}_0(\Omega)^{d}\to\Vn$ such that,
  \begin{itemize}[leftmargin=0.5cm]
  \item $\Pndiv$ preserves divergence in the dual of ${\Qn}$; \ie for any $\vecv\in
    W^{1,1}_0(\Omega)^{d}$, we have
    \begin{align*}
      \scp{\divo \vecv}{Q}&=\scp{\divo\Pndiv \vecv}{Q}\qquad\text{for
        all}~Q\in\Qn.
    \end{align*}
  \item $\Pndiv$ is locally $W^{1,1}$-stable; \ie there exists
    $c_1>0$, independent of $n$, such that
    \begin{align}\label{eq:stability}
      \fint_\elm \new{\abs{\Pndiv \vecv}}+h_{\gridn} \abs{\nabla\Pndiv \vecv}\,\d x\le c_1
      \fint_{\Omega^n_\elm}\new{\abs{\vecv}}+h_{\gridn}\abs{\nabla \vecv}\,\d x \!\!
    \end{align}
    for all $\vecv\in W^{1,1}_0(\Omega)^{d}$ and all       $\elm\in\gridn$.
    Here we have used the notation $\fint_B \cdot\,\d x \definedas
    \frac1{\abs{B}}\int_B\cdot\,\d x$ for the integral mean-value
    over a measurable set $B\subset \Rd$, $\abs{B}\neq0$.
  \end{itemize}
\end{ass}
It was shown in~\cite{BelBerDieRu:10,DieningRuzicka:07b} that the
local $W^{1,1}$-stability of~$\Pndiv$ implies its local and global
$W^{1,s}$-stability, $s \in [1,\infty]$.
In fact, by noting that the power function $t\mapsto t^s$ is convex for
$s\in[1,\infty)$, we obtain \new{for almost every $x\in\elm$, $\elm\in\gridn$,} by the equivalence of norms on
finite-dimensional spaces and standard scaling arguments, that
\begin{align*}
  \new{\abs{\Pndiv \vecv(x)}+h_{\gridn}}\abs{\nabla\Pndiv \vecv(x)} &\le \new{\norm[L^\infty(\elm)]{\Pndiv \vecv(x)}+h_{\gridn}}\norm[L^\infty(\elm)]{\nabla\Pndiv
    \vecv}
  \\
  &\le c\fint_\elm \new{\abs{\Pndiv \vecv(x)}+h_{\gridn}}\abs{\nabla\Pndiv \vecv}\,\d x
  \\
  &
  \le c \fint_{\Omega^n_\elm}\new{\abs{\vecv(x)}+h_{\gridn}}\abs{\nabla \vecv}\,\d x
  \\
  &\le
  c\bigg(\fint_{\Omega^n_\elm}\new{\abs{\vecv(x)}^s+h_{\gridn}^s}\abs{\nabla \vecv}^s\,\d
  x\bigg)^\frac{1}{s},
\end{align*}
where we have used Jensen's inequality in the last step; recall
that $\abs{\Omega^n_\elm}\le c\abs{\elm}$ with a constant
depending solely on the shape-regularity of $\mathbb{G}$.
Raising this inequality to the $s$-th power and integrating over $\elm$ yields
\begin{align*}
  \int_\elm \new{\abs{\Pndiv \vecv}^s+h_{\gridn}^s}\abs{\nabla\Pndiv \vecv}^s\,\d x
  \le c\int_{\Omega^n_\elm}\new{\abs{\vecv}^s+h_{\gridn}^s}\abs{\nabla
    \vecv}^s\,\d x.
\end{align*}
Summing
over all elements $\elm\in\gridn$ and accounting for the locally finite
overlap of patches 
yields, for any $s \in
    [1,\infty)$, that
\begin{align}\label{eq:W1s-stab}
  \norm[1,s]{\Pndiv \vecv}\le c_s \norm[1,s]{ \vecv}\qquad \text{for
    all}~\vecv\in W^{1,s}_0(\Omega)^{d},
\end{align}
with a constant $c_s>0$ independent of $n\in\N$. Note that for $s=\infty$
the inequality \eqref{eq:W1s-stab} follows from an obvious modification of the argument above.


Hence, by invoking the approximation properties of the sequence of finite element spaces
for the velocity, stated in Assumption \ref{ass:dense}, we obtain that
\begin{align}\label{eq:approxVn}
 \norm[1,s]{\vecv-\Pndiv \vecv}\rightarrow 0\qquad \text{for
   all}~\vecv\in W^{1,s}_0(\Omega)^{d},~\text{as $n\to\infty$ and
   $s\in [1,\infty)$}.
\end{align}
\new{Moreover, we have the following result in the weak topology of $W^{1,s}_0(\Omega)^{d}$.}
\begin{prop}\label{lem:Pnweak}
  \new{Let $\{\vecv_n\}_{n\in\N}\subset W^{1,s}_0(\Omega)^{d}$,
  $s\in(1,\infty)$,
  such that $\vecv_n\weak \vecv$ weakly in $W^{1,s}_0(\Omega)^{d}$ as
  $n\to\infty$. Then}
\begin{align*}
  \new{\Pndiv\vecv_n\weak \vecv\quad\text{weakly in}~W^{1,s}_0(\Omega)^{d}~\text{as
  $n\to\infty$}.}
\end{align*}
\end{prop}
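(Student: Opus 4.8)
The plan is to exploit the two facts already established about $\Pndiv$: the uniform $W^{1,s}$-stability \eqref{eq:W1s-stab} and the strong convergence \eqref{eq:approxVn} on a dense subset of $W^{1,s}_0(\Omega)^d$. The standard "uniform boundedness plus convergence on a dense set" argument for weak convergence of a sequence of operators does not immediately apply because here both the operator $\Pndiv$ and the argument $\vecv_n$ vary with $n$; so the argument needs one extra $\varepsilon/3$-type splitting. Concretely, I would argue as follows.

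First, since $\vecv_n\weak\vecv$ in the reflexive Banach space $W^{1,s}_0(\Omega)^d$, the sequence $\{\vecv_n\}_{n\in\N}$ is bounded in $W^{1,s}_0(\Omega)^d$, say by a constant $M$. By the uniform stability \eqref{eq:W1s-stab}, the sequence $\{\Pndiv\vecv_n\}_{n\in\N}$ is then bounded in $W^{1,s}_0(\Omega)^d$ by $c_s M$. Hence it suffices to identify the weak limit: by reflexivity every subsequence of $\{\Pndiv\vecv_n\}_{n\in\N}$ has a further subsequence converging weakly to some limit, and it is enough to show that limit is always $\vecv$, i.e. that
\begin{align*}
  \dual{F}{\Pndiv\vecv_n}\longrightarrow \dual{F}{\vecv}\qquad\text{for all }F\in W^{-1,s'}(\Omega)^d.
\end{align*}
Write $\dual{F}{\Pndiv\vecv_n}-\dual{F}{\vecv} = \dual{F}{\Pndiv(\vecv_n-\vecv)} + \dual{F}{\Pndiv\vecv - \vecv}$. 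The second term tends to $0$ by \eqref{eq:approxVn} applied to the fixed function $\vecv$. For the first term, fix $\eps>0$ and choose $\vecw\in\mathcal{D}(\Omega)$ (or any fixed smooth function) with $\norm[1,s]{\vecv-\vecw}\le\eps$, which is possible by density. Then split
\begin{align*}
  \dual{F}{\Pndiv(\vecv_n-\vecv)} = \dual{F}{\Pndiv(\vecv_n-\vecw)} + \dual{F}{\Pndiv(\vecw-\vecv)}.
\end{align*}
The last term is bounded by $\norm[W^{-1,s'}]{F}\,c_s\,\norm[1,s]{\vecw-\vecv}\le c_s\eps\norm[W^{-1,s'}]{F}$ by \eqref{eq:W1s-stab}. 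For the middle term, the key observation is that $\Pndiv$ is a fixed linear operator for each $n$ and $\vecv_n-\vecw\weak \vecv-\vecw$ weakly; but we cannot pass $\dual{F}{\Pndiv\cdot}$ through the weak limit directly because $\Pndiv$ depends on $n$. Instead I would use the adjoint: $\dual{F}{\Pndiv(\vecv_n-\vecw)} = \dual{(\Pndiv)^*F}{\vecv_n-\vecw}$, and observe that by \eqref{eq:W1s-stab} the adjoints $(\Pndiv)^*$ are uniformly bounded on $W^{-1,s'}(\Omega)^d$; combining this with the weak convergence $\vecv_n-\vecw\weak\vecv-\vecw$ and the strong convergence $(\Pndiv)^*F\to F$ in $W^{-1,s'}(\Omega)^d$ (which is the dual statement of \eqref{eq:approxVn}, valid since $s'\in(1,\infty)$) lets one conclude
\begin{align*}
  \dual{(\Pndiv)^*F}{\vecv_n-\vecw} \longrightarrow \dual{F}{\vecv-\vecw},
\end{align*}
whose absolute value is $\le\norm[W^{-1,s'}]{F}\,\eps$ in the limit. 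Collecting the three contributions and letting $\eps\to0$ gives $\dual{F}{\Pndiv(\vecv_n-\vecv)}\to0$, which completes the argument.

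The one step that requires a little care — and is the main obstacle — is the claim that $(\Pndiv)^*F\to F$ strongly in $W^{-1,s'}(\Omega)^d$. This is \emph{not} the same as \eqref{eq:approxVn}, which is strong convergence of $\Pndiv$ in the strong operator topology on $W^{1,s}_0(\Omega)^d$; convergence of the adjoints in the strong operator topology of the dual is an additional fact. It does hold here because one also has the analogous approximation/stability statement with $s$ replaced by $s'$: indeed $\Pndiv$ is uniformly $W^{1,s'}$-stable and satisfies $\norm[1,s']{\vecv-\Pndiv\vecv}\to0$ for all $\vecv\in W^{1,s'}_0(\Omega)^d$ by the very same argument (the estimate \eqref{eq:W1s-stab} was proved for every $s\in[1,\infty)$). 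Then for fixed $F\in W^{-1,s'}(\Omega)^d$ and any $\vecz\in W^{1,s}_0(\Omega)^d$ with $\norm[1,s]{\vecz}\le1$,
\begin{align*}
  \dual{(\Pndiv)^*F-F}{\vecz} = \dual{F}{\Pndiv\vecz-\vecz},
\end{align*}
and one bounds this uniformly in $\vecz$ by a density argument identical to the one above (approximate $F$ — or rather test against a dense family — using that $W^{1,s'}_0(\Omega)^d$ is dense in an appropriate sense); alternatively, since this is exactly the "uniformly bounded $+$ pointwise convergent on a dense set $\Rightarrow$ norm convergence of adjoints on compacts" phenomenon and the unit ball of $W^{1,s}_0(\Omega)^d$ need not be compact, the cleanest route is to absorb this step into the $\eps/3$ splitting above rather than prove it as a standalone lemma. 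In the write-up I would therefore organize the proof around the single two-parameter $\eps/3$ estimate: a "good" part where $\vecv_n-\vecw$ is tested against the \emph{fixed} functional $F$ after moving $\Pndiv$ onto $F$, controlled by weak convergence plus $\norm[1,s']{(\Pndiv)^*F-F}\to0$; a "tail" part controlled by $\norm[1,s]{\vecv-\vecw}\le\eps$ and uniform stability; and the fixed-function remainder controlled by \eqref{eq:approxVn}.
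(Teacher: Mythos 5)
Your plan runs into a genuine gap at the step you yourself flag as the ``main obstacle,'' and the fix you sketch does not actually close it. You need $(\Pndiv)^*F \to F$ strongly in $W^{-1,s'}(\Omega)^d$ for each fixed $F$, i.e.
$\sup_{\norm[1,s]{\vecz}\le 1}\,\abs{\dual{F}{\Pndiv\vecz-\vecz}}\to 0$.
You argue this ``holds because one also has the analogous approximation/stability statement with $s$ replaced by $s'$,'' but that statement --- $\norm[1,s']{\vecw-\Pndiv\vecw}\to 0$ for each fixed $\vecw\in W^{1,s'}_0(\Omega)^d$ --- is not dual to the assertion you need. The operator $(\Pndiv)^*$ on $W^{-1,s'}(\Omega)^d$ is the adjoint with respect to the $W^{1,s}_0$--$W^{-1,s'}$ pairing, not the $W^{1,s'}_0$--$W^{-1,s}$ pairing; pointwise (strong-operator) convergence of $\Pndiv$ on $W^{1,s'}_0$ says nothing about uniform convergence of $\Pndiv\vecz-\vecz$ over the $W^{1,s}_0$ unit ball. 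The ingredient that would actually make your adjoint route work is a quantitative rate of the form $\norm[L^s]{\Pndiv\vecz-\vecz}\le c\,\norm[L^\infty]{h_{\gridn}}\norm[1,s]{\vecz}$, uniform in $\vecz$ (which then yields $(\Pndiv)^*F\to F$ for $F\in L^{s'}$ and then all $F$ by density and uniform boundedness of $(\Pndiv)^*$). That estimate is provable from the projection property and \eqref{eq:stability} via a Bramble--Hilbert-type argument, but you neither state nor prove it, and without it your $\eps/3$ splitting remains circular: the middle term $\dual{F}{\Pndiv(\vecv_n-\vecw)}$ contains the very quantity $\dual{F}{\Pndiv\vecv_n}$ whose limit you are trying to identify.

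The paper avoids all of this by not working in duality at all. After extracting a weakly convergent subsequence from the bounded family $\{\Pndiv\vecv_n\}_{n}$ in $W^{1,s}_0(\Omega)^d$, it observes that one only needs to identify the limit in the weaker space $L^s(\Omega)^d$. There the local estimate \eqref{eq:stability} gives, after summing over elements, $\norm[L^s]{\Pndiv\vecw}\le c\big(\norm[L^s]{\vecw}+\norm[L^s]{h_{\gridn}\nabla\vecw}\big)$, so that
$\norm[L^s]{\vecv-\Pndiv\vecv_n}\le \norm[L^s]{\vecv-\Pndiv\vecv}+c\,\norm[L^s]{\vecv_n-\vecv}+c\,\norm[L^s]{h_{\gridn}\nabla(\vecv_n-\vecv)}$; the first term vanishes by \eqref{eq:approxVn}, the second by the compact embedding $W^{1,s}_0\hookrightarrow\hookrightarrow L^s$, and the third because $\norm[L^\infty]{h_{\gridn}}\to0$ while $\nabla(\vecv_n-\vecv)$ stays bounded in $L^s$. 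This is both shorter and entirely elementary, and it is the natural argument given the specific structure of \eqref{eq:stability}: the extra $h_{\gridn}\abs{\nabla\cdot}$ term in the local bound is precisely what lets one trade one derivative for a power of the mesh size. I would recommend replacing the adjoint construction by this $L^s$-identification argument.
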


\begin{proof}
  \new{Thanks to the uniform boundedness of the sequence of linear operators $\{\Pndiv\,:\, W^{1,s}_0(\Omega)^{d} \rightarrow \mathbb{V}^n \subset W^{1,s}_0(\Omega)^{d}\}_{n\in\N}$ (cf. \eqref{eq:W1s-stab}), we have
  that there exists a weakly converging subsequence of
  $\{\Pndiv\vecv_n\}_{n\in\N}$ in
  $W^{1,s}_0(\Omega)^{d}$. By the uniqueness of the weak limit, it therefore
  suffices to identify the limit of $\{\Pndiv\vecv_n\}_{n\in\N}$  in $L^s(\Omega)^{d}$.
  We deduce from the above
  considerations that }
  \begin{align*}
    \new{\norm[L^s(\Omega)]{\vecv -\Pndiv\vecv_n}}&\new{\le
      \norm[L^s(\Omega)]{\vecv
        -\Pndiv\vecv}+\norm[L^s(\Omega)]{\Pndiv(\vecv_n -\vecv)}}
    \\
    &\new{\le \norm[L^s(\Omega)]{\vecv
        -\Pndiv\vecv} + c \norm[L^s(\Omega)]{\vecv_n-\vecv}+c\norm[L^s(\Omega)]{h_{\gridn}\nabla(
    \vecv_n-\vecv)}}.
  \end{align*}
  \new{The first term vanishes because of~\eqref{eq:approxVn} and the second
  term vanishes since $\vecv_n\to\vecv$
  strongly in $L^s(\Omega)^{d}$, thanks to the compact embedding
  $W^{1,s}_0(\Omega)^{d}\hookrightarrow \hookrightarrow L^s(\Omega)^{d}$.
  The last term vanishes since
  $\norm[L^\infty(\Omega)]{h_{\gridn}}\to0$ as $n\to\infty$,
  by Assumption~\ref{ass:dense}. }
 \end{proof}

\begin{ass}[Projector $\PnQ$]\label{ass:PQ}
 For each $n\in\N$ there exists a linear projection
 operator $\PnQ:L^1(\Omega)\to\Qn$ such that, for all
 $s'\in(1,\infty)$,
 $\PnQ$ is stable. In other words, there exists a constant $\tilde c_{s'}>0$,
   independent of $n$, such that
   \begin{align*}
     \norm[s']{\PnQ q} \leq \tilde c_{s'} \norm[s']{q} \qquad \text{for
       all}~q\in L^{s'}(\Omega).
   \end{align*}
\end{ass}

The stability of $\PnQ$ and the approximation properties of $\Qn \subset L^{s'}(\Omega)$, stated in Assumption~\ref{ass:dense}, imply that $\PnQ$ satisfies
\begin{align}\label{eq:approxQn}
 \norm[s']{q-\PnQ q}\rightarrow 0,\qquad \text{as
   $n\to\infty$}\quad\text{for all}~q\in
 L^{s'}(\Omega) \text{ and } s' \in (1,\infty).
\end{align}
%

As a consequence of~\eqref{eq:inf-sup} and Assumption~\ref{ass:Pndiv}
(compare also with \eqref{eq:W1s-stab}) the following discrete
counterpart of~\eqref{eq:inf-sup} holds; see \cite{BelBerDieRu:10}.
\begin{prop}[Inf-sup stability]\label{p:Dinf-sup}For all
  $s,s'\in(1,\infty)$ with $\frac1s+\frac1{s'}=1$, there exists
  a constant $\beta_{s}>0$,  independent of $n$, such that
  \begin{align*}
    \sup_{0\neq \vecV\in\Vn}\frac{\scp{\divo
        \vecV}{Q}}{\norm[1,s]{\vecV}}\geq \beta_s \,
    \norm[s']{Q}\qquad\text{for all $Q\in \Qn_0$ and all $n\in\N$}.
  \end{align*}
\end{prop}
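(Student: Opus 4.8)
The plan is to run the classical Fortin argument: transfer the continuous inf-sup condition \eqref{eq:inf-sup} to the discrete spaces by composing with the divergence-preserving, uniformly stable projector $\Pndiv$ supplied by Assumption~\ref{ass:Pndiv}. Fix $n\in\N$ and $Q\in\Qn_0$. If $Q=0$ the claimed inequality is trivial, so assume $\norm[s']{Q}>0$. Since $\Qn_0\subset L^{s'}_0(\Omega)$, the continuous inf-sup condition \eqref{eq:inf-sup} applies with $q=Q$, and because its right-hand side $\alpha_s\norm[s']{Q}$ is a (strictly positive) lower bound for the supremum, we may choose $\vecv\in W^{1,s}_0(\Omega)^{d}$, normalised so that $\norm[1,s]{\vecv}=1$, with
\begin{align*}
  \scp{\divo\vecv}{Q}\ge\tfrac12\,\alpha_s\,\norm[s']{Q}>0.
\end{align*}

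Next I would set $\vecV\definedas\Pndiv\vecv\in\Vn$, which is legitimate since $\vecv\in W^{1,s}_0(\Omega)^{d}\subset W^{1,1}_0(\Omega)^{d}$ on the bounded domain $\Omega$. The first property in Assumption~\ref{ass:Pndiv}, applied with the admissible test function $Q\in\Qn$, gives
\begin{align*}
  \scp{\divo\vecV}{Q}=\scp{\divo\Pndiv\vecv}{Q}=\scp{\divo\vecv}{Q}\ge\tfrac12\,\alpha_s\,\norm[s']{Q}>0,
\end{align*}
so in particular $\vecV\neq0$ and the discrete Rayleigh quotient below is well defined. By the $W^{1,s}$-stability \eqref{eq:W1s-stab} of $\Pndiv$ we have $\norm[1,s]{\vecV}\le c_s\norm[1,s]{\vecv}=c_s$, whence
\begin{align*}
  \sup_{0\neq\vecW\in\Vn}\frac{\scp{\divo\vecW}{Q}}{\norm[1,s]{\vecW}}
  \ge\frac{\scp{\divo\vecV}{Q}}{\norm[1,s]{\vecV}}
  \ge\frac{\alpha_s}{2\,c_s}\,\norm[s']{Q}.
\end{align*}
This is the assertion with $\beta_s\definedas\alpha_s/(2c_s)$; the key point is that $c_s$, and hence $\beta_s$, is independent of $n$, since the constant in \eqref{eq:W1s-stab} depends only on the shape-regularity of $\mathbb{G}$ and on $s$.

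I do not expect a genuine obstacle here. The only two points needing a word of care are: (i) one must dispose of the trivial case $Q=0$ before invoking \eqref{eq:inf-sup}, so as to guarantee that the near-maximiser $\vecv$ — and therefore $\vecV=\Pndiv\vecv$ — is nonzero; and (ii) the harmless factor $\tfrac12$ only reflects that \eqref{eq:inf-sup} is stated as a supremum that need not be attained. One could instead take $\vecv=\mathfrak{B}Q$ with $\divo\vecv=Q$ from \eqref{eq:Bog}, but then the numerator becomes $\norm[2]{Q}^2$, whose ratio to $\norm[s]{Q}\,\norm[s']{Q}$ is not bounded below uniformly unless $s=2$, so the near-maximiser route is the clean one. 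All the substantive work has already been carried out in establishing \eqref{eq:inf-sup} and the stability estimate \eqref{eq:W1s-stab}.
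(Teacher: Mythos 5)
Your proof is correct and is exactly the Fortin-type argument the paper has in mind: it states that Proposition~\ref{p:Dinf-sup} follows from \eqref{eq:inf-sup} and Assumption~\ref{ass:Pndiv}/\eqref{eq:W1s-stab} and defers to \cite{BelBerDieRu:10}, which runs the same composition of a near-maximiser for the continuous inf-sup with the divergence-preserving, uniformly $W^{1,s}$-stable projector $\Pndiv$. The only cosmetic remark is that the factor $\tfrac12$ can be removed by letting the near-maximisation tolerance tend to zero (or by observing that the supremum over the unit ball of the reflexive space $W^{1,s}_0(\Omega)^d$ is attained), giving $\beta_s=\alpha_s/c_s$; this does not change the substance.
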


\new{Thanks to the above considerations,
  there is a {\em discrete Bogovski\u\i} operator,
  which admits  the following properties.
}
\begin{cor}[Discrete Bogovski{\u\i} operator]\label{c:Dbogovskii}
  Under the conditions of this section, \new{for all $n\in\N$, there
    exists a linear $\Bogn:\divo\Vn\to\Vn$ with}
  \begin{align*}
    \divo (\Bogn H)=H\qquad\text{and}\qquad \beta_s\,\norm[1,s]{\Bogn
      H}\leq \sup_{Q\in\Qn}\frac{\scp{H}{Q}}{\norm[s']{Q}}
  \end{align*}
  for all $H\in \divo\Vn$. \new{Moreover, if $\vecV^n\in\Vn$,
  $n\in\N$, such that $\vecV^n\weak \vecV$
  weakly in $W^{1,s}_0(\Omega)^d$ as $n\to\infty$, then we have that
  \begin{align*}
    \Bogn\divo\vecV^n \weak \mathfrak{B}\divo\vecV\quad\text{weakly in}~W^{1,s}_0(\Omega)^{d}~\text{as $n\to\infty$.}
  \end{align*}
  }
\end{cor}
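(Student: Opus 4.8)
The plan is to build the discrete operator directly from the continuous Bogovski\u\i{} operator $\mathfrak{B}$ of~\eqref{eq:Bog} and the Fortin-type projector $\Pndiv$ of Assumption~\ref{ass:Pndiv}, by setting $\Bogn H\definedas\Pndiv(\mathfrak{B}H)$ for $H\in\divo\Vn$. Every $H\in\divo\Vn$ has zero integral mean (integrate by parts, using that functions in $\Vn$ vanish on $\partial\Omega$), so $H\in L^s_0(\Omega)$, hence $\mathfrak{B}H\in W^{1,s}_0(\Omega)^d\subset W^{1,1}_0(\Omega)^d$ and $\Pndiv\mathfrak{B}H\in\Vn$; thus $\Bogn$ is a well-defined linear map $\divo\Vn\to\Vn$.

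Next I would verify the right-inverse identity using the divergence-preservation of $\Pndiv$ and $\divo\mathfrak{B}H=H$: for every $Q\in\Qn$,
\[
  \scp{\divo(\Bogn H)}{Q}=\scp{\divo\Pndiv\mathfrak{B}H}{Q}=\scp{\divo\mathfrak{B}H}{Q}=\scp{H}{Q},
\]
so $\divo(\Bogn H)-H\in\divo\Vn$ is $L^2(\Omega)$-orthogonal to $\Qn$; since $\divo\Vn\subseteq\Qn$ for the finite element pairs considered here, this difference is orthogonal to itself and hence vanishes, giving $\divo(\Bogn H)=H$. For the norm estimate I would chain the $W^{1,s}$-stability~\eqref{eq:W1s-stab} of $\Pndiv$ with the continuous bound~\eqref{eq:Bog} to get $\norm[1,s]{\Bogn H}\le c_s\,\norm[1,s]{\mathfrak{B}H}\le c_s\alpha_s^{-1}\norm[s]{H}$, and then note that for $H\in\divo\Vn\subseteq\Qn$ the full norm $\norm[s]{H}$ is, uniformly in $n$, controlled by the discrete negative norm appearing on the right-hand side of the assertion: taking $\PnQ$ in Assumption~\ref{ass:PQ} to be the $L^2(\Omega)$-orthogonal projection onto $\Qn$, one has $\scp{H}{q}=\scp{H}{\PnQ q}$ for all $q\in L^{s'}(\Omega)$, whence, by $L^{s'}$-stability of $\PnQ$,
\[
  \norm[s]{H}=\sup_{0\neq q\in L^{s'}(\Omega)}\frac{\scp{H}{q}}{\norm[s']{q}}
  =\sup_{0\neq q\in L^{s'}(\Omega)}\frac{\scp{H}{\PnQ q}}{\norm[s']{q}}
  \le\tilde c_{s'}\sup_{0\neq Q\in\Qn}\frac{\scp{H}{Q}}{\norm[s']{Q}}.
\]
Renaming the product of constants as $\beta_s^{-1}$ then gives the claimed bound; equivalently, this part of the statement is the discrete Bogovski\u\i{} operator of~\cite{BelBerDieRu:10} together with the discrete inf-sup condition of Proposition~\ref{p:Dinf-sup}.

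For the second, genuinely new, assertion the point is that the construction $\Bogn=\Pndiv\circ\mathfrak{B}$ couples $\Bogn$ to $\mathfrak{B}$ closely enough to invoke Proposition~\ref{lem:Pnweak} directly. If $\vecV^n\weak\vecV$ weakly in $W^{1,s}_0(\Omega)^d$, then $\nabla\vecV^n\weak\nabla\vecV$ weakly in $L^s(\Omega)^{d\times d}$, hence $\divo\vecV^n\weak\divo\vecV$ weakly in $L^s_0(\Omega)$. Since $\mathfrak{B}\colon L^s_0(\Omega)\to W^{1,s}_0(\Omega)^d$ is bounded and linear, it is weak-to-weak continuous, so $\mathfrak{B}\divo\vecV^n\weak\mathfrak{B}\divo\vecV$ weakly in $W^{1,s}_0(\Omega)^d$. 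Applying Proposition~\ref{lem:Pnweak} to the weakly convergent sequence $\{\mathfrak{B}\divo\vecV^n\}_{n\in\N}$ yields
\[
  \Bogn\divo\vecV^n=\Pndiv\big(\mathfrak{B}\divo\vecV^n\big)\weak\mathfrak{B}\divo\vecV\qquad\text{weakly in }W^{1,s}_0(\Omega)^d
\]
for the whole sequence, which is the claim.

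The step I expect to require the most attention is the first part: one must resist taking an arbitrary linear right inverse of $\divo|_{\Vn}$ (which would still be mesh-independently bounded, by Proposition~\ref{p:Dinf-sup}) and instead use the $\mathfrak{B}$-compatible construction above, then check that the Fortin property of $\Pndiv$, together with $\divo\Vn\subseteq\Qn$, really restores exact solvability of $\divo(\Bogn H)=H$ without spoiling the uniform bound. Once this construction is fixed, the weak-convergence assertion is a soft consequence of Proposition~\ref{lem:Pnweak} and the weak continuity of bounded linear operators.
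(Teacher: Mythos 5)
Your construction $\Bogn H\definedas\Pndiv(\mathfrak{B}H)$ differs from the paper's, and the difference matters: the norm estimate in the corollary is not available under your construction in the generality that is actually needed. You pass through the intermediate quantity $\norm[s]{H}$ and then appeal to $\divo\Vn\subseteq\Qn$ (together with an $L^2$-orthogonal choice of $\PnQ$) to obtain $\norm[s]{H}\leq \tilde c_{s'}\sup_{Q\in\Qn}\scp{H}{Q}/\norm[s']{Q}$. But $\divo\Vn\subseteq\Qn$ is exactly Assumption~\ref{ass:divfree}, which is imposed only in Section~\ref{ss:divfree-fem}. Corollary~\ref{c:Dbogovskii} is stated ``under the conditions of this section'' (Section~\ref{ss:fem_spaces}) and is invoked in the proof of Lemma~\ref{l:bn->0} in both the discretely divergence-free setting of Section~\ref{ss:fem} (Mini, conforming Crouzeix--Raviart, $P_2/P_0$, Taylor--Hood) and the exactly divergence-free one. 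For those pairs one does \emph{not} have $\divo\Vn\subseteq\Qn$, and your chain breaks down precisely because there can exist nonzero $H\in\divo\Vn$ with $\scp{H}{Q}=0$ for all $Q\in\Qn$, for which the right-hand side of the claimed estimate vanishes while $\norm[s]{H}$ and hence $\norm[1,s]{\Pndiv\mathfrak{B}H}$ need not. In short, $\norm[s]{H}$ cannot in general be bounded by the discrete negative norm that appears in the assertion.

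The paper avoids exactly this trap by not applying $\mathfrak{B}$ to $H$ itself, but to a replacement $h_H\in L^s_0(\Omega)$ characterized by $\int_\Omega h_H\,q\,\dx=\int_\Omega H\,\PnQ q\,\dx$ for all $q\in L^{s'}(\Omega)$; the definition, together with the $L^{s'}$-stability of $\PnQ$, gives $\norm[s]{h_H}\leq\tilde c_{s'}\sup_{Q\in\Qn}\scp{H}{Q}/\norm[s']{Q}$ \emph{by construction}, and then $\Bogn H\definedas\Pndiv\mathfrak{B}h_H$ inherits the desired bound from \eqref{eq:Bog} and \eqref{eq:W1s-stab} without ever referring to $\norm[s]{H}$. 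The divergence identity is then read as a statement in the dual of $\Qn_0$ (the paper identifies $\divo\Vn$ with $(\Qn/\R)^*$), so your worry about exact versus discrete divergence is a side issue; the substantive gap is the norm estimate. Your second-step argument for the weak convergence (weak-to-weak continuity of $\mathfrak{B}$ followed by Proposition~\ref{lem:Pnweak}) is correct in spirit and is essentially what the paper does, except that it must be routed through $h_{H^n}$ rather than $H^n=\divo\vecV^n$ directly; the additional ingredient needed there is \eqref{eq:approxQn}, namely $\norm[s']{q-\PnQ q}\to0$, to establish $h_{H^n}\weak \divo\vecV$.
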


It follows from Corollary~\ref{c:Dbogovskii} by H{\"o}lder's inequality
that $\beta_s\,\norm[1,s]{\Bogn H}\leq \norm[s]{H}$. However, we shall
need in the proof of Lemma~\ref{l:bn->0} the stronger statement from
Corollary~\ref{c:Dbogovskii}.

\begin{proof}
  \new{Thanks to the discrete inf-sup stability
  (Proposition~\ref{p:Dinf-sup}), we may identify $\divo\Vn$ with the
  dual of $\Qn/\R$. Next, we extend $H\in\divo\Vn$,
to $h_{H}=L_0^{s}(\Omega)$, $s\in(1,\infty)$,
by means of  the projection
operator $\Pi_\Q^n:L^{s'}(\Omega)\to\Qn$, $\frac1s+\frac1{s'}=1$, from
Assumption~\ref{ass:PQ}. In fact, $h_{H}\in L_0^s(\Omega)$ is
uniquely defined by
\begin{align*}
   \int_\Omega H\,\Pi_\Q^n q\,\dx=\int_\Omega h_{H}
   q\,\dx\quad\text{for all}~q\in L^{s'}(\Omega).
\end{align*}
Moreover, we have
\begin{align*}
  \norm[s]{h_{H}}&=\sup_{q\in L^{s'}(\Omega)}\frac{\int_\Omega
    h_{H} q\,\dx}{\norm[{s'}]{q}}
    =\sup_{q\in L^{s'}(\Omega)}\frac{\int_\Omega
   H  \,\Pi_\Q^n q\,\dx}{\norm[{s'}]{q}}
 \le
 \tilde c_{s'}\sup_{Q\in \Qn} \frac{\int_\Omega
   H   Q\,\dx}{\norm[{s'}]{Q}}.
\end{align*}
We define $\Bogn H :=\Pndiv \mathfrak{B} h_{H}\in\Vn$.
Thanks to the above considerations and the stability properties
\eqref{eq:W1s-stab} and \eqref{eq:Bog}
of $\Pndiv$ and $\mathfrak{B}$ respectively, we have proved the first
claim.

In order to prove the second assertion, we set $H^n:=\divo \vecV^n$
and conclude that $H^n\weak H:=\divo\vecV$ weakly in $L^s_0(\Omega)$
as $n\to\infty$. Consequently, thanks to~\eqref{eq:approxQn}, we have for all $q\in L^{s'}(\Omega)$, that
\begin{align*}
  \int_\Omega h_{H^n}q\,\dx = \int_\Omega H^n\PnQ q\,\dx \to \int_\Omega H q\,\dx\quad\text{as}~n\to\infty.
\end{align*}
In other words, we have that $h_{H^n}\weak H$ weakly in
$L^s_0(\Omega)$ as $n\to\infty$. The Bogovski\u i operator
$\mathfrak{B}:L_0^{s}(\Omega)\to W^{1,s}_0(\Omega)^d$ is continuous and
therefore it is also continuous with respect to
the weak topologies of the respective
spaces; compare e.g. with \cite[Theorem
6.17]{AliprantisBorder:06}. Therefore, we have
$\mathfrak{B}h_{H^n}\weak \mathfrak{B} H$ weakly in $W^{1,s}_0(\Omega)^d$
as $n\to\infty$ and the assertion follows from Proposition~\ref{lem:Pnweak}.}
\end{proof}

\subsection{Discretely divergence-free
  finite elements}
\label{ss:fem}
As in \cite{Temam:84} we wish to ensure that the discrete counterpart
of the convection term inherits the skew-symmetry of the convection
term.  In particular, upon integration by parts, it follows that
\begin{align}\label{eq:skew_sym}
  -\int_\Omega (\vecv\otimes\vecw):\nabla \vec{h} \,\d x = \int_\Omega
  (\vecv\otimes\vec{h}):\nabla\vecw+(\divo\vecv)(\vecw\cdot\vec{h})
  \,\d x
\end{align}
for all $\vecv,\vecw,\vec{h}\in
\mcD(\Omega)^d$. The last term vanishes provided  that $\divo
\vecv\equiv0$, and then
\begin{align*}
  \int_\Omega (\vecv\otimes\vecv):\nabla\vecv \,\d x=0.
\end{align*}
It can be easily seen that this is not generally true for
finite element functions
$\vecV\in\Vn$, even if
\begin{align*}
    \scp{\divo\vecV}{Q} &=0\qquad\text{for all}~Q\in\Qn,
\end{align*}
\ie if $\vec{V}$ is discretely divergence-free.  However, we observe
from~\eqref{eq:skew_sym} that
\begin{align}\label{eq:trilin}
  \begin{split}
    -\int_\Omega (\vecv\otimes\vecw):\nabla\vec{h}
    \,\d x={\textstyle \frac12}\int_\Omega
    (\vecv\otimes\vec{h}):\nabla \vecw
    -(\vecv\otimes\vecw):\nabla \vec{h}
   \,\d x
    \asdefined
      \Trilin{\vecv}{\vecw}{\vec{h}}
  \end{split}
\end{align}
for all $\vecv,\vecw,\vec{h}\in W^{1,\infty}_{0,\divo}(\Omega)^d$.
 We extend this definition to $W^{1,\infty}(\Omega)^d$
in the obvious way and deduce that
\begin{align}
  \label{eq:SKEW_SYM}
  \Trilin{\vecv}{\vecv}{\vecv}=0\qquad\text{for all }\vecv\in
  W^{1,\infty}(\Omega)^d.
\end{align}

We further investigate this modified convection term for fixed
$r,r'\in(1,\infty)$ with $\frac1r+\frac1{r'}=1$; recall the definition
of $\tr$ from \eqref{eq:tr}.  We note that $\tr>1$ is equivalent to
the condition $r>\frac{2d}{d+2}$. In this case we can define its dual
$\tr'\in(1,\infty)$ by $\frac1\tr+\frac1{\tr'}=1$ and we note that the
Sobolev embedding
\begin{align}\label{eq:tr-emb}
  W^{1,r}(\Omega)^d\hookrightarrow L^{2\tr}(\Omega)^d
\end{align}
holds.  This is a crucial condition in the continuous problem, which
guarantees
\begin{align}\label{eq:cont_div0}
  \int_\Omega (\vecv\otimes\vecw):\nabla \vec{h}\,\d x\le c\,
  \norm[1,r]{ \vecv}\norm[1,r]{\vecw}\norm[1,\tr']{\vec{h}}
\end{align}
for all $\vecv,\vecw,\vec{h}\in
W^{1,\infty}(\Omega)^d$;
see~\cite{BulGwiMalSwi:09} and Section~\ref{ss:divfree-fem} below.
Because of the extension \eqref{eq:trilin} of the convection term to
functions that are not necessarily point-wise divergence-free, we have to adopt
the following stronger condition in order to ensure that the trilinear
form $\Tri[\cdot,\cdot,\cdot]$ is bounded on $W^{1,r}(\Omega)^d\times W^{1,r}(\Omega)^d\times
W^{1,\tr'}(\Omega)^d$. In particular,
let $r>\frac{2d}{d+1}$, in order to ensure that there exists
$s\in(1,\infty)$ such that $\frac{1}{r}+\frac{1}{2\tr}+\frac1{s}=1$.
In other words, we have for $\vecv,\vecw,\vec{h}\in
W^{1,\infty}(\Omega)^d$ that
\begin{align*}
  \int_\Omega(\divo\vecv)\,(\vecw\cdot\vec{h})\,\d x\le \norm[r]{\divo
    \vecv}\norm[2\tr]{\vecw}\norm[s]{\vec{h}} \le c\,
  \norm[1,r]{ \vecv}\norm[1,r]{\vecw}\norm[1,\tr']{\vec{h}},
\end{align*}
with a constant $c$ depending on $r$, $\Omega$ and $d$.  Here we have used
the embeddings \eqref{eq:tr-emb} and
$W^{1,\tr'}_0(\Omega)^d\hookrightarrow L^s(\Omega)^d$.
Consequently, together
with \eqref{eq:cont_div0} we thus obtain
\begin{align}
  \label{eq:cont_trilin}
  \Trilin{\vecv}{\vecw}{\vec{h}}\le c\,
  \norm[1,r]{\vecv}\norm[1,r]{\vecw}\norm[1,\tr']{\vec{h}}.
\end{align}

\begin{ex}\label{ex:dfree-fe}
  In \cite{BelBerDieRu:10} it is shown that Assumptions
  \ref{ass:Pndiv} and \ref{ass:PQ} are satisfied by the following
  velocity-pressure pairs of finite elements:
  \begin{itemize}[leftmargin=0.5cm]
  \item The conforming Crouzeix--Raviart Stokes element, \ie continuous
    piecewise quadratic plus bubble velocity and discontinuous
    piecewise linear pressure approximations (compare e.\,g. with
    \cite[\S VI Example 3.6]{BrezziFortin:91});
  \item The Mini element; see,  \cite[\S VI Example
    3.7]{BrezziFortin:91};
  \item The spaces of continuous piecewise quadratic elements for the
    velocity and piecewise constants for the pressure (\cite[\S VI
    Example 3.6]{BrezziFortin:91});
  \end{itemize}
  Moreover, it is stated without proof in \cite{BelBerDieRu:10} that
  the lowest order Taylor--Hood element also satisfies Assumptions
  \ref{ass:Pndiv} and \ref{ass:PQ}.
\end{ex}

\subsection{Exactly divergence-free finite elements}
\label{ss:divfree-fem}

Another way of retaining the skew-symmetry of the convection term and
ensuring that \eqref{eq:SKEW_SYM} holds is to use an exactly
divergence-free finite element approximation of the velocity.  In
addition to Assumptions~\ref{ass:Pndiv} and~\ref{ass:PQ} in
Section~\ref{ss:fem_spaces} we suppose that the following condition
holds.
\begin{ass}\label{ass:divfree}
  The finite element spaces defined in
  Section~\ref{ss:fem_spaces} satisfy
  \begin{align*}
    \divo \Vn \subset \Qn_0,\qquad n\in\N.
  \end{align*}
  This inclusion obviously implies that discretely divergence-free
  functions are automatically exactly divergence-free, \ie
  \begin{align*}
    \Vndiv=\{\vecV\in\Vn\colon \divo\vecV\equiv0\},\qquad n\in\N.
  \end{align*}
\end{ass}
According to \eqref{eq:skew_sym}, in this case, we define
\begin{align}
  \label{eq:trilin2}
  \Trilin{\vecv}{\vecw}{\vec{h}}\definedas -\int_\Omega(\vecv\otimes
  \vecw):\nabla \vec{h}\,\d x
\end{align}
for all $\vecv,\vecw,\vec{h}\in W^{1,\infty}_0(\Omega)^d$ and obtain
\begin{align}
  \label{eq:SKEW_SYM2}
  \Trilin{\vecv}{\vecv}{\vecv}=0 \qquad\text{for all }\vecv\in
  W^{1,\infty}_{0,\divo}(\Omega)^d.
\end{align}
Recalling \eqref{eq:cont_div0}, with Assumption \ref{ass:divfree}, the
convection term can be
controlled under the weaker restriction  $r>\frac{2d}{d+2}$, \ie
for
$\vecv,\vecw,\vec{h}\in W^{1,\infty}(\Omega)^d$, we have that
\begin{align}
  \label{eq:cont_trilin2}
  \Trilin{\vecv}{\vecw}{\vec{h}}\leq c\,
  \norm[1,r]{\vecv}\norm[1,r]{\vecw}\norm[1,\tr']{\vec{h}},
\end{align}
where, as before, $\frac1\tr+\frac1{\tr'}=1$ with $\tr$ from
\eqref{eq:tr}. The constant $c>0$ only depends on $r$, $\Omega$
and $d$.

Admittedly, finite
element spaces that simultaneously  satisfy Assumptions~\ref{ass:Pndiv},~\ref{ass:PQ}
and~\ref{ass:divfree} are not very common. Most constructions of
exactly divergence-free finite element spaces in the literature
 are not very practical in that they require a sufficiently high polynomial
degree and/or restrictions on the geometry
of the mesh; see~\cite{ArnoldQuin:92,ScottVogelius:85,QinZhang:07,Zhang:08}. In a very
recent work~\cite{GuzmanNeilan:2011}
Guzm{\'a}n and Neilan proposed inf-sup stable finite element pairs in two space-dimensions,
which admit exactly divergence-free
velocity approximations for $r=2$. A generalization of the Guzm{\'a}n--Neilan elements to three
dimensions is contained in~\cite{GuzmanNeilan:2012}.
We shall show below that the lowest order spaces introduced in~\cite{GuzmanNeilan:2011}
simultaneously satisfy Assumptions~\ref{ass:Pndiv},~\ref{ass:PQ}
and \ref{ass:divfree} for $r\in[1,\infty)$.

\begin{ex}[Guzm{\'a}n--Neilan elements \cite{GuzmanNeilan:2011}]
  \label{ex:GuzmanNeilan}
  We consider the finite element spaces introduced by Guzm{\'a}n and
  Neilan in \cite[Section 3]{GuzmanNeilan:2011} on simplicial
  triangulations of a bounded open polygonal domain~$\Omega$ in $\mathbb{R}^2$. In particular, we define
  \begin{align*}
    \hat{\mathbb{P}}_\V\definedas \mathbb{P}_1(\hat\elm) ~\oplus~
    \operatorname{span}\{\curl(\hat b_i): i=1,2,3\}
    ~\oplus~
    \operatorname{span}\{\curl(\hat B_i): i=1,2,3\}.
  \end{align*}
  Here $\mathbb{P}_1(\hat\elm)$ denotes the space of affine vector-valued
  functions over $\hat\elm$. Let further $\{\hat\lambda_i\}_{i=1,2,3}$ be the
  barycentric coordinates on $\hat E$ associated with
  the three vertices $\{\hat  z_i\}_{i=1,2,3}$ of $\hat\elm$, \ie
  $\hat\lambda_i(\hat z_j)=\delta_{ij}$. Then, for $i\in\{1,2,3\}$, we set
  $\hat b_i\definedas \hat\lambda_{i+1}^2\hat\lambda_{i+2}$,
  and $\hat B_i$ denotes the rational bubble
  function \begin{align*}
    \hat B_i\definedas \frac{\hat\lambda_i\hat\lambda_{i+1}^2
      \hat\lambda_{i+2}^2}{(\hat\lambda_i
      +\hat\lambda_{i+1})(\hat\lambda_i+\hat\lambda_{i+2})},
  \end{align*}
  which can be continuously extended by zero at $\hat{z}_{i+1}$ and
  $\hat{z}_{i+2}$; the index $i$ has to be understood modulo $3$.
  Thanks to properties of the $\curl$ operator, the local pressure
  space
  \begin{align*}
    \hat{\mathbb{P}}_\Q\definedas \divo \hat{\mathbb{P}}_\V
  \end{align*}
  is the space of constant functions over $\hat\elm$.

  It is clear from \cite{GuzmanNeilan:2011} that the related pairs
  of spaces $\{\Vn,\Qn\}_{n\in\N}$ (compare with \eqref{df:VQ})
  satisfy Assumption \ref{ass:divfree}. For $\PnQ$ we can use a
  Cl{\'e}ment type interpolation or simply the
  best-approximation in $\Qn$; clearly, both satisfy Assumption
  \ref{ass:PQ}. The approximability assumption, Assumption \ref{ass:dense}, follows with
  the mesh-size tending
  to zero.
  It remains to verify Assumption \ref{ass:Pndiv}. To
  this end we analyze the interpolation operator proposed in
  \cite{GuzmanNeilan:2011}. In particular, let
  $\Pi_S^n:W^{1,1}_0(\Omega)^2\to \mathbb{L}^n$ be the Scott--Zhang
  interpolant \cite{ScottZhang:90} into the linear Lagrange finite
  element space $\mathbb{L}^n\definedas\mathbb{L}(\gridn)$ over a triangulation
  $\gridn$, belonging to a shape-regular family of triangulations $\mathbb{G}=\{\gridn\}_{n \in \mathbb{N}}$ of $\Omega$. Then
  $\Pndiv:W^{1,1}_0(\Omega)^2\to \Vn$ is defined by
  \begin{align}\label{eq:ipol}
    \begin{alignedat}{2}
      (\Pndiv \vecv)(z)&\definedas (\Pi_S^n \vecv)(z),&\qquad
      z&\in\nodes^n,
      \\
      \int_\side
      \Pndiv\vecv\,\d s&=\int_\side \vecv\,\d s,&\qquad\side&\in\sides^n,
    \end{alignedat}
  \end{align}
  where $\nodes^n$ and $\sides^n$ denote the vertices, respectively edges,
  of the triangulation $\gridn$, $n\in\N$. This operator is a projector and
  thanks to \cite[(3.14)]{GuzmanNeilan:2011} and the fact that
  $\mathbb{L}^n\subset \Vn$ it thus remains to prove the stability
  estimate \eqref{eq:stability} in Assumption \ref{ass:Pndiv}.

  To this end we fix $n\in\N$. Although the
  claim can be proved using the techniques in
  \cite{GuzmanNeilan:2011}, this would necessitate the introduction of
  additional notation. Thus, for the sake of brevity of the presentation, we give an
  alternative proof.  According to
  \cite{GuzmanNeilan:2011} the interpolation operator
  $\Pndiv$ is correctly defined by \eqref{eq:ipol}. Let
  $\{\hat z_i\}_{i=1,2,3}$ be the vertices of $\hat\elm$ and let
  $\{\hat S_i\}_{i=1,2,3}$ be its edges. Then, any function
  $\hat\vecV\in\hat{\mathbb{P}}_\V$ is uniquely defined by
  $\hat \vecV(\hat z_i)$ and $\int_{\hat\side_i}\hat\vecV\,\d s$,
  $i=1,2,3$. This implies that the mapping
  \begin{align*}
    \hat \vecV \mapsto \sum_{i=1}^3 \big|\hat \vecV(\hat
    z_i)\big|+\bigg|\int_{\hat\side_i}\hat\vecV \,\d s\, \bigg|,\qquad\hat\vecV\in\hat{\mathbb{P}}_\V,
  \end{align*}
  is a norm on $\hat{\mathbb{P}}_\V$. Hence, equivalence of norms on
  finite-dimensional spaces together with \eqref{eq:ipol} yield that
  \begin{align*}
    \int_{\hat\elm}\abs{\Pndiv \vecv\circ
      \boldsymbol{F}_\elm^{-1}}\,\d x&\le c  \sum_{i=1}^3 \Big(\big|\Pndiv
    \vecv\circ \boldsymbol{F}_\elm^{-1}(\hat
    z_i)\big|+\big|\int_{\hat\side_i}\Pndiv \vecv\circ
    \boldsymbol{F}_\elm^{-1} \,\d s \big|\Big)
    \\
    &= c  \sum_{i=1}^3 \Big(\big|\Pi^n_S
    \vecv\circ \boldsymbol{F}_\elm^{-1}(\hat
    z_i)\big|+\big|\int_{\hat\side_i}\vecv\circ
    \boldsymbol{F}_\elm^{-1} \,\d s \big|\Big),\quad\vecv\in W^{1,1}_0(\Omega)^2.
  \end{align*}
  By a scaled trace theorem
  and properties of the Scott--Zhang operator we arrive at
  \begin{align*}
    \int_\elm \abs{\Pndiv \vecv}\,\d x \le c\int_{\Omega^n_\elm}\abs{
      \vecv}+h_{\gridn}\abs{\nabla \vecv}\,\d x,\qquad \vecv\in W^{1,1}_0(\Omega)^2.
  \end{align*}
  Note that $\Pndiv:W^{1,1}_0(\Omega)^2\to\Vn$ is a projector and
  that $\mathbb{L}^n\subset\Vn$. Thus the inequality~\eqref{eq:stability} follows from a standard inverse
  estimate and the Bramble--Hilbert Lemma; compare also with
  \cite[Theorem 3.5]{BelBerDieRu:10}.

\end{ex}

\subsection{The Galerkin approximation\label{ss:Galerkin}}
We are now ready to state the discrete problem. Let
$\{\Vn,\Qn\}_{n\in\N}$ be the finite element spaces of Section
\ref{ss:fem} or \ref{ss:divfree-fem} and let
$\Tri:W^{1,\infty}_0(\Omega)^d\times W^{1,\infty}_0(\Omega)^d\times
W^{1,\infty}_0(\Omega)^d\to\R$ be defined correspondingly.

For $n\in\N$ we call a
triple of
functions $\big(\Un,\,P^n,\,\bSn(\nablas \Un)\big)\in\Vn\times \Qn_0\times
L^{r'}(\Omega)^{d\times d}$ a Galerkin approximation of
\eqref{eq:implicit} if it satisfies
\begin{align}\label{eq:discrete}
   \begin{aligned}
      \int_\Omega\bSn(\cdot,\nablas\Un):\nablas \vecV \,\d x
      +\Trilin{\Un}{\Un}{\vecV}-\scp{\divo \vecV}{P^n}
      &=\dual{\vecf}{\vecV}
      \\
      &\qquad\quad\text{for all}~ \vecV\in\Vn,
      \\
      \int_\Omega Q\divo\Un\,\d x
      &=0\hspace{2pt}\quad\text{for all}~Q\in\Qn.
    \end{aligned}
\end{align}

Restricting the test-functions to $\Vndiv$ the discrete problem
\eqref{eq:discrete} reduces to finding $\Un\in\Vndiv$ such that
\begin{align}\label{eq:Vn0}
  \int_\Omega\bSn(\cdot,\nablas\Un):\nablas \vecV\,\d x
      +\Trilin{\Un}{\Un}{\vecV}= \dual{\vecf}{
        \vecV}\qquad\text{for all}~\vecV\in\Vndiv.
\end{align}
Thanks to~\eqref{eq:SKEW_SYM}, respectively~\eqref{eq:SKEW_SYM2},
it follows from Lemma~\ref{l:Sn}
and Korn's inequality~\eqref{eq:korn} that the nonlinear operator
defined by the left-hand side of \eqref{eq:Vn0} is coercive
on $\Vndiv$.
Since the dimension of $\Vndiv$ is finite, Brouwer's fixed point
theorem ensures the existence of a solution to~\eqref{eq:Vn0}. The
existence of a solution triple to~\eqref{eq:discrete} then follows by
the discrete inf-sup stability implied by Proposition~\ref{p:Dinf-sup}.

Of course, because of the
weak assumptions in the definition of the maximal monotone $r$-graph,
\eqref{eq:discrete} does not define the Galerkin approximation $\Un$
uniquely.
However for each $n\in\N$ we may select an arbitrary one among possibly
infinitely many solution triples and thus obtain a sequence
\begin{align}\label{eq:SEQ}
  \big\{\big(\Un,P^n,\bSn(\cdot,\nablas\Un) \big) \big\}_{n\in\N}.
\end{align}

\subsection{Discrete Lipschitz truncation}
\label{ss:Lipschitz}
In this section we shall present a modified Lipschitz truncation,
which acts on finite element spaces. This {\em discrete Lipschitz
  truncation} is basically a composition of a continuous Lipschitz
truncation and the projector from Assumption \ref{ass:Pndiv}.
For this reason we first introduce a new Lipschitz truncation on
$W^{1,1}_0(\Omega)^d$, based on the results in
\cite{DieningMalekSteinhauer:08,BreDieFuc:12,BreDieSch:12},
which provides finer estimates than
the original Lipschitz truncation technique proposed by Acerbi and
Fusco in \cite{AcerbiFusco:88}.

For $v\in L^{1}(\R^d)$ we define the Hardy--Littlewood
maximal function
\begin{align}\label{df:M}
  M(v)(x)&\definedas
  \sup_{R>0}\fint_{B_R(x)}\abs{v}\,\d y.
\end{align}
For $s\in(1,\infty]$ the
Hardy--Littlewood maximal operator $M$ is continuous from
$L^s(\Rd)$ to $L^{s}(\Rd)$, \ie there exists a constant $c_s>0$ such that
\begin{align}
  \label{eq:HLmax}
  \norm[L^s(\Rd)]{M(v)} \leq c_s\,\norm[L^s(\Rd)]{v}
  \qquad\text{for all } v\in L^s(\Rd),
\end{align}
and it is of weak type $(1,1)$, \ie there exists a constant $c_1>0$ such that
\begin{align}
  \label{eq:HLmax_weak}
  \sup_{\lambda>0}\,\lambda \abs{\{M(v)>\lambda\}} \leq c_1\norm[L^1(\Rd)]{v}
  \qquad\text{for all } v\in L^1(\Rd);
\end{align}
see, e.\,g., \cite{Gra:04}.  For any $\vecv\in W^{1,1}(\R^d)^d$ we set
$M(\vecv)\definedas M(\abs{\vecv})$ and $M(\nabla \vecv)\definedas
M(\abs{\nabla\vecv})$.

Let $\vecv\in W^{1,1}_0(\Omega)^d$; we may then assume that $\vecv\in
W^{1,1}(\Rd)^d$ by extending $\vecv$ by zero outside $\Omega$.  For fixed
$\lambda>0$ we define
\begin{subequations}\label{df:UH}
  \begin{align}
    \label{eq:U}
    \begin{split}
      \mcU_\lambda(\vecv)&\definedas \left\{M(\nabla\vecv)>
        \lambda\right\},
    \end{split}
    \intertext{and} \label{df:H} \mcH_{\lambda}(\vecv)&\definedas \R^d
    \setminus (\mcU_\lambda(\vecv) \cap \Omega) = \left\{ M(\nabla\vecv)\le
      \lambda\right\} \cup (\R^d \setminus \Omega).
  \end{align}
\end{subequations}
Since $M(\nabla \vecv)$ is lower semi-continuous, the set
$\mcU_\lambda(\vecv)$ is open and the set $\mcH_\lambda(\vecv)$ is closed.
According to~\cite{DieningMalekSteinhauer:08} it follows that $\vecv$
restricted to $\mcH_\lambda(\vec{v})$ is Lipschitz continuous and
therefore also bounded. More precisely, we have that
\begin{align}\label{eq:Lcont}
  \abs{\vecv(x)-\vecv(y)}\leq c\,\lambda\abs{x-y}
\end{align}
for all $x,y\in \mcH_{\lambda}(\vec{v})$, where the constant $c>0$
depends on $\Omega$.

It remains to extend $\vecv_{|\mcH_{\lambda}(\vec{v})}$ to a Lipschitz
continuous function on $\R^d$.
The result in~\cite{DieningMalekSteinhauer:08} is based on the
so-called Kirszbraun extension theorem (cf. Theorem 2.10.43 in \cite{Federer:69})
and uses an additional truncation of $\vecv$ with respect to
$M(\vec{v})$. This can be avoided by proceeding similarly as
in~\cite{BreDieFuc:12,BreDieSch:12}, \ie
extending $\vecv_{|\mcH_{\lambda}(\vec{v})}$
by means of a partition of unity on a
Whitney covering of the open and bounded set $ \mcU_\lambda(\vecv)$.
To this end, we assume w.l.o.g. that $\mcU_\lambda(\vecv)
\neq\emptyset$; otherwise $\vecv$ does not need to be extended since
$\mcH_\lambda(\vecv)=\R^d$.
According to \cite{Gra:04,BreDieFuc:12} there exists a decomposition
of the open set
$\mcU_\lambda(\vecv)$ into a family of (dyadic) closed cubes
$\{\mcQ_j\}_{j\in\N}$, with side lengths
$\ell_j\definedas\ell(\mcQ_j)$, $j\in\N$, such that
\begin{enumerate}[leftmargin=1cm,itemsep=1ex,label={(W\arabic{*})}]
\item \label{itm:Wdis} $\bigcup_{j\in\N} {\mcQ_j} = \mcU_\lambda(\vecv)$
  and the $\mcQ_j$'s have pair-wise disjoint interiors.
\item \label{itm:Wbnd} $8 \sqrt{d}\, \ell(\mcQ_j) \leq {\rm dist}(\mcQ_j,
  \partial \mcU_\lambda(\vecv)) \leq 32 \sqrt{d}\, \ell(\mcQ_j)$.
\item \label{itm:W3}If $\mcQ_j\cap\mcQ_k\neq\emptyset$ for some $j,k\in\N$, then
  \begin{align*}
    \textstyle{\frac{1}{2}} \leq \frac{\ell_j}{\ell_k} \leq 2.
  \end{align*}
\item \label{itm:Wnumber} For a given $\mcQ_j$ there exist at most $(3^d-1)2^d$ cubes $\mcQ_k$ with $\mcQ_j\cap\mcQ_k\neq\emptyset$.
\end{enumerate}
For a fixed cube $\mcQ\in\Rd$ with barycenter $z$ and any $c>0$, we define
\begin{align*}
  c\mcQ\definedas\left\{x\in\Rd\colon
    \max_{i=1,\ldots,d}\abs{x_i-z_i}\leq c\,\ell(\mcQ)\right\}.
\end{align*}
Hence, it follows from \ref{itm:Wbnd} with
$\theta_d := 2+64 \sqrt{d}$, that
\begin{align}
  (\theta_d \mcQ_j) \cap \mcH_\lambda(\vecv) \neq \emptyset\quad
  \text{for all $j\in\N$.}\label{eq:theta_d}
\end{align}

Let $\mcQ_j^\ast := \sqrt{\frac{9}{8}} \mcQ_j$ and $\mcQ_j^{\ast\ast} :=
\frac{9}{8} \mcQ_j$. Thanks to \ref{itm:Wnumber}, the enlarged cubes
$\mcQ_j^{\ast\ast}$, $j\in\N$, are locally finite, i.e., they satisfy $\sum_j
\chi_{\mcQ_j^{\ast\ast}} \leq c$ with a constant $c>0$, which depends
on $d$ only. Thanks to the overlaps of the $\mcQ^\ast_j$'s, there
exists a partition of unity $\{\psi_j\}_{j\in\N}$ subordinated to the
family~$\{\mcQ_j^\ast\}_{j\in\N}$ with the following properties:
\begin{itemize}
\item $\sum_j \psi_j =\chi_{\mcU_\lambda(\vec{v})}$ and $0 \leq \psi_j
  \leq 1$ for all $j\in\N$;
\item $\chi_{\frac 78 \mcQ_j} \leq\psi_j \leq \chi_{\mcQ_j^\ast}$, for
  all $j\in\N$;
\item $\psi_j \in C^\infty_0(\mcQ_j^\ast)$ with $\abs{\nabla \psi_j}
  \leq c\,\ell_j^{-1}$, for all $j\in\N$.
\end{itemize}

The Lipschitz truncation of $\vec{v}$ is then denoted by $\vec{v}_\lambda$ and is defined by
\begin{subequations}
  \label{eq:lip}
  \begin{align}
    \label{eq:lip1}
    \vec{v}_\lambda &:=
    \begin{cases}
      \sum_{j\in \N} \psi_j \vec{v}_j &\qquad \text{in
        $\mcU_\lambda(\vec{v})$},
      \\
      \vec{v} &\qquad \text{elsewhere},
    \end{cases}
  \intertext{where}
\label{eq:lip2}
    \vec{v}_j &:=
    \begin{cases}
      \fint_{\mcQ_j^{\ast\ast}} \vec{v}\,\dx &\qquad \text{if } \mcQ_j^\ast
      \subset \Omega,
      \\
      0 &\qquad \text{elsewhere}.
    \end{cases}
  \end{align}
\end{subequations}
We emphasize that the definition of the functions $\vecv_j$, $j \in \mathbb{N}$, here differs from the one
in~\cite{BreDieFuc:12}, since we need to preserve the no-slip boundary
condition for the velocity field on~$\partial \Omega$ under Lipschitz truncation.
Combining the techniques of~\cite{DieningMalekSteinhauer:08} and~\cite{BreDieFuc:12}
we obtain the following result; for ease of readability of the main body of the paper, the proof
of Theorem \ref{thm:Tlest} is deferred to the Appendix.
\begin{thm}
  \label{thm:Tlest}
  Let $\lambda>0$ and $\vecv \in W^{1,1}_0(\Omega)^d$. Then, the Lipschitz
  truncation defined in \eqref{eq:lip} has the following properties:
  $\vecv_\lambda \in W^{1,\infty}_0(\Omega)^d$, and
  \begin{enumerate}[leftmargin=1cm,itemsep=1ex,label={\rm (\alph{*})}]
  \item \label{itm:Tlestsupp} $\vecv_\lambda= \vecv$ on
    $\mcH_\lambda(\vecv)$, \ie $\{\vecv\neq\vecv_\lambda\}\subset
    \mcU_\lambda(\vecv) \cap \Omega = \{ M(\nabla
    \vec{v})>\lambda\}\cap\Omega$;
  \item \label{itm:TlestL1} $\norm[s]{\vecv_\lambda} \leq
    c\,\norm[s]{\vecv}$ for all $ s \in [1,\infty]$, with $\vecv
    \in L^s(\Omega)^d$;
  \item \label{itm:TlestW1h} $\norm[s]{\nabla \vecv_\lambda}
    \leq c\, \norm[s]{\nabla \vecv}$ for all $s \in [1,\infty]$,
    with $\vecv \in W^{1,s}_0(\Omega)^d$;
  \item \label{itm:TlestLinfty} $|\nabla \vecv_\lambda| \leq c\,
    \lambda \chi_{\mcU_\lambda(\vecv) \cap \Omega} + \abs{\nabla
      \vecv} \chi_{\mcH_\lambda(\vecv)} \leq c \lambda$ almost
    everywhere in $\Rd$.
  \end{enumerate}
  The constants appearing in the inequalities stated in parts
  \ref{itm:TlestL1}, \ref{itm:TlestW1h} and \ref{itm:TlestLinfty}
  depend on $\Omega$ and $d$. In \ref{itm:TlestL1} and \ref{itm:TlestW1h} they
  additionally depend on $s$.
\end{thm}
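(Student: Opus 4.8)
The plan is to build the truncation in three stages, following the Whitney-decomposition approach of \cite{BreDieFuc:12} but keeping track of the modified definition \eqref{eq:lip2} of the local averages $\vecv_j$. First I would establish the pointwise Lipschitz bound \eqref{eq:Lcont} on $\mcH_\lambda(\vecv)$, which is a standard consequence of the fact that $M(\nabla\vecv)\le\lambda$ there together with the Poincar\'e-type estimate $\fint_{B}\abs{\vecv-\vecv_B}\dx\le c\,r_B\,\fint_{B}\abs{\nabla\vecv}\dx\le c\,r_B\,\lambda$; the argument in~\cite{DieningMalekSteinhauer:08} applies verbatim since it only uses the Lebesgue points of $\vecv$ lying in $\{M(\nabla\vecv)\le\lambda\}$. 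At the same time I would record the elementary estimate $\abs{\vecv_j}\le\fint_{\mcQ_j^{\ast\ast}}\abs{\vecv}\dx\le c\,\ell_j\,\lambda + \abs{\vecv(x_0)}$ for any $x_0\in(\theta_d\mcQ_j)\cap\mcH_\lambda(\vecv)$ (using \eqref{eq:theta_d} and the Lipschitz bound), which is the crucial quantitative control linking the interior averages to the behavior of $\vecv$ on the good set.

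Next I would verify each of the four listed properties. Property~\ref{itm:Tlestsupp} is immediate from \eqref{eq:lip1}: off $\mcU_\lambda(\vecv)$ we have $\vecv_\lambda=\vecv$ by definition, and the set $\mcU_\lambda(\vecv)\cap\Omega=\{M(\nabla\vecv)>\lambda\}\cap\Omega$ by \eqref{df:UH} since $\vecv$ is extended by zero and the extension is still $W^{1,1}(\R^d)^d$. For the gradient bound \ref{itm:TlestLinfty}, inside a cube $\mcQ_j$ I would write $\nabla\vecv_\lambda=\sum_k(\nabla\psi_k)(\vecv_k-\vecv_j)$ on $\mcQ_j$ (using $\sum_k\nabla\psi_k=0$ on $\mcU_\lambda(\vecv)$ and that only finitely many $\mcQ_k^\ast$ meet $\mcQ_j$), and then bound $\abs{\vecv_k-\vecv_j}\le c\,\ell_j\,\lambda$ by comparing both averages to the value of $\vecv$ at a common point in $\mcH_\lambda(\vecv)$ reachable from neighbouring cubes via \eqref{eq:theta_d}, \ref{itm:W3} and the Lipschitz bound \eqref{eq:Lcont}; combined with $\abs{\nabla\psi_k}\le c\,\ell_k^{-1}\le c\,\ell_j^{-1}$ this gives $\abs{\nabla\vecv_\lambda}\le c\,\lambda$ in $\mcU_\lambda(\vecv)\cap\Omega$, while on $\mcH_\lambda(\vecv)$ one has $\nabla\vecv_\lambda=\nabla\vecv$ a.e., so $\abs{\nabla\vecv_\lambda}\le c\,\lambda\,\chi_{\mcU_\lambda(\vecv)\cap\Omega}+\abs{\nabla\vecv}\,\chi_{\mcH_\lambda(\vecv)}$; the last inequality $\le c\,\lambda$ follows since $M(\nabla\vecv)\ge\abs{\nabla\vecv}$ a.e. by Lebesgue differentiation. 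For the $L^s$ bounds \ref{itm:TlestL1} and \ref{itm:TlestW1h} I would split the integral over $\mcH_\lambda(\vecv)$, where $\vecv_\lambda=\vecv$ trivially contributes $\norm[s]{\vecv}$, and over the Whitney cubes, where on each $\mcQ_j$ one estimates $\abs{\vecv_\lambda}\le c\sum_{k}\chi_{\mcQ_j}\fint_{\mcQ_k^{\ast\ast}}\abs{\vecv}$ and, after raising to the $s$-th power, applies Jensen's inequality and the finite-overlap property of the $\mcQ_j^{\ast\ast}$ to reduce to $\int_{\mcU_\lambda(\vecv)}\abs{\vecv}^s\dx\le\norm[s]{\vecv}^s$; the gradient estimate is obtained the same way using \ref{itm:TlestLinfty} on the cubes together with $\abs{\mcU_\lambda(\vecv)\cap\Omega}\le\abs{\{M(\nabla\vecv)>\lambda\}}$ and, for $s<\infty$, the strong or weak type bounds \eqref{eq:HLmax}, \eqref{eq:HLmax_weak} for $M$ to absorb $\lambda^s\abs{\mcU_\lambda(\vecv)}$ into $\norm[s]{\nabla\vecv}^s$. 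Finally $\vecv_\lambda\in W^{1,\infty}_0(\Omega)^d$ follows because $\vecv_j=0$ whenever $\mcQ_j^\ast\not\subset\Omega$, so $\vecv_\lambda$ vanishes near $\partial\Omega$, and it is Lipschitz by \ref{itm:TlestLinfty}.

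The main obstacle I expect is the bookkeeping in property~\ref{itm:TlestLinfty}, specifically controlling the differences $\abs{\vecv_k-\vecv_j}$ between averages over neighbouring Whitney cubes in a way that is uniform in $j$ and respects the modified definition \eqref{eq:lip2}: when $\mcQ_j^\ast\subset\Omega$ but a neighbour $\mcQ_k^\ast\not\subset\Omega$ one has $\vecv_k=0$, and one must show $\abs{\vecv_j}=\abs{\vecv_j-\vecv_k}\le c\,\ell_j\,\lambda$ by exploiting that such a cube $\mcQ_j$ is close to $\partial\Omega$, where the zero-extension of $\vecv$ is small — this is precisely the point where our definition departs from~\cite{BreDieFuc:12} and where the argument needs the geometric relation between $\ell_j$, ${\rm dist}(\mcQ_j,\partial\mcU_\lambda(\vecv))$ and ${\rm dist}(\mcQ_j,\partial\Omega)$ encoded in \ref{itm:Wbnd} and \eqref{eq:theta_d}. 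Once this case distinction is handled cleanly, the remaining estimates are routine applications of Jensen's inequality, the finite-overlap of the Whitney cubes, and the maximal function bounds, and can be organized exactly as in~\cite{DieningMalekSteinhauer:08,BreDieFuc:12}; accordingly I would present the full details in the Appendix as announced.
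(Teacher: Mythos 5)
Your outline tracks the paper's strategy (same Whitney cover, same modified definition of the $\vecv_j$, same splitting between $\mcH_\lambda$ and $\mcU_\lambda$ for the $L^s$ bounds), but the one step you single out as "the main obstacle" is exactly where the proof genuinely hinges, and the direction you sketch for resolving it is not the one that works. You propose to control $\abs{\vecv_j-\vecv_k}$ by comparing both averages to a common Lebesgue point in $\mcH_\lambda(\vecv)$ via the pointwise Lipschitz estimate \eqref{eq:Lcont}, and you say the boundary cubes will be handled by the geometric information in \ref{itm:Wbnd} and \eqref{eq:theta_d}. That geometry, however, only relates $\ell_j$ to ${\rm dist}(\mcQ_j,\partial\mcU_\lambda(\vecv))$; it says nothing about ${\rm dist}(\mcQ_j,\partial\Omega)$, and \eqref{eq:Lcont} also does not directly compare a point in $\Omega\cap\mcH_\lambda$ to a point in $\R^d\setminus\Omega$ at the level you need. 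The ingredient you are missing is that $\Omega$ has a Lipschitz boundary, which guarantees $\abs{\mcQ_j^{\ast\ast}\setminus\Omega}\geq c\,\abs{\mcQ_j^\ast}$ whenever $\mcQ_j^\ast\nsubset\Omega$; this makes the zero-extension of $\vecv$ vanish on a fixed proportion of $\mcQ_j^{\ast\ast}$ and lets you apply Friedrichs' inequality there, giving $\fint_{\mcQ_j^{\ast\ast}}\abs{\vecv}/\ell_j\leq c\fint_{\mcQ_j^{\ast\ast}}\abs{\nabla\vecv}\leq c\lambda$ exactly as in the interior Poincar\'e case. With that single estimate in place, the averages $\vecv_j$ are controlled uniformly (whether or not $\mcQ_j^\ast\subset\Omega$) and the difference bound $\abs{\vecv_j-\vecv_k}\leq c\,\ell_j\lambda$ follows by comparing each $\vecv_j$ to the average of $\vecv$ over the overlap $\mcQ_j^{\ast\ast}\cap\mcQ_k^{\ast\ast}$; no detour through \eqref{eq:Lcont} is needed at all. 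As it stands, your plan leaves the boundary case unresolved, which is the very thing that distinguishes this truncation from the one in~\cite{BreDieFuc:12}.

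A second, smaller gap is the claim that $\vecv_\lambda\in W^{1,\infty}_0(\Omega)^d$ "follows because $\vecv_j=0$ whenever $\mcQ_j^\ast\nsubset\Omega$". That gets you that $\vecv_\lambda$ vanishes where $\vecv$ does, but it does not by itself establish that the infinite sum $\sum_j\psi_j(\vecv_j-\vecv)$ converges to an element of $W^{1,1}_0(\mcU_\lambda(\vecv)\cap\Omega)^d$; for that you need a convergence argument, and the paper obtains it from the weak-$(1,1)$ maximal bound \eqref{eq:HLmax_weak}, which gives $\lambda\,\abs{\mcU_\lambda(\vecv)}\leq c\norm[L^1]{\nabla\vecv}$ and hence absolute convergence of the series of gradients in $L^1$. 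You do invoke \eqref{eq:HLmax_weak} later, but for a different purpose, and the $W^{1,1}_0$ membership should not be waved through. The remaining parts of your sketch — the reduction of \ref{itm:TlestL1} and \ref{itm:TlestW1h} to Jensen and finite overlap, the pointwise gradient bound on each $\mcQ_j$ via $\sum_k\nabla\psi_k=0$ — are correct and match the paper.
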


We next modify the Lipschitz truncation so that for finite
element functions the truncation is again a finite element function.
To this end we recall the definition of the finite
element space $\Vn=\V(\gridn)$ of Section~\ref{ss:fem_spaces} or
\ref{ss:divfree-fem}.

Let $\lambda>0$ and fix $n\in\N$. Since $\Vn\subset W^{1,1}_0(\Omega)^d$, we could
apply the truncation defined in \eqref{eq:lip}. However,
since in general the Lipschitz truncation ${\vecV}_{\!\lambda}$ of $\vec{V}\in\Vn$ does not
belong to $\Vn$, we shall define the discrete Lipschitz truncation by
\begin{align}\label{eq:Vnl}
  \vecV_{n,\lambda}\definedas \Pndiv  \vecV_{\lambda}\in\Vn.
\end{align}

According to the next lemma the interpolation operator $\Pndiv$ is local, in the sense that it
modifies $\vecV_\lambda$ in a neighborhood of $\mcU_{\lambda}(\vecV)$ only.
  \begin{lem}\label{l:SO}
    Let  $\vecV\in\Vn$. With the notation adopted in this section, we have that
    \begin{align*}
      \!\!\!\!\!\left\{\vecV_{n,\lambda}\neq
       \vecV\right\}\subset\Omega_{\lambda}^n(\vecV)\definedas
     \operatorname{interior}\left(\bigcup\big\{\Omega^n_\elm\mid\elm\in\gridn~\text{with}~
      \elm\cap \mcU_{\lambda}(\vecV)\neq\emptyset \big\}\right).
   \end{align*}
 \end{lem}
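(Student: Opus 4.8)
The plan is to trace the two–stage construction of $\vecV_{n,\lambda}$. By~\eqref{eq:Vnl} we have $\vecV_{n,\lambda}=\Pndiv\vecV_\lambda$, and by Theorem~\ref{thm:Tlest}\,\ref{itm:Tlestsupp} we know $\{\vecV\neq\vecV_\lambda\}\subset\mcU_\lambda(\vecV)\cap\Omega$. Since $\vecV\in\Vn$, the operator $\Pndiv$ reproduces $\vecV$ exactly, i.e.\ $\Pndiv\vecV=\vecV$ (it is a projector onto $\Vn$, cf.\ Assumption~\ref{ass:Pndiv}). Hence, by linearity,
\begin{align*}
  \vecV_{n,\lambda}-\vecV = \Pndiv\vecV_\lambda-\Pndiv\vecV = \Pndiv(\vecV_\lambda-\vecV),
\end{align*}
so it suffices to show that $\Pndiv(\vecV_\lambda-\vecV)$ vanishes on $\Omega\setminus\Omega^n_\lambda(\vecV)$, where $\vecV_\lambda-\vecV$ is supported in $\mcU_\lambda(\vecV)\cap\Omega$.

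The key step is then purely about the locality of $\Pndiv$. First I would restrict attention to a single element $\elm\in\gridn$ with $\elm\cap\mcU_\lambda(\vecV)=\emptyset$; for any such $\elm$, the patch $\Omega^n_\elm$ is by definition disjoint from $\mcU_\lambda(\vecV)$, so the function $\vecw\definedas\vecV_\lambda-\vecV$, whose support lies in $\mcU_\lambda(\vecV)\cap\Omega$, satisfies $\vecw\equiv 0$ on $\Omega^n_\elm$. The local $W^{1,1}$-stability estimate~\eqref{eq:stability} of $\Pndiv$ gives
\begin{align*}
  \fint_\elm \abs{\Pndiv\vecw}+h_{\gridn}\abs{\nabla\Pndiv\vecw}\,\d x \le c_1\fint_{\Omega^n_\elm}\abs{\vecw}+h_{\gridn}\abs{\nabla\vecw}\,\d x = 0,
\end{align*}
whence $\Pndiv\vecw\equiv 0$ on $\elm$. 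Therefore $\{\Pndiv\vecw\neq 0\}$ is contained in the union of those elements $\elm$ that do meet $\mcU_\lambda(\vecV)$. Passing to the interior of the union of the corresponding patches $\Omega^n_\elm$ (which is exactly $\Omega^n_\lambda(\vecV)$ as defined in the statement) and using that $\{\vecV_{n,\lambda}\neq\vecV\}=\{\Pndiv\vecw\neq 0\}$ up to a null set, we obtain the claimed inclusion. Note this argument is completely uniform in the choice of finite element pair, since it only uses~\eqref{eq:stability} and the projector property, both of which hold under Assumption~\ref{ass:Pndiv} for all the spaces considered in Sections~\ref{ss:fem} and~\ref{ss:divfree-fem}.

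The only mild subtlety — and the one place where I would be careful — is the bookkeeping with interiors and null sets: the set $\{\Pndiv\vecw\neq 0\}$ is, strictly speaking, defined only up to a set of measure zero, while $\Omega^n_\lambda(\vecV)$ is an honest open set, so I would phrase the conclusion as $\{\vecV_{n,\lambda}\neq\vecV\}\subset\Omega^n_\lambda(\vecV)$ interpreting the left-hand side appropriately (or, since $\vecV_{n,\lambda}-\vecV$ is a finite element function and hence continuous on each closed element, one can work elementwise and avoid null-set issues altogether: if $\vecV_{n,\lambda}(x)\neq\vecV(x)$ for some $x$, pick an element $\elm\ni x$; the computation above forces $\elm\cap\mcU_\lambda(\vecV)\neq\emptyset$, so $x\in\Omega^n_\elm\subset\overline{\Omega^n_\lambda(\vecV)}$, and a boundary point can be pushed into the interior by enlarging the patch slightly, or simply absorbed since the statement is about the open set $\Omega^n_\lambda(\vecV)$ which already contains all such $\Omega^n_\elm$ in its closure — in fact in its interior, because each $\Omega^n_\elm$ is a finite union of closed elements whose interiors cover it up to lower-dimensional faces). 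I do not anticipate any real obstacle here; the lemma is essentially a restatement of the locality built into Assumption~\ref{ass:Pndiv}.
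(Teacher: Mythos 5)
Your overall plan --- write $\vecV_{n,\lambda}-\vecV = \Pndiv(\vecV_\lambda - \vecV)$ via the projector property of $\Pndiv$, and then use the locality expressed in \eqref{eq:stability} together with Theorem~\ref{thm:Tlest}\ref{itm:Tlestsupp} --- is the right one and matches the paper's argument. However, there is a genuine error in the localization step: you assert that $\elm\cap\mcU_\lambda(\vecV)=\emptyset$ implies that the whole patch $\Omega^n_\elm$ is disjoint from $\mcU_\lambda(\vecV)$. This is false. The patch $\Omega^n_\elm$ is the union of all elements touching $\elm$, and nothing prevents a neighbor $\elm'$ of $\elm$ from intersecting $\mcU_\lambda(\vecV)$ even though $\elm$ itself does not. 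On such an $\elm$ the right-hand side of \eqref{eq:stability} does not vanish, so you cannot conclude $\Pndiv(\vecV_\lambda-\vecV)\equiv 0$ on $\elm$. Consequently the intermediate claim you derive --- that $\{\vecV_{n,\lambda}\neq\vecV\}$ is contained in the union of elements that \emph{themselves} meet $\mcU_\lambda(\vecV)$ --- is strictly stronger than the lemma and is not true in general; it is precisely because $\Pndiv$ spreads information over one layer of neighbors that the lemma needs patches in its statement.

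The repair is small and returns you to the paper's proof: restrict to elements $\elm$ for which the \emph{whole patch} avoids $\mcU_\lambda(\vecV)\cap\Omega$, i.e. $\Omega^n_\elm\subset\mcH_\lambda(\vecV)$, which by the definition of $\Omega_\lambda^n(\vecV)$ is equivalent to $\elm\subset\Rd\setminus\Omega_\lambda^n(\vecV)$. (Indeed, if some $\elm''\subset\Omega^n_\elm$ met $\mcU_\lambda(\vecV)$, then $\elm\subset\Omega^n_{\elm''}$, so the interior of $\elm$ would lie in $\Omega_\lambda^n(\vecV)$, contradicting the hypothesis.) On such $\elm$ the computation you wrote is correct and gives $\vecV_{n,\lambda}=\vecV$ on $\elm$, which is exactly the claimed inclusion. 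Your remarks about null sets and interiors are fine and not where the difficulty lies.
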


 \begin{proof}
   The stated inclusion follows from \eqref{eq:stability} in Assumption
   \ref{ass:Pndiv}. In particular, let
   $\elm\in \gridn$ be such that $\elm\subset \Rd\setminus
   \Omega_\lambda^n(\vecV)$; then,
   $\Omega^n_\elm\subset\mcH_\lambda(\vecV)$.
   Consequently, by Theorem \ref{thm:Tlest}\ref{itm:Tlestsupp}, we have
   $\vecV_{\lambda}=\vecV$ on $\Omega^n_\elm$. Hence we deduce
   from
   \eqref{eq:stability}, our assumption that $\vecV\in\Vn$,  and the fact that $\Pndiv$
   is a projector, that
   \begin{align*}
     \fint_\elm \abs{\vecV-\Pndiv\vecV_{\lambda}}\,\d x&=
     \fint_\elm \abs{\Pndiv(\vecV-\vecV_{\lambda})}\,\d x
     \\
     &\le
     c
     \fint_{\Omega^n_\elm}
     \abs{\vecV-\vecV_{\lambda}}+h_{\gridn}\abs{\nabla(\vecV-\vecV_{\lambda})
     }\,\d x=0,
   \end{align*}
   \ie $\vecV=\vecV_{n,\lambda}=\Pndiv\vecV_{\lambda}$ on $\elm$.
   This proves the assertion.
   %
 \end{proof}

 The set $\Omega^n_\lambda(\vecV)$ from Lemma \ref{l:SO}
 is larger than $\mcU_\lambda(\vecV) \cap \Omega$. However, the next result
 states that we can keep the increase of the set under control.
 This is the key observation for the construction of the discrete Lipschitz truncation.
 \begin{lem}\label{l:OsubU}
   For $n\in\N$, $\vecV\in\Vn$ and $\lambda>0$, let
   $\Omega_\lambda^n(\vecV)$ be defined as in Lemma~\ref{l:SO}. Then,
   there exists a
   $\kappa\in(0,1)$ only depending on $\hat\P_\V$ and
   the shape-regularity of $\mathbb{G}$, such that
   \begin{align*}
     \mcU_\lambda(\vecV) \cap \Omega \subset
     \Omega_\lambda^n(\vecV) \subset
     \mcU_{\kappa\lambda}(\vecV) \cap \Omega.
   \end{align*}
 \end{lem}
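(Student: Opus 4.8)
The first inclusion $\mcU_\lambda(\vecV) \cap \Omega \subset \Omega_\lambda^n(\vecV)$ is essentially immediate from the definition: if $x \in \mcU_\lambda(\vecV) \cap \Omega$, then $x$ lies in some element $\elm \in \gridn$ with $\elm \cap \mcU_\lambda(\vecV) \neq \emptyset$, so $x \in \Omega_\elm^n \subset \Omega_\lambda^n(\vecV)$ (one only needs to be a little careful that $x$ lies in the \emph{interior} of the union, which follows because any point of $\Omega$ is either in the open set $\mcU_\lambda(\vecV)\cap\Omega$ itself or in an interior face/vertex patch — in any case in the interior of the union of the closed patches $\Omega_\elm^n$). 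So the real content is the second inclusion $\Omega_\lambda^n(\vecV) \subset \mcU_{\kappa\lambda}(\vecV) \cap \Omega$ for a suitable $\kappa \in (0,1)$.

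For that, I would argue by contraposition on the level of elements. Suppose $\elm \in \gridn$ is such that $\elm \cap \mcU_\lambda(\vecV) \neq \emptyset$; I want to show that $\Omega_\elm^n \subset \mcU_{\kappa\lambda}(\vecV)$, i.e. $M(\nabla \vecV) > \kappa\lambda$ on all of $\Omega_\elm^n$. Pick $x_0 \in \elm$ with $M(\nabla\vecV)(x_0) > \lambda$; by definition of the maximal function there is a ball $B_R(x_0)$ with $\fint_{B_R(x_0)} |\nabla\vecV| \,\dy > \lambda$. The key point is that $\nabla\vecV$ is piecewise polynomial of bounded degree on a shape-regular mesh, so one has a \emph{local doubling / self-improving} property: by equivalence of norms on the finite-dimensional reference space $\hat\P_\V$ together with standard scaling, the average of $|\nabla\vecV|$ over $B_R(x_0)$ is comparable, up to a constant depending only on $\hat\P_\V$ and the shape-regularity, to its average over a somewhat larger concentric ball. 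Hence for any other point $y \in \Omega_\elm^n$ (which is within a bounded number of mesh-widths of $x_0$), the enlarged ball $B_{R'}(y)$ with $R' \simeq R + \diam(\Omega_\elm^n)$ contains $B_R(x_0)$ and has comparable radius, and one gets
\begin{align*}
  M(\nabla\vecV)(y) \ge \fint_{B_{R'}(y)} |\nabla\vecV|\,\dy
  \ge c\,\fint_{B_R(x_0)} |\nabla\vecV|\,\dy > c\,\lambda,
\end{align*}
so $y \in \mcU_{\kappa\lambda}(\vecV)$ with $\kappa := \min\{c,\tfrac12\} \in (0,1)$. The one subtlety is the case where $R$ is very small compared to $\diam\elm$: then $B_R(x_0)$ might be interior to a single element, but $\nabla\vecV$ restricted to one element is a polynomial, so $M(\nabla\vecV)(x_0)$ is controlled (above and below) by an element-wise average of $|\nabla\vecV|$, and one reduces to a ball of radius $\gtrsim \diam\elm$; alternatively one observes directly that the maximal function of a single polynomial does not vary too wildly within the patch. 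Conversely, if $R$ is large compared to $\diam(\Omega_\elm^n)$, the comparison is trivial since $B_{R'}(y) \supset B_R(x_0)$ with $R'/R$ bounded.

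The main obstacle, and the step requiring the most care, is making the comparison $M(\nabla\vecV)(y) \gtrsim M(\nabla\vecV)(x_0)$ uniform in the scale $R$ — that is, handling simultaneously the regime where the optimal ball for $x_0$ is sub-element-scale (use that $\nabla\vecV$ is polynomial on each element and invoke equivalence of norms on $\hat\P_\V$ after pulling back via $\boldsymbol{F}_\elm$) and the regime where it is large (trivial enlargement). In both regimes the constants must depend only on $\hat\P_\V$ and the shape-regularity constant of $\mathbb{G}$, not on $n$, which is exactly why affine equivalence and shape-regularity are invoked. Once the element-wise statement $\elm \cap \mcU_\lambda(\vecV) \neq \emptyset \implies \Omega_\elm^n \subset \mcU_{\kappa\lambda}(\vecV)$ is in hand, taking the union over all such $\elm$ and then the interior gives $\Omega_\lambda^n(\vecV) \subset \mcU_{\kappa\lambda}(\vecV)$, and intersecting with $\Omega$ (noting $\Omega_\lambda^n(\vecV) \subset \Omega$ by construction) completes the proof.
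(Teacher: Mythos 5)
Your proposal follows essentially the same route as the paper's proof: both isolate, via the definition of the maximal function, a witnessing ball $B_R(x_0)$ with $x_0\in\elm$, and then split into the regime where $B_R(x_0)$ stays inside $\Omega^n_\elm\cup(\Rd\setminus\Omega)$ (handled by the inverse estimate / norm equivalence on $\hat\P_\V$ after scaling, to pass from a small-ball average to a patch average) and the regime where $B_R(x_0)$ escapes the patch (so $R\gtrsim\diam\elm$ and a trivial enlargement of the ball suffices). One small phrasing slip: the parenthetical claim that $M(\nabla\vecV)(x_0)$ itself is bounded \emph{above} by a patch average is not literally true (large balls can make $M$ big irrespective of the local patch); what the argument actually uses, and what the paper proves, is that the particular average $\fint_{B_R(x_0)}|\nabla\vecV|\,\d y$ over a ball contained in $\Omega^n_\elm\cup(\Rd\setminus\Omega)$ is bounded above by $\norm[L^\infty(\Omega^n_\elm)]{\nabla\vecV}$ and hence, by the local inverse estimate, by $\fint_{\Omega^n_\elm}|\nabla\vecV|\,\d y$.
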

 \begin{proof}
   Thanks to the definition of $\Omega_\lambda^n(\vecV)$, the first inclusion is clear.
   It thus remains to show the second inclusion.
   In order
   to avoid problems at the boundary $\partial\Omega$ we extend
   $\vecV$ to $\Rd$ by zero outside $\Omega$.
   Let
   $x\in\Omega_\lambda^n(\vecV)$; then, there
   exists $\elm\in\gridn$, $\elm\cap\mcU_\lambda(\vecV)\neq\emptyset$
   such that $x\in\Omega^n_\elm$. Consequently, by \eqref{eq:U} and \eqref{df:M},
   there exists an $x_0\in\elm$ and an $R>0$
   such that
   \begin{align*}
     \fint_{B_R(x_0)}\abs{\nabla\vecV}\,\d y>\lambda.
   \end{align*}

  Suppose that $B_R(x_0)\subset
  \left(\Omega^n_\elm\cup(\Rd\setminus\Omega)\right)$;
  then, thanks to norm-equivalence
  in finite-dimensional spaces, we have, by a standard scaling
  argument, that
  \begin{align*}
    \lambda<\fint_{B_R(x_0)}\abs{\nabla \vecV}\,\d y\le
    \norm[L^{\infty}(\Omega^n_\elm)]{\nabla \vecV}\le \tilde{c}_1
    \fint_{\Omega^n_\elm}\abs{\nabla \vecV}\,\d y,
  \end{align*}
  where the constant $\tilde{c}_1$ depends solely on $\hat\P_\V$ and the
  shape-regularity of $\mathbb{G}$. Let $B_\rho(x)$ be the
  smallest ball with $\overline{B_\rho(x)} \supset \Omega^n_E$ and
  observe that
  $\abs{B_{\rho}(x)}\le \tilde{c}_2 \abs{\Omega^n_\elm}$ with a
  constant $\tilde{c}_2>0$ depending only on the shape-regularity of
  $\mathbb{G}$.  Consequently,
  \begin{align*}
    M(\nabla \vecV)(x) \geq \fint_{B_\rho(x)} \abs{\nabla \vecV}\,\dy
    \geq \frac{1}{\tilde{c}_2} \fint_{\Omega^n_E}\abs{\nabla \vecV}\,\dy
    \geq \frac{1}{\tilde{c}_1 \tilde{c}_2} \lambda.
  \end{align*}
   In other words, we have that $x\in \mcU_{(\tilde{c}_1\tilde{c}_2)^{-1}
    \lambda}(\vecV) \cap \Omega$.

  We now consider the case $B_R(x_0)\nsubset
  \left(\Omega^n_\elm\cup(\Rd\setminus\Omega)\right)$.  Since
  $x_0\in\elm$, it follows that $\tilde{c}_3R\ge \diam(\elm)$, with a
  constant
  $\tilde{c}_3>0$ only depending on the shape-regularity of $\mathbb{G}$.
  As $x\in\Omega^n_\elm$, there exists a constant $\tilde{c}_4>1$ such that
  $B_{\tilde{c}_4R}(x)\supset(\elm\cup B_{R}(x_0))$. Hence,
  \begin{align*}
    M (\nabla \vecV)(x)\geq \fint_{B_{\tilde{c}_4R}(x)}\abs{\nabla
      \vecV}\,\dy\geq \tilde{c}_4^{-d} \fint_{B_R(x_0)}\abs{\nabla
      \vecV}\,\dy > \tilde{c}_4^{-d} \lambda,
  \end{align*}
  and we deduce that $x\in \mcU_{\tilde{c}_4^{-d} \lambda}(\vecV) \cap
  \Omega$. Combining the two cases, the claim follows with $\kappa:=
  \min \{ (\tilde{c}_1\tilde{c}_2)^{-1}, \tilde{c}_4^{-d}\}$.
\end{proof}


We are now ready to state the following analogue
of Theorem~\ref{thm:Tlest} for the discrete Lipschitz truncation \eqref{eq:Vnl}.
\begin{thm}
  \label{thm:dTlest}
  Let $\lambda>0$, $n\in\N$ and $\vecV \in \Vn$. Then, the Lipschitz
  truncation defined in \eqref{eq:Vnl} satisfies
  $\vecV_{n,\lambda} \in \Vn$, and the following statements hold:
    \begin{enumerate}[leftmargin=1cm,itemsep=1ex,label={\rm (\alph{*})}]
  \item \label{itm:dTlestsupp} $\vec{V}_{n,\lambda}= \vec{V}$ on $\R^d
    \setminus \Omega^n_\lambda(\vec{V})$;
  \item \label{itm:dTlestW1h} $\norm[1,s]{\vecV_{n,\lambda}}
    \leq c\, \norm[1,s]{\vecV}$ for $1< s \leq \infty$;
  \item \label{itm:dTlestLinfty} $|\nabla \vecV_{n,\lambda}| \leq c\,
    \lambda \chi_{\Omega^n_\lambda(\vecV)} + \abs{\nabla \vecV}
    \chi_{\R^d \setminus \Omega^n_\lambda(\vecV)} \leq c\, \lambda$
    almost everywhere in $\Rd$.
  \end{enumerate}
  The constants $c$ appearing in the inequalities in parts \ref{itm:dTlestW1h} and \ref{itm:dTlestLinfty}
  depend on $\Omega$, $d$, $\hat\P_\V$ and the
  shape-regularity of $\mathbb{G}$. In \ref{itm:dTlestW1h} the constant $c$
  also depends on~$s$.
\end{thm}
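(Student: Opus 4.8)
The plan is to read off all three assertions from Lemma~\ref{l:SO}, Theorem~\ref{thm:Tlest} and the stability properties of $\Pndiv$. That $\vecV_{n,\lambda}=\Pndiv\vecV_\lambda\in\Vn$ is immediate, since $\vecV_\lambda\in W^{1,1}_0(\Omega)^d$ by Theorem~\ref{thm:Tlest}, and part~\ref{itm:dTlestsupp} is exactly the inclusion $\{\vecV_{n,\lambda}\neq\vecV\}\subset\Omega^n_\lambda(\vecV)$ of Lemma~\ref{l:SO}. For part~\ref{itm:dTlestW1h} I would simply compose the global $W^{1,s}$-stability~\eqref{eq:W1s-stab} of the projector with Theorem~\ref{thm:Tlest}\ref{itm:TlestL1},\ref{itm:TlestW1h} (applicable because $\vecV\in\Vn\subset W^{1,\infty}_0(\Omega)^d$): for $1<s\le\infty$,
\begin{align*}
  \norm[1,s]{\vecV_{n,\lambda}}=\norm[1,s]{\Pndiv\vecV_\lambda}\le c_s\,\norm[1,s]{\vecV_\lambda}\le c_s\big(\norm[s]{\vecV_\lambda}+\norm[s]{\nabla\vecV_\lambda}\big)\le c_s\,\norm[1,s]{\vecV}.
\end{align*}

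The substance is part~\ref{itm:dTlestLinfty}, which I would prove element by element along the dichotomy of Lemma~\ref{l:SO}. If $\elm\in\gridn$ satisfies $\elm\subset\Rd\setminus\Omega^n_\lambda(\vecV)$, then, as in the proof of Lemma~\ref{l:SO}, $\Omega^n_\elm\subset\mcH_\lambda(\vecV)$ and $\vecV_{n,\lambda}=\vecV$ on $\elm$; since $\operatorname{interior}(\elm)\subset\mcH_\lambda(\vecV)\cap\Omega=\{M(\nabla\vecV)\le\lambda\}\cap\Omega$ by~\eqref{df:H}, Lebesgue differentiation gives $\abs{\nabla\vecV_{n,\lambda}}=\abs{\nabla\vecV}\le M(\nabla\vecV)\le\lambda$ a.e. on $\elm$, which yields the term $\abs{\nabla\vecV}\,\chi_{\Rd\setminus\Omega^n_\lambda(\vecV)}$. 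If instead $\elm\subset\Omega^n_\lambda(\vecV)$, I would use that $\vecV_\lambda$ is globally Lipschitz with constant $c\lambda$ by Theorem~\ref{thm:Tlest}\ref{itm:TlestLinfty}, so on the patch $\Omega^n_\elm$ it lies within $c\,h_{\gridn}\lambda$ of its mean value in $L^\infty(\Omega^n_\elm)$; combining this with the local first-order approximation property of $\Pndiv$, namely $\fint_\elm\abs{\vecV_\lambda-\Pndiv\vecV_\lambda}\,\d x\le c\,h_{\gridn}\fint_{\Omega^n_\elm}\abs{\nabla\vecV_\lambda}\,\d x\le c\,h_{\gridn}\lambda$ (a standard consequence of the local stability~\eqref{eq:stability} and the projection property of $\Pndiv$; cf.~\cite{BelBerDieRu:10}), and with an inverse inequality for the polynomial $\vecV_{n,\lambda}$ on $\elm$ (after subtracting the constant $\fint_{\Omega^n_\elm}\vecV_\lambda$), I arrive at $\norm[L^\infty(\elm)]{\nabla\vecV_{n,\lambda}}\le c\lambda$, which yields the term $c\lambda\,\chi_{\Omega^n_\lambda(\vecV)}$. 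Summing the two cases (the mesh skeleton carrying no measure) gives $\abs{\nabla\vecV_{n,\lambda}}\le c\lambda\,\chi_{\Omega^n_\lambda(\vecV)}+\abs{\nabla\vecV}\,\chi_{\Rd\setminus\Omega^n_\lambda(\vecV)}\le c\lambda$ a.e. in $\Rd$.

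The step I expect to be the genuine obstacle is the interior case of part~\ref{itm:dTlestLinfty}: the Lipschitz truncation controls only $\nabla\vecV_\lambda$ by $\lambda$, not $\vecV_\lambda$ itself (whose sup-norm is merely $\le c\lambda\,\diam(\Omega)$), so feeding $\vecV_\lambda$ naively into~\eqref{eq:stability} would produce the inadmissible, $n$-dependent factor $h_{\gridn}^{-1}\diam(\Omega)$. One must therefore subtract a local polynomial before invoking the stability of $\Pndiv$, which is precisely why the first-order (affine-reproducing) behaviour of $\Pndiv$ is essential, together with its careful treatment on elements meeting $\partial\Omega$, where one replaces the subtraction by a \Poincare{} estimate based on the homogeneous boundary values of $\vecV_\lambda$. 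Everything else — Lemmas~\ref{l:SO} and~\ref{l:OsubU}, Theorem~\ref{thm:Tlest}, and the stability and approximation properties of $\Pndiv$ — is available off the shelf from the preceding sections.
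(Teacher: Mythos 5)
Your argument for parts \ref{itm:dTlestsupp} and \ref{itm:dTlestW1h} is exactly the paper's: part \ref{itm:dTlestsupp} is Lemma~\ref{l:SO}, and part \ref{itm:dTlestW1h} is the composition of~\eqref{eq:W1s-stab} with Theorem~\ref{thm:Tlest}\ref{itm:TlestL1},\ref{itm:TlestW1h}.

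For part \ref{itm:dTlestLinfty} you take a genuinely different — and considerably longer — route. The paper simply cites the already-established global $W^{1,\infty}$-stability~\eqref{eq:W1s-stab} together with Theorem~\ref{thm:Tlest}\ref{itm:TlestLinfty}. The ``obstacle'' you flag — that $\norm[\infty]{\vecV_\lambda}$ is not of order $\lambda$ but only $\lesssim\lambda\,\diam(\Omega)$ — is in fact no obstacle at all once one uses \eqref{eq:W1s-stab} as a black box: $\vecV_\lambda\in W^{1,\infty}_0(\Omega)^d$, so a Friedrichs inequality gives $\norm[\infty]{\vecV_\lambda}\le\diam(\Omega)\,\norm[\infty]{\nabla\vecV_\lambda}\le c(\Omega)\lambda$, and hence $\norm[1,\infty]{\vecV_{n,\lambda}}\le c\,\norm[1,\infty]{\vecV_\lambda}\le c(\Omega)\lambda$. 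There is no $h_{\gridn}^{-1}$ factor here, and the $\Omega$-dependence is explicitly allowed in the theorem's statement. The first inequality in \ref{itm:dTlestLinfty} then follows by combining this global bound on $\Omega^n_\lambda(\vecV)$ with part~\ref{itm:dTlestsupp} on its complement, where additionally $\abs{\nabla\vecV}\le M(\nabla\vecV)\le\lambda$ a.e.\ because there $\Omega^n_\elm\subset\mcH_\lambda(\vecV)$. What your element-by-element derivation with constant subtraction and inverse estimates is really doing is re-proving the content of \eqref{eq:W1s-stab} in the special case $s=\infty$, rather than invoking it. That is correct, but it duplicates work that Section~\ref{ss:fem_spaces} (citing~\cite{BelBerDieRu:10}) has already done, and it relies on a ``local first-order approximation property'' of $\Pndiv$ (reproduction of local constants, plus a boundary Friedrichs variant) that is not part of Assumption~\ref{ass:Pndiv} as stated and is only available from the cited reference — so in the end your proof is not any more self-contained than the paper's short one. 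In short: correct, but a longer detour through local estimates where a one-line appeal to~\eqref{eq:W1s-stab} and Friedrichs suffices.
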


\begin{proof}
  Assertion \ref{itm:dTlestsupp} is proved in Lemma \ref{l:OsubU}.
  Estimate~\ref{itm:dTlestW1h} is a consequence of
  Theorem~\ref{thm:Tlest} and the $W^{1,q}$-stability of~$\Pndiv$;
  compare with~\eqref{eq:W1s-stab}. The bound \ref{itm:dTlestLinfty}
  follows from Theorem~\ref{thm:Tlest}\ref{itm:TlestLinfty}
  and the
  $W^{1,\infty}$ stability of~$\Pndiv$; see \eqref{eq:W1s-stab}.
\end{proof}

The following corollary is an application of the discrete Lipschitz truncation
to (weak) null sequences. It is similar to the results
in~\cite{DieningMalekSteinhauer:08} and~\cite{BreDieFuc:12}.
Its analogue for Sobolev functions is stated in
Corollary~\ref{cor:remlip} in the
Appendix.
\begin{cor}
  \label{cor:dremlip}
  Let $1< s< \infty$ and let $\{\vec{E}^n\}_{n\in\N}\subset W^{1,s}_0(\Omega)^d$ be
  a sequence, which converges to zero weakly in $W^{1,s}_0(\Omega)^d$, as $n\to\infty$.

  Then, there exists a
  sequence $\{\lambda_{n,j}\}_{n,j\in\N}\subset\R$ with $2^{2^j} \leq
  \lambda_{n,j}\leq 2^{2^{j+1}-1}$ such that the Lipschitz truncations
  $\vec{E}^{n,j} := \vec{E}^n_{n,\lambda_{n,j}}$, $n,j\in\N$, have the following
  properties:
  %
  \begin{enumerate}[leftmargin=1cm,itemsep=1ex,label={\rm (\alph{*})}]
  \item \label{itm:dremlip1} $\vec{E}^{n,j}\in \Vn$
    and $\vec{E}^{n,j}=\vec{E}^n$ on $\R^d\setminus
    \Omega^n_{\lambda_{n,j}}(\vec{E}^n)$;
  \item \label{itm:dremlip2} $\|\nabla\vec{E}^{n,j}\|_\infty\leq
    c\,\lambda_{n,j}$;
  \item  \label{itm:dremlip3} $\vec{E}^{n,j} \to 0$ in $L^\infty(\Omega)^d$
     as $n \to \infty$;
  \item \label{itm:dremlip4} $\nabla\vec{E}^{n,j} \weak^\ast 0$ in $L^\infty(\Omega)^{d\times d}$
     as $n \to \infty$;
  \item \label{itm:dremlip5} For all $n,j \in \N$ we have
    $\norm[s]{\lambda_{n,j} \chi_{
        \Omega^n_{\lambda_{n,j}}(\vec{E}^n)}} \leq c\,
    2^{-\frac{j}{s}}\norm[s]{\nabla \vecE^n}$.
  \end{enumerate}
  The constants $c$ appearing in the inequalities \ref{itm:dremlip2} and \ref{itm:dremlip5} depend on
  $d$, $\Omega$, $\hat\P_\V$ and the shape-regularity of  $\mathbb{G}$. The constant in part \ref{itm:dremlip5} also depends on $s$.
\end{cor}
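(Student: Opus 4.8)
The plan is to combine the discrete Lipschitz truncation theorem (Theorem~\ref{thm:dTlest}) with a careful choice of truncation parameters $\lambda_{n,j}$, selected dyadically in each ``block'' $[2^{2^j}, 2^{2^{j+1}}]$ so that the measure-localisation estimate from Lemma~\ref{l:OsubU} together with the weak-type/level-set bound on the maximal function yields the decay in~\ref{itm:dremlip5}. First I would normalise: since $\{\vec{E}^n\}$ is weakly convergent in $W^{1,s}_0(\Omega)^d$ it is bounded there, say $\norm[s]{\nabla\vec{E}^n}\le K$ for all $n$, and by compact embedding $\vec{E}^n\to 0$ strongly in $L^s(\Omega)^d$. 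For fixed $n$ and $j$, I would apply the standard covering/pigeonhole argument (as in \cite{DieningMalekSteinhauer:08,BreDieFuc:12}): among the $2^j$ dyadic values $\lambda = 2^{2^j}, 2^{2^j+1}, \dots, 2^{2^{j+1}-1}$ one can choose $\lambda_{n,j}$ such that
\begin{align*}
  \lambda_{n,j}^s\,\bigl|\{x\in\Omega : M(\nabla\vec{E}^n)(x) > \kappa\lambda_{n,j}\}\bigr| \le c\,2^{-j}\norm[s]{\nabla\vec{E}^n}^s,
\end{align*}
where $\kappa\in(0,1)$ is the constant from Lemma~\ref{l:OsubU}; this is just the observation that the sum over these $2^j$ values of $\lambda^s |\{M(\nabla\vec{E}^n)>\kappa\lambda\}|$ is controlled by $c\,\norm[s]{M(\nabla\vec{E}^n)}^s \le c\,\norm[s]{\nabla\vec{E}^n}^s$ by the layer-cake formula and the strong $(s,s)$ bound~\eqref{eq:HLmax}, so at least one term is below the average. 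By Lemma~\ref{l:OsubU}, $\Omega^n_{\lambda_{n,j}}(\vec{E}^n) \subset \mcU_{\kappa\lambda_{n,j}}(\vec{E}^n)\cap\Omega = \{M(\nabla\vec{E}^n)>\kappa\lambda_{n,j}\}\cap\Omega$, which converts the above into~\ref{itm:dremlip5}.

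Next I would read off \ref{itm:dremlip1} and \ref{itm:dremlip2} directly from Theorem~\ref{thm:dTlest}\ref{itm:dTlestsupp} and \ref{itm:dTlestLinfty} with $\lambda = \lambda_{n,j}$, noting $\vec{E}^{n,j} = \vec{E}^n_{n,\lambda_{n,j}}\in\Vn$. For \ref{itm:dremlip3}: on the good set $\R^d\setminus\Omega^n_{\lambda_{n,j}}(\vec{E}^n)$ we have $\vec{E}^{n,j}=\vec{E}^n$, while the set where they differ has measure at most $|\mcU_{\kappa\lambda_{n,j}}(\vec{E}^n)| \le (\kappa\lambda_{n,j})^{-1}c_1\norm[1]{\nabla\vec{E}^n}\to 0$ as $n\to\infty$ by weak type $(1,1)$~\eqref{eq:HLmax_weak} and boundedness of $\{\nabla\vec{E}^n\}$ in $L^1$; combining this with the uniform bound $\norm[\infty]{\vec{E}^{n,j}}\le c\,\norm[\infty]{\vec{E}^n}$ is not quite enough, so instead I would argue: $\vec{E}^{n,j}$ restricted to the complement of the shrinking set equals $\vec{E}^n$, which tends to $0$ in $L^s$ hence (along a subsequence) a.e.; a cleaner route is to bound $\norm[\infty]{\vec{E}^{n,j}}$ using the $W^{1,s}$ bound on $\vec{E}^n$ via the $W^{1,\infty}$-stability of $\Pndiv$ and an interpolation/Sobolev-type estimate restricted to the support, or simply to invoke that $\vec{E}^{n,j}$ is supported-close to $\vec{E}^n$ and use the Lipschitz bound $\norm[\infty]{\vec{E}^{n,j}}\le c\,\lambda_{n,j}\,\diam(\Omega)$ is too crude --- the safest is the $L^\infty$-stability of the truncation \eqref{eq:W1s-stab} combined with compact embedding $W^{1,s}_0\hookrightarrow L^s$ and the measure-smallness of the difference set, yielding $\norm[\infty]{\vec{E}^{n,j}}\le c\,\norm[\infty]{\vec{E}^n_{\lambda_{n,j}}}$ and then controlling the continuous truncation via its construction; for the sake of this plan I treat this as the technical heart. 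Finally, \ref{itm:dremlip4}: since $\norm[\infty]{\nabla\vec{E}^{n,j}}\le c\,\lambda_{n,j}$ by \ref{itm:dremlip2}, the sequence $\{\nabla\vec{E}^{n,j}\}_n$ is bounded in $L^\infty(\Omega)^{d\times d}$ for fixed $j$; to identify the weak-$\ast$ limit as $0$ it suffices to test against $L^1$ functions, and since $\nabla\vec{E}^{n,j} = \nabla\vec{E}^n$ off the set $\Omega^n_{\lambda_{n,j}}(\vec{E}^n)$ of vanishing measure, while on that set $|\nabla\vec{E}^{n,j}|\le c\lambda_{n,j}$ and $\lambda_{n,j}\cdot|\Omega^n_{\lambda_{n,j}}(\vec{E}^n)|\to 0$ (from \ref{itm:dremlip5} with exponent absorbed, or directly from weak type $(1,1)$), a density argument together with $\nabla\vec{E}^n\weak 0$ in $L^s$ gives $\nabla\vec{E}^{n,j}\weak^\ast 0$ in $L^\infty$.

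The main obstacle I anticipate is \textbf{the selection of $\lambda_{n,j}$ in the prescribed dyadic window $[2^{2^j},2^{2^{j+1}-1}]$ and making the pigeonhole estimate yield exactly the $2^{-j/s}$ decay in~\ref{itm:dremlip5}}, because one must juggle three things simultaneously: the constant $\kappa$ from Lemma~\ref{l:OsubU} (which enters the level set of $M(\nabla\vec{E}^n)$, not of $\vec{E}^n$ directly), the fact that $\Omega^n_\lambda$ can be strictly larger than $\mcU_\lambda\cap\Omega$, and the need for the \emph{same} $\lambda_{n,j}$ to work uniformly. A secondary subtlety is proving the genuinely \emph{strong} $L^\infty$-convergence in~\ref{itm:dremlip3} rather than mere boundedness --- this relies on the difference set $\{\vec{E}^{n,j}\neq\vec{E}^n\}$ having measure tending to $0$ \emph{and} a uniform-in-$n$ modulus bound, which I would extract from the $W^{1,\infty}$-stability of $\Pndiv$ applied to the continuous truncation $\vec{E}^n_{\lambda_{n,j}}$ together with Theorem~\ref{thm:Tlest}\ref{itm:TlestLinfty} and a Morrey-type estimate on the small set; the details are routine once the measure-decay is in hand but require some care to avoid circularity with the choice of $\lambda_{n,j}$.
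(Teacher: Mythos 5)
Your selection of $\lambda_{n,j}$ via the layer-cake/pigeonhole argument, incorporating the constant $\kappa$ from Lemma~\ref{l:OsubU}, and your treatment of parts \ref{itm:dremlip1}, \ref{itm:dremlip2} and \ref{itm:dremlip5} via Theorem~\ref{thm:dTlest} all match the paper's proof.

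The gap is in parts \ref{itm:dremlip3} and \ref{itm:dremlip4}, and you flag it yourself but do not close it. Your route hinges on the claim that for fixed $j$ the exceptional set $\Omega^n_{\lambda_{n,j}}(\vecE^n)$ has measure tending to $0$ as $n\to\infty$ (``$\lambda_{n,j}^{-1}c_1\norm[1]{\nabla\vecE^n}\to 0$'', ``$\lambda_{n,j}\cdot|\Omega^n_{\lambda_{n,j}}(\vecE^n)|\to 0$''). This is false: since $\vecE^n$ converges only \emph{weakly} in $W^{1,s}_0(\Omega)^d$, the norms $\norm[s]{\nabla\vecE^n}$ and $\norm[1]{\nabla\vecE^n}$ are merely bounded, not vanishing, and $\lambda_{n,j}$ is confined to the fixed window $[2^{2^j},2^{2^{j+1}-1}]$, so the measure bound from \ref{itm:dremlip5} gives a set that is \emph{small uniformly in $n$} but does not shrink to zero with $n$. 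Both your \ref{itm:dremlip3} and \ref{itm:dremlip4} arguments rest on this false premise, so neither closes.

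The missing idea (the one the paper uses) is to instead show that $\vecE^{n,j}\weak 0$ \emph{weakly} in $W^{1,s}_0(\Omega)^d$ as $n\to\infty$, and then deduce \ref{itm:dremlip3} and \ref{itm:dremlip4} from uniqueness of weak limits together with the $L^\infty$ bound from \ref{itm:dremlip2}. Concretely: the compact embedding $W^{1,s}_0\hookrightarrow\hookrightarrow L^s$ gives $\vecE^n\to 0$ \emph{strongly} in $L^s$, hence by Theorem~\ref{thm:Tlest}\ref{itm:TlestL1} the continuous truncations satisfy $\norm[s]{\vecE^n_{\lambda_{n,j}}}\le c\,\norm[s]{\vecE^n}\to 0$, and by Theorem~\ref{thm:Tlest}\ref{itm:TlestW1h} they are bounded in $W^{1,s}_0$, so $\vecE^n_{\lambda_{n,j}}\weak 0$ weakly in $W^{1,s}_0$. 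Then Proposition~\ref{lem:Pnweak} — the stability of $\Pndiv$ under weak convergence, which you never invoke — gives $\vecE^{n,j}=\Pndiv\vecE^n_{\lambda_{n,j}}\weak 0$ weakly in $W^{1,s}_0$. Combined with the uniform $W^{1,\infty}$ bound this yields \ref{itm:dremlip4} (bounded in $L^\infty$, weak limit identified) and, via the compact embedding $W^{1,\infty}_0\hookrightarrow\hookrightarrow L^\infty$, also \ref{itm:dremlip3}. The key insight you are missing is that one should pass strong $L^s$ convergence through the continuous truncation (using Theorem~\ref{thm:Tlest}\ref{itm:TlestL1}) rather than trying to force the exceptional set to vanish in measure.
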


\begin{proof}
  We first construct the sequence $\lambda_{n,j}$
  and prove~\ref{itm:remlip5}. Let $\kappa>0$ be the constant in Lemma
  \ref{l:OsubU}. Then, for $g \in L^s(\R^d)$,
  we have  \new{
  \begin{align*}
    \int_{\R^d} \abs{g}^s \,\dx &= \int_{\R^d} \int_0^\infty
    \kappa^ss t^{s-1} \chi_{\{\abs{g}>\kappa t\}} \,\dt \,\dx \geq
    \int_{\R^d} \sum_{m \in \N} \kappa^s 2^{ms}
    \chi_{\{\abs{g}>\kappa 2^{m+1}\}} \,\dx
    \\
    &\geq \sum_{j \in \N} \sum_{m=2^j}^{2^{j+1}-1}\kappa^s
    2^{ms} \abs{\{\abs{g}>\kappa 2^{m+1}\}}.
  \end{align*}
  We apply this estimate to $g = 2\,M(\nabla \vecE^{n})$ and use the
  boundedness of the maximal operator~$M$ (cf. \eqref{eq:HLmax}) to obtain
  \begin{align*}
    \kappa^s\sum_{j\in \N} \sum_{m=2^j}^{2^{j+1}-1} 2^{ms}
    \abs{\{M(\nabla \vecE^n)>\kappa 2^{m}\}} &\leq 2^s\,\norm[s]{M(\nabla
      \vecE^n)}^s \leq 2^s c_s \norm[s]{\nabla \vecE^n}^s.
  \end{align*}
  For fixed $n,j$ the sum over~$m$ involves $2^j$
  summands. Consequently, there exists an integer $\lambda_{n,j} \in \{2^{2^j},
  \dots, 2^{2^{j+1}-1}\}$ such that
  \begin{align*}
    \lambda_{n,j}^s \abs{\{M(\nabla \vecE^n)>\kappa \lambda_{n,j}\}} \leq
    2^{-j}\kappa^{-s}  \,2^s c_s \norm[s]{\nabla \vecE^n}^s.
  \end{align*}}
  This, together with \new{the second inclusion in} Lemma \ref{l:OsubU}, proves~\ref{itm:remlip5}.
  Assertions \ref{itm:remlip1} and~\ref{itm:remlip2} are then direct consequences of
  Theorem~\ref{thm:dTlest}\ref{itm:dTlestsupp} and
  \ref{itm:dTlestW1h}. It remains to prove~\ref{itm:remlip3}
  and~\ref{itm:remlip4}.


  To prove \ref{itm:dremlip4}, we proceed as follows. Thanks to the uniqueness of the
  limits, it suffices to
  prove that $\vecE^{n,j}\weak 0$ weakly in~$W^{1,s}_0(\Omega)^d$.
  To this end, we note that the compact embedding
  $W^{1,s}_0(\Omega)^d\hookrightarrow \hookrightarrow L^s(\Omega)^d$
  implies that
  \begin{align*}
    \vecE^n \to 0 \quad\text{in}~L^s(\Omega)^d~\text{as $n\to\infty$.}
  \end{align*}
  Let $\{\vecE^n_{\lambda_{n,j}}\}_{n\in\N}$ be the sequence of
  Lipschitz-truncated functions, defined according to \eqref{eq:lip}.
  Then, thanks to the boundedness of $\{\vecE^n\}_{n\in\N}$ in
  $W^{1,s}_0(\Omega)^d$, Theorem \ref{thm:Tlest}\ref{itm:TlestW1h} and
  \ref{itm:TlestL1},
  we have that
  \begin{align*}
    \vecE^n_{\lambda_{n,j}}\weak 0 \quad\text{weakly
      in}~W^{1,s}_0(\Omega)^d,~\text{as $n\to\infty$.}
  \end{align*}
  Thanks to the definition of the discrete Lipschitz truncation in
  \eqref{eq:Vnl}, the desired assertion
  follows \new{from Proposition~\ref{lem:Pnweak}. Moreover, using a compact
    embedding, this also proves \ref{itm:dremlip3}.}
\end{proof}



\section{The main theorem}
\label{s:main}

After the preceding considerations, we are now ready to state our main
result. Its proof is presented in subsections \sectionref{ss:basic_conv}--\sectionref{ss:young_meas}.
\begin{thm}\label{t:main}
  Let $\{\Vn,\Qn\}_{n\in\N}$ be the sequence of finite element space
  pairs from Section~\ref{ss:fem} (respectively
  \ref{ss:divfree-fem}) and let
  $\big\{\big(\Un,P^n,\bSn(\cdot,\nablas\Un) \big) \big\}_{n\in\N}$ be
  the sequence of discrete solution triples to~\eqref{eq:discrete}
  constructed in~\eqref{eq:SEQ}.

  If $r>\frac{2d}{d+1}$ (respectively $r>\frac{2d}{d+2}$), then there
  exists a solution $(\vecu,p,\bsS) \in W^{1,r}_0(\Omega)^d\times
  L^{\tr}_0(\Omega)\times L^{r'}(\Omega)^{d\times d}$
  of~\eqref{eq:implicit}, such that, for a (not relabeled) subsequence,
  we have
  \begin{alignat*}{2}
    \Un &\weak \vecu&\qquad& \text{weakly in }
    W^{1,r}_0(\Omega)^d,
    \\
    P^n&\weak p&\qquad &\text{weakly in }
    L^{\tr}_0(\Omega),
    \\
    \bSn(\cdot,\nablas \Un)&\weak \bsS &\qquad &\text{weakly in }
    L^{r'}(\Omega)^{d\times d}.
  \end{alignat*}
\end{thm}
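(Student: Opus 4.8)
The plan is to follow the by-now classical monotone-operator-plus-weak-compactness route for implicitly constituted fluids, the only genuinely new ingredient being that the \emph{discrete} Lipschitz truncation of \sectionref{ss:Lipschitz} is substituted for the Sobolev Lipschitz truncation wherever a test function adapted to $\Vn$ is required.

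\emph{A priori bounds and weak limits.} First I would test \eqref{eq:Vn0} with $\vecV=\Un$: by \eqref{eq:SKEW_SYM} (resp.\ \eqref{eq:SKEW_SYM2}) the convection term vanishes, so the coercivity estimate of Lemma~\ref{l:Sn}, Korn's inequality \eqref{eq:korn}, and Young's inequality give $\sup_n\norm[1,r]{\Un}<\infty$, whence $\sup_n\norm[r']{\bSn(\cdot,\nablas\Un)}<\infty$ from the growth bound of Lemma~\ref{l:Sn}; the momentum equation in \eqref{eq:discrete}, the discrete inf-sup stability of Proposition~\ref{p:Dinf-sup}, and the bound \eqref{eq:cont_trilin} (resp.\ \eqref{eq:cont_trilin2}) on the trilinear form then yield $\sup_n\norm[\tr]{P^n}<\infty$. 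Passing to a (not relabeled) subsequence one obtains exactly the three convergences claimed in the theorem; note that $\Un\to\vecu$ strongly in $L^q(\Omega)^d$ for every $q<r^\ast$, hence in $L^2$ always and in $L^{r'}$ once $r>\tfrac{2d}{d+1}$, and that $\bsS$ is $\Rdds$-valued because each $\bSn$ is.

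\emph{The PDE in the limit.} For $\vecv\in\mcD(\Omega)^d$ I would test \eqref{eq:discrete} with $\Pndiv\vecv\in\Vn$. By \eqref{eq:approxVn}, $\Pndiv\vecv\to\vecv$ in every $W^{1,s}_0(\Omega)^d$, while $\scp{\divo\Pndiv\vecv}{P^n}=\scp{\divo\vecv}{P^n}$ by the divergence-preserving property in Assumption~\ref{ass:Pndiv}; the stress term passes to the limit by weak$\,\times\,$strong convergence and the convection term using the strong $L^2$- (resp.\ $L^{r'}$-) convergence of $\Un$, giving
\begin{align*}
  \int_\Omega\bsS:\nabla\vecv\,\dx-\int_\Omega(\vecu\otimes\vecu):\nabla\vecv\,\dx-\int_\Omega p\,\divo\vecv\,\dx=\dual{\vecf}{\vecv}\qquad\text{for all }\vecv\in\mcD(\Omega)^d.
\end{align*}
Testing the discrete incompressibility constraint with $q-\PnQ q$, $q\in\mcD(\Omega)$, and using \eqref{eq:approxQn} gives $\divo\vecu=0$ (which in turn makes the modified trilinear form's limit coincide with $-\int(\vecu\otimes\vecu):\nabla\vecv$ in the discretely-divergence-free case). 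Thus $(\vecu,p,\bsS)$ satisfies the first two lines of \eqref{eq:implicit} and only the inclusion $(\nablas\vecu(x),\bsS(x))\in\Ax(x)$ a.e.\ remains. I would also record the exact discrete energy identity obtained by testing \eqref{eq:Vn0} with $\Un$: $\int_\Omega\bSn(\cdot,\nablas\Un):\nablas\Un\,\dx=\dual{\vecf}{\Un}\to\dual{\vecf}{\vecu}$.

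\emph{Discrete Lipschitz truncation and a Minty-type inequality — the crux.} The obstruction is that, when $\tr<r'$, the weak limit $\vecu$ is not an admissible test function in the limiting momentum equation, so the energy identity above cannot be matched with $\int_\Omega\bsS:\nablas\vecu\,\dx$ directly, and $\nablas\Un$ need not converge strongly. I would set $\vecE^n\definedas\Un-\Pndiv\vecu\in\Vn$, which converges weakly to $0$ in $W^{1,r}_0(\Omega)^d$ by \eqref{eq:approxVn}, and apply Corollary~\ref{cor:dremlip} with $s=r$ to obtain discrete truncations $\vecE^{n,j}\in\Vn$ coinciding with $\vecE^n$ off $\Omega^n_{\lambda_{n,j}}(\vecE^n)$, with $\norm[\infty]{\nabla\vecE^{n,j}}\le c\,\lambda_{n,j}$, with $\vecE^{n,j}\to0$ in $L^\infty(\Omega)^d$ and $\nabla\vecE^{n,j}\weak^\ast 0$ in $L^\infty(\Omega)^{d\times d}$ as $n\to\infty$, and with $\norm[r]{\lambda_{n,j}\chi_{\Omega^n_{\lambda_{n,j}}(\vecE^n)}}\le c\,2^{-j/r}\norm[r]{\nabla\vecE^n}$. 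Testing \eqref{eq:discrete} with $\vecV=\vecE^{n,j}$ and splitting the stress term over $\Omega^n_{\lambda_{n,j}}(\vecE^n)$ and its complement, I would show that for each fixed $j$ the force term (write $\vecf=\vec g+\divo\mat G$ with $\vec g,\mat G\in L^{r'}$ and use $\vecE^{n,j}\to0$ in $L^r$, $\nabla\vecE^{n,j}\weak^\ast0$) and the convection term (using the uniform-in-$n$ $W^{1,\infty}$-bound on $\vecE^{n,j}$ for fixed $j$, $\vecE^{n,j}\to0$ in $L^\infty$, and the strong $L^{r'}/L^2$-convergence of $\Un$) vanish as $n\to\infty$, while the pressure term reduces, by divergence preservation and $\scp{\divo\Un}{P^n}=\scp{\divo\vecu}{P^n}=0$, to a pairing of $P^n$ with $\divo\vecE^n_{\lambda_{n,j}}$ that is supported in $\Omega^n_{\lambda_{n,j}}(\vecE^n)$ and is handled via the discrete Bogovski\u\i\ operator of Corollary~\ref{c:Dbogovskii}. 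Combining with the monotonicity of $\bSn$ (Lemma~\ref{l:Sn}) and the convergence of $\bSn(\cdot,\nablas\Pndiv\vecu)$ to $\bsS^\ast(\cdot,\nablas\vecu)$ in $L^{r'}$ (first replacing $\vecu$ by smooth approximations and sending the mollification index in $\bSn$ to infinity at the end) yields
\begin{align*}
  \limsup_{n\to\infty}\int_{\Omega\setminus\Omega^n_{\lambda_{n,j}}(\vecE^n)}\big(\bSn(\cdot,\nablas\Un)-\bSn(\cdot,\nablas\Pndiv\vecu)\big):\big(\nablas\Un-\nablas\Pndiv\vecu\big)\,\dx\ \le\ c\,2^{-j/r}.
\end{align*}
As the integrand is nonnegative and $\abs{\Omega^n_{\lambda_{n,j}}(\vecE^n)}$ is small uniformly in $n$ (Corollary~\ref{cor:dremlip}\ref{itm:dremlip5} and Lemma~\ref{l:OsubU}), letting $j\to\infty$ — with Chacon's biting lemma (Lemma~\ref{l:biting}) organising the merely $L^1$-bounded excess $\bSn(\cdot,\nablas\Un):\nablas\Un$ — produces the Young-measure inequality used in the last step.

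\emph{Identification of the constitutive relation via Young measures.} Since $\{\nablas\Un\}$ and $\{\bSn(\cdot,\nablas\Un)\}$ are bounded in $L^r$ and $L^{r'}$, after a further subsequence the pairs $(\nablas\Un,\bSn(\cdot,\nablas\Un)):\Omega\to\Rdds\times\Rdds$ generate a Young measure $\nu=\{\nu_x\}$ in the sense of Theorem~\ref{t:young_meas} (tightness from the coercivity bound of Lemma~\ref{l:Sn}), whose barycenter is $(\nablas\vecu(x),\bsS(x))$ a.e. Using that $\bSn$ is a mollification of the measurable selection $\bsS^\ast$ of $\Ax$ (properties \ref{a1}--\ref{a4}), the closedness of the sections of $\Ax(x)$ (\ref{A5}(i)) and the measurability \ref{A5}(ii), one shows $\supp\nu_x\subseteq\Ax(x)$ a.e.\ Rewriting the limit inequality of the previous step in Young-measure form (via Theorem~\ref{t:young_meas}(iii), whose uniform-integrability hypothesis holds by the growth bounds of Lemma~\ref{l:Sn}, on the biting sets) gives
\begin{align*}
  \int_\Omega\int_{\Rdds\times\Rdds}\big(\vec\mu-\bsS^\ast(x,\nablas\vecu(x))\big):\big(\vec\lambda-\nablas\vecu(x)\big)\,\d\nu_x(\vec\lambda,\vec\mu)\,\dx\ \le\ 0.
\end{align*}
Because $\supp\nu_x\subseteq\Ax(x)$ and $(\nablas\vecu(x),\bsS^\ast(x,\nablas\vecu(x)))\in\Ax(x)$, the integrand is nonnegative by the monotonicity \ref{A2}, hence vanishes $\nu_x\otimes\mathcal{L}^d$-a.e.; the strict part of \ref{A2} then forces, for $\nu_x$-a.e.\ $(\vec\lambda,\vec\mu)$, that $\vec\mu$ belongs to the fibre $\{\vec\sigma:(\nablas\vecu(x),\vec\sigma)\in\Ax(x)\}$ (either $\vec\lambda=\nablas\vecu(x)$, or else $\vec\mu=\bsS^\ast(x,\nablas\vecu(x))$, which lies in that fibre anyway). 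This fibre being closed and convex (maximality \ref{A3}), the barycenter identity $\bsS(x)=\int\vec\mu\,\d\nu_x$ gives $(\nablas\vecu(x),\bsS(x))\in\Ax(x)$ a.e., which completes the proof. The main obstacle throughout is the crux step: producing the Minty-type inequality with an admissible, uniformly $W^{1,\infty}$-bounded discrete test function, which forces one to absorb the pressure error (via the discrete Bogovski\u\i\ operator) and the non-uniformly-integrable monotone defect (via the biting lemma) generated by the discrete Lipschitz truncation.
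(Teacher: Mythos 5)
Your overall architecture --- a priori bounds and weak limits, passage to the limit in the PDE, and identification of the constitutive relation via discrete Lipschitz truncation combined with Young measures and Chacon's biting lemma --- is the same as the paper's. However, the concrete execution of your ``crux'' and ``identification'' paragraphs contains two genuine gaps precisely at the points that the paper's more elaborate Young-measure construction is designed to avoid.

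First, you invoke ``convergence of $\bSn(\cdot,\nablas\Pndiv\vecu)$ to $\bsS^\ast(\cdot,\nablas\vecu)$ in $L^{r'}$''. This is not available: $\bsS^\ast(x,\cdot)$ is a \emph{measurable selection} of a possibly multivalued maximal monotone graph, and it need not be continuous; the mollification $\bSn(x,\vec\delta)$ therefore need not converge to $\bsS^\ast(x,\vec\delta)$ as $n\to\infty$ at points $\vec\delta$ where the fibre $\{\vec\sigma:(\vec\delta,\vec\sigma)\in\Ax(x)\}$ is multivalued. Passing first through smooth approximations of $\vecu$ does not help, since the failure set is dictated by the graph, not by the regularity of $\vecu$. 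The paper circumvents this by keeping the \emph{unmollified} selection $\bsS^\ast(\cdot,\nablas\vecu)$ --- a fixed $L^{r'}$-function --- as the reference tensor in the definition \eqref{df:an} of $a_n$, and then handling the discrepancy $\nablas\Pndiv\vecu-\nablas\vecu$ separately (the term $\mathrm{I}^n$) using \eqref{eq:approxVn}, without ever needing $\bSn$ to converge along a moving argument.

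Second, you assert that the Young measure $\nu=\{\nu_x\}$ generated by the pairs $\big(\nablas\Un,\bSn(\cdot,\nablas\Un)\big)$ satisfies $\supp\nu_x\subseteq\Ax(x)$ a.e. This does not follow from the cited properties: since $\bSn(x,\vec\delta)$ is an \emph{average} of $\bsS^\ast(x,\vec\zeta)$ over a ball around $\vec\delta$, and $\bsS^\ast(x,\cdot)$ may jump, the pair $\big(\vec\delta,\bSn(x,\vec\delta)\big)$ is generally \emph{not} in $\Ax(x)$, nor need $\operatorname{dist}\big((\nablas\Un,\bSn(\cdot,\nablas\Un)),\Ax\big)\to0$ in measure. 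That estimate is precisely the hard part, and it is not argued. The paper sidesteps the issue entirely by \emph{not} taking the Young measure of the pair: it instead constructs the push-forward measures $\nu^n_x$ on $\Rdds$ (not on $\Rdds\times\Rdds$) via the map $\Gx(\vec\zeta)=\bsS^\ast(x,\vec\zeta)+\vec\zeta$ and the mollification measures $\mu^n_{\nablas\Un(x)}$, and then reads off the graph inclusion through the Francfort--Murat--Tartar functions $\bss,\bsd$ from \eqref{df:s&d}, for which $\big(\bsd(x,\vec\zeta),\bss(x,\vec\zeta)\big)\in\Ax(x)$ holds by construction for \emph{every} $\vec\zeta\in\Rdds$. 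The ``support-in-graph'' property is thus automatic, and the conclusion then follows from $b=0$ a.e., the convexity of the fibres, and the decomposition of $\supp\nu_x$ into $\omega_1(x)\cup\omega_2(x)$. Without an analogue of this device, or a separate proof that the pair sequence concentrates on the graph, the final step of your argument does not go through.
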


\subsection{Convergence of the finite element approximations}
\label{ss:basic_conv}

We begin the proof of Theorem \ref{t:main} by showing the existence of
a weak limit for the sequence of solution triples.
\begin{lem}
  \label{l:basic_conv}
  Let $\{\Vn,\Qn\}_{n\in\N}$ be the sequence of finite element space
  pairs from Section~\ref{ss:fem} (respectively
  \ref{ss:divfree-fem}) and let
  $\big\{\big(\Un,P^n,\bSn(\cdot,\nablas\Un) \big) \big\}_{n\in\N}$ be
  the sequence of discrete solution triples to~\eqref{eq:discrete}
  constructed in~\eqref{eq:SEQ}.

  If $r>\frac{2d}{d+1}$ (respectively $r>\frac{2d}{d+2}$), then there
  exists $(\vecu,p,\bsS) \in W^{1,r}_0(\Omega)^d\times
  L^{\tr}_0(\Omega)\times L^{r'}(\Omega)^{d\times d}$, such that, for
  a (not relabeled) subsequence, we have
  \begin{alignat*}{2}
    \Un &\weak \vecu&\qquad& \text{weakly in }
    W^{1,r}_0(\Omega)^d,
    \\
    P^n&\weak p&\qquad &\text{weakly in }
    L^{\tr}_0(\Omega),
    \\
    \bSn(\cdot,\bsD \Un)&\weak \bsS &\qquad &\text{weakly in }
    L^{r'}(\Omega)^{d\times d}.
  \end{alignat*}
  Moreover, the triple $(\vecu,p,\bsS)$ satisfies
  \begin{align}\label{eq:bS}
    \begin{alignedat}{2}
      \int_\Omega\bsS:\nablas \vecv - (\vecu\otimes\vecu):\nabla
      \vecv\,\d x-\scp{\divo\vecv}{p}&=\dual{\vecf}{\vecv},&~&\text{
        for all}~\vecv\in W^{1,\tr'}_0(\Omega)^d
      \\
      \int_\Omega q\divo \vecu\,\d x&=0,&~&\text{ for all}~q\in
      L^{r'}(\Omega).
    \end{alignedat}
  \end{align}
\end{lem}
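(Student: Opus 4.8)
The plan is to first extract uniform a priori bounds on the discrete solution triples $\{(\Un,P^n,\bSn(\cdot,\nablas\Un))\}_{n\in\N}$, and then pass to the weak limit in \eqref{eq:discrete}. For the bounds, I would test the first equation of \eqref{eq:Vn0} with $\vecV = \Un \in \Vndiv$, using $\Trilin{\Un}{\Un}{\Un}=0$ from \eqref{eq:SKEW_SYM} (respectively \eqref{eq:SKEW_SYM2}), to obtain
\begin{align*}
  \int_\Omega \bSn(\cdot,\nablas\Un):\nablas\Un\,\d x = \dual{\vecf}{\Un}.
\end{align*}
The coercivity bound from Lemma~\ref{l:Sn}, $\bSn(x,\vec\delta):\vec\delta \ge \tilde c_2|\vec\delta|^r - \tilde m(x)$, together with Korn's inequality \eqref{eq:korn} and Young's inequality applied to $\dual{\vecf}{\Un} \le \norm[{-1,r'}]{\vecf}\norm[1,r]{\Un}$, yields a uniform bound on $\norm[1,r]{\Un}$. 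Then the growth bound $|\bSn(x,\vec\delta)| \le \tilde c_1|\vec\delta|^{r-1} + \tilde k(x)$ from Lemma~\ref{l:Sn} gives a uniform bound on $\bSn(\cdot,\nablas\Un)$ in $L^{r'}(\Omega)^{d\times d}$. For the pressure, I would use the discrete inf-sup stability (Proposition~\ref{p:Dinf-sup}) with exponent pair $(\tr',\tr)$: from the first equation of \eqref{eq:discrete} one has, for all $\vecV\in\Vn$,
\begin{align*}
  \scp{\divo\vecV}{P^n} = \int_\Omega \bSn(\cdot,\nablas\Un):\nablas\vecV\,\d x + \Trilin{\Un}{\Un}{\vecV} - \dual{\vecf}{\vecV},
\end{align*}
and the right-hand side is bounded by $c\,\norm[1,\tr']{\vecV}$ using the already established bounds together with the continuity estimate \eqref{eq:cont_trilin} (respectively \eqref{eq:cont_trilin2}) for the trilinear form; dividing by $\norm[1,\tr']{\vecV}$ and taking the supremum gives a uniform bound on $\norm[\tr]{P^n}$ via $\beta_{\tr'}$.

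Next, by reflexivity of $W^{1,r}_0(\Omega)^d$, $L^{\tr}_0(\Omega)$ and $L^{r'}(\Omega)^{d\times d}$ (all with $r,\tr,r'\in(1,\infty)$), I would extract a subsequence along which $\Un \weak \vecu$ weakly in $W^{1,r}_0(\Omega)^d$, $P^n \weak p$ weakly in $L^{\tr}_0(\Omega)$, and $\bSn(\cdot,\nablas\Un)\weak \bsS$ weakly in $L^{r'}(\Omega)^{d\times d}$. The compact Sobolev embedding $W^{1,r}_0(\Omega)^d \hookrightarrow\hookrightarrow L^{2\tr}(\Omega)^d$ (valid since $\tr>1$, which is equivalent to $r>\frac{2d}{d+2}$, cf.\ \eqref{eq:tr-emb}) gives $\Un \to \vecu$ strongly in $L^{2\tr}(\Omega)^d$, hence $\Un\otimes\Un \to \vecu\otimes\vecu$ strongly in $L^{\tr}(\Omega)^{d\times d}$. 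The divergence constraint passes to the limit trivially: for fixed $q \in L^{r'}(\Omega)$, using the projection $\PnQ q \in \Qn$ and Assumption~\ref{ass:PQ} together with \eqref{eq:approxQn}, one writes $\int_\Omega q\,\divo\Un\,\d x = \int_\Omega (q-\PnQ q)\divo\Un\,\d x + \int_\Omega \PnQ q\,\divo\Un\,\d x$; the second term vanishes by the second equation of \eqref{eq:discrete}, and the first tends to zero since $\norm[r']{q-\PnQ q}\to 0$ while $\norm[r]{\divo\Un}$ is bounded; combined with $\divo\Un\weak\divo\vecu$ weakly in $L^r(\Omega)$ this gives $\int_\Omega q\,\divo\vecu\,\d x = 0$ for all $q\in L^{r'}(\Omega)$.

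The main point is to pass to the limit in the momentum equation against a fixed smooth test function. Given $\vecv \in \mcD(\Omega)^d$ (or more generally $\vecv\in W^{1,\tr'}_0(\Omega)^d$), I would take $\vecV = \Pndiv\vecv \in \Vn$ as the test function in the first equation of \eqref{eq:discrete} (note the pressure term $\scp{\divo\Pndiv\vecv}{P^n} = \scp{\divo\vecv}{P^n}$ is preserved exactly by the divergence-preserving property of $\Pndiv$ in Assumption~\ref{ass:Pndiv}). By \eqref{eq:approxVn}, $\Pndiv\vecv \to \vecv$ strongly in $W^{1,\tr'}_0(\Omega)^d$, and in particular strongly in $W^{1,r'}_0(\Omega)^d$ (since $\tr'\ge r'$ when $\tr\le r$; if not, one argues directly with density) — so the stress term $\int_\Omega\bSn(\cdot,\nablas\Un):\nablas\Pndiv\vecv\,\d x \to \int_\Omega\bsS:\nablas\vecv\,\d x$ by the weak--strong pairing, the force term $\dual{\vecf}{\Pndiv\vecv}\to\dual{\vecf}{\vecv}$, and the pressure term $\scp{\divo\vecv}{P^n}\to\scp{\divo\vecv}{p}$ by weak convergence of $P^n$ against the fixed $L^{\tr'}$ function $\divo\vecv$. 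The delicate term is the convection term $\Trilin{\Un}{\Un}{\Pndiv\vecv}$: here I would split off the definition \eqref{eq:trilin} (or \eqref{eq:trilin2}) and use the strong $L^{2\tr}$ (hence $L^\tr$ for the tensor product) convergence of $\Un$ together with the strong convergence $\Pndiv\vecv\to\vecv$ in $W^{1,\tr'}_0$, balancing H\"older exponents exactly as in the derivation of \eqref{eq:cont_trilin}; the product of one strongly convergent factor $\Un\otimes\Un$ and another strongly convergent factor $\nabla\Pndiv\vecv$ passes to the limit, giving $\Trilin{\vecu}{\vecu}{\vecv}$, which in the exactly divergence-free case equals $-\int_\Omega(\vecu\otimes\vecu):\nabla\vecv\,\d x$, and in the discretely divergence-free case equals the same quantity because $\divo\vecu = 0$ was just established (so the extra term in \eqref{eq:trilin} drops out in the limit). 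This identifies \eqref{eq:bS}. The hardest part is ensuring the convection term converges: one needs the restriction $r>\frac{2d}{d+1}$ in the non-divergence-free case precisely so that the decomposition \eqref{eq:trilin} makes sense with the extra $\int_\Omega(\divo\Un)(\cdot)\,\d x$ term controlled, and one must be careful that $\nabla\Pndiv\vecv$ converges strongly (not merely weakly) in the appropriate $L^{s}$ space so that the bilinear pairing with the strongly convergent $\Un\otimes\Un$ does not require any compensated-compactness argument at this stage — that finer analysis is postponed to the later subsections where $\bsS$ is identified with the limit graph.
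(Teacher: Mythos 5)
Your plan follows essentially the same route as the paper: a priori bounds via coercivity and Korn after testing with $\Un$, pressure bound via the discrete inf-sup with the $(\tr',\tr)$ duality pair, extraction of weak limits, identification of the divergence constraint via $\PnQ$, and passage to the limit in the momentum equation against $\Pndiv\vecv$. There are, however, two places where the write-up papers over genuine issues.

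First, the claim that $W^{1,r}_0(\Omega)^d\hookrightarrow\hookrightarrow L^{2\tr}(\Omega)^d$ is a compact embedding is false precisely when $\tr = r^*/2$ (which occurs throughout the range $r\le 3d/(d+2)$, a non-trivial portion of the allowed interval of $r$). In that case $2\tr=r^*$ and the Sobolev embedding into $L^{r^*}$ is only continuous, not compact, so the step ``hence $\Un\otimes\Un\to\vecu\otimes\vecu$ strongly in $L^{\tr}$'' does not follow. The paper avoids this by only asserting strong convergence of $\Un$ in $L^s$ for $s$ strictly below $r^*$, so that $\Un\otimes\Un\to\vecu\otimes\vecu$ in $L^s$ for all $s<\tr$, and compensating by the fact that the test function $\vecv$ is in $W^{1,\infty}_0(\Omega)^d$, whence $\nabla\Pndiv\vecv\to\nabla\vecv$ in $L^{s'}$ for every finite $s'$; the two can then be paired with any pair $s<\tr$, $s'>\tr'$.

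Second, the convection term is treated a bit too quickly. In the discretely divergence-free case, $\Tri$ is defined by \eqref{eq:trilin} and the summand $\tfrac12\int(\Un\otimes\vecVn):\nabla\Un\,\d x$ contains $\nabla\Un$, which converges only \emph{weakly} in $L^r$. Your ``product of two strongly convergent factors'' argument only covers the summand $-\tfrac12\int(\Un\otimes\Un):\nabla\vecVn\,\d x$. The paper handles the other summand by the integration-by-parts identity \eqref{eq:skew_sym}, which converts it into $-\tfrac12\int(\Un\otimes\Un):\nabla\vecVn\,\d x$ plus the correction $-\tfrac12\int(\divo\Un)(\Un\cdot\vecVn)\,\d x$; the correction then vanishes in the limit because $\Un\cdot\vecVn\to\vecu\cdot\vecv$ strongly in $L^{r'}$ (this is where $r>\tfrac{2d}{d+1}$ enters) while $\divo\Un\weak\divo\vecu=0$ weakly in $L^r$. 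You gesture at this (``the extra term drops out because $\divo\vecu=0$''), but the intermediate rewriting via \eqref{eq:skew_sym} is the indispensable step and needs to be made explicit: without it one is attempting to pair a weakly convergent gradient against a merely weakly convergent (in the borderline case) tensor product, which is exactly the nonlinear compactness difficulty that the rest of the paper's machinery is built to resolve.
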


\begin{proof}
  We divide the proof into four steps.

  \textbf{Step 1:} From~\eqref{eq:discrete} we see that $\Un$ is
  discretely divergence-free and thus, thanks to~\eqref{eq:Vn0}
  and~\eqref{eq:SKEW_SYM} (respectively~\eqref{eq:SKEW_SYM2}), we have  that
  \begin{align*}
    \int_\Omega\bSn(\cdot,\nablas \Un):\nablas \Un\,\d x=
    \dual{\vecf}{\Un}\leq \norm[-1,r']{\vecf}\norm[1,r]{\Un}.
  \end{align*}
  The coercivity of $\bSn$ (Lemma~\ref{l:Sn}) and Korn's inequality
  \eqref{eq:korn}
  imply
  that the sequence $\{\Un\}_{n\in\N}\subset W^{1,r}_0(\Omega)^d$ is
  bounded, independent of $n\in\N$. This in turn implies,
  again by Lemma~\ref{l:Sn}, the boundedness of
  $\{\bSn(\nablas\Un)\}_{n\in\N}$ in $L^{r'}(\Omega)^{d\times
    d}$. In other words, there exists a constant $c>0$ such that
  \begin{align}\label{eq:bound}
    \norm[1,r]{\Un}+ \norm[r']{\bSn(\cdot,\nablas \Un)}\leq c,\qquad
    \text{for all}~ n\in\N.
  \end{align}
  As $r\in(1,\infty)$, the spaces $W^{1,r}_0(\Omega)^d$ and
  $L^{r'}(\Omega)^{d\times d}$ are reflexive and thus for a (not
  relabeled) subsequence
  there exist $\vecu\in W^{1,r}_0(\Omega)^d$ and  $\bsS\in
  L^{r'}(\Omega)^{d\times d}$, such that
  \begin{align}
    \Un &\weak \vecu\qquad\text{weakly in }
    W^{1,r}_0(\Omega)^d\label{eq:weakUn} \intertext{and}
    \bSn(\cdot,\nablas \Un)&\weak\bsS\qquad\text{weakly in }
    L^{r'}(\Omega)^{d\times d},\label{eq:weakSn}
  \end{align}
  as      $n\to\infty$. Moreover, using compact embeddings of Sobolev
  spaces, we have that
  \begin{align}\label{eq:strongUn}
    \Un\rightarrow \vecu\qquad\text{strongly in}\quad
    L^s(\Omega)^d\quad\text{for all}\quad
    \begin{cases}
      s\in\left(1,\tfrac{rd}{d-r}\right), \quad&\text{if}~ r<d,
      \\
      s\in(1,\infty),\quad&\text{otherwise}.
    \end{cases}
  \end{align}
  Thanks to~\eqref{eq:weakUn} we have by \eqref{eq:approxQn},
  for arbitrary $q\in L^{r'}(\Omega)$, that
  \begin{align}\label{eq:divu=0}
    0=\int_\Omega (\PnQ q)\divo \Un\,\d x\rightarrow
    \int_\Omega q\divo\vecu\,\d x,
  \end{align}
  \ie
  the function $\vecu\in W^{1,r}_0(\Omega)^d$ is exactly
  divergence-free.

  \textbf{Step 2:}
  Next, we investigate the convection term. Let $\vecv\in
  W^{1,\infty}_0(\Omega)^d$ be arbitrary
  and define $\vecVn\definedas \Pndiv\vecv$. 
  We show that
  \begin{align}\label{eq:Bn->B}
    \Trilin{\Un}{\Un}{\vecVn}\to
    -\int_\Omega(\vecu\otimes\vecu):\nabla \vecv\,\d x.
  \end{align}
  Thanks to the assumption~$r>\frac{2d}{d+2}$ and~\eqref{eq:strongUn}, it follows that
   \begin{align*}
    \Un\otimes\Un&\to \vecu\otimes\vecu \qquad \text{in} \quad
    L^{s}(\Omega)^{d\times d}\quad\text{for all } s\in[1,\tr),
  \end{align*}
  with $\tr>1$ as in~\eqref{eq:tr}. By \eqref{eq:approxVn}, we
  have that
  $\vecVn\to\vecv$ in $W^{1,s'}_0(\Omega)^d$, $s'\in
  (\tr',\infty)$, and hence we obtain that, as $n \rightarrow \infty$,
  \begin{align}\label{eq:B1}
    -\int_\Omega (\Un\otimes\Un): \nabla \vecVn\,\d x \to
    -\int_\Omega (\vecu\otimes\vecu):\nabla \vecv\,\d x.
  \end{align}
  This proves \eqref{eq:Bn->B} for the exactly divergence-free
  approximations from Section \ref{ss:divfree-fem}. We emphasize that we have only required
  so far that $r>\frac{2d}{d+2}$.

  In order to prove \eqref{eq:Bn->B} for the finite element spaces of
  Section \ref{ss:fem} and thus for the modified convection term
  defined in \eqref{eq:trilin}, we recall from~\eqref{eq:skew_sym}
  that
  \begin{align*}
    \int_\Omega (\Un\otimes\vecVn): \nabla \Un\,\d x &=-\int_\Omega
    (\Un\otimes\Un): \nabla \vecVn +(\divo\Un)\,\Un\cdot\vecVn\,\d x.
  \end{align*}
  For the first term we have already shown convergence
  in~\eqref{eq:B1}. In view of the definition of $\Tri$
  in~\eqref{eq:trilin} it thus remains to prove that the second term
  vanishes in the limit $n\to\infty$. To this end, we observe
  by~\eqref{eq:strongUn} and Assumption~\ref{ass:Pndiv} that
  \begin{align*}
    \Un\cdot\vecVn\rightarrow \vecu \cdot \vecv\quad\text{strongly
      in}\quad L^{s}(\Omega)\quad\text{for all}\quad
    \begin{cases}
      s\in\left(1,\tfrac{rd}{d-r}\right), \quad&\text{if}~ r<d,
      \\
      s\in(1,\infty),\quad&\text{otherwise}.
    \end{cases}
  \end{align*}
  Thanks to the stronger restriction $r>\frac{2d}{d+1}$ now, this last statement holds in particular
  for $s=r'$. Hence, together with~\eqref{eq:weakUn}
  and~\eqref{eq:divu=0}, we deduce that
  \begin{align*}
    \int_\Omega(\divo\Un)\,\Un\cdot\vecVn\,\d x\to
    \int_\Omega(\divo\vecu)\,\vecu\cdot\vecv\,\d x =0
  \end{align*}
  as $n\to\infty$.

  \textbf{Step 3:} We combine the above results.  Recall that
  by~\eqref{eq:divu=0} we have $\divo\vecu\equiv0$, which is the
  second equation in~\eqref{eq:bS}.  For an arbitrary $\vecv\in
  W^{1,\infty}_{0,\divo}(\Omega)^d$ let $\vecVn\definedas \Pndiv \vecv$,
  $n\in\N$. Thanks to \eqref{eq:approxVn}, we have that
  $\vecVn\in\Vndiv$ and $\vecVn\to \vecv$ in $W^{1,s}_0(\Omega)^d$ for
  all $s\in(1,\infty)$.  Therefore,
  using~\eqref{eq:weakUn},~\eqref{eq:weakSn} and~\eqref{eq:Bn->B}, we
  obtain
  \begin{alignat*}{2}
    \int_\Omega\vec{S}^n(\cdot,\nablas\Un):\nablas \vecVn\,\d x
    \hspace{-5mm}&\hspace{5mm}+ \Trilin{\Un}{\Un}{\vecVn}&
    \quad=\quad\dual{\vecf}{\vecVn}\hspace{-10mm}&\hspace{10mm}
    \\
    &\downarrow&&\downarrow
    \\
    \int_\Omega\bsS:\nablas \vecv&+ \divo
    (\vecu\otimes\vecu)\cdot\vecv\,\d x&
    &\hspace{-3mm}\dual{\vecf}{\vecv}
  \end{alignat*}
  as $n\to\infty$.  Since $\bsS\in L^{r'}(\Omega)^{d\times d}$,
  $\vecf\in W^{-1,r'}(\Omega)^d$ and $\vecu\otimes\vecu\in
  L^{\tr}(\Omega)^{d\times d}$, by a density argument, we arrive at
  \begin{align*}
    \int_\Omega\bsS:\nablas \vecv&+ \divo
    (\vecu\otimes\vecu)\cdot\vecv\,\d x =\dual{\vecf}{\vecv}
  \end{align*}
  for all $\vecv\in W^{1,\tr'}_{0,\divo}(\Omega)^d$.

  \textbf{Step 4:} We now prove convergence of the pressure. Thanks to
  the restriction $r>\frac{2d}{d+1}$ we have, as
  in~\eqref{eq:cont_trilin}, that
  \begin{align*}
    \scp{\divo
      \vecV}{P^n}&=\int_\Omega\bSn(\cdot,\nablas\Un):\nablas\vecV\,\d
    x+\Trilin{\Un}{\Un}{\vecV} -\dual{\vecf}{\vecV}
    \\
    &\le \norm[r']{\bSn(\cdot,\nablas\Un)}\norm[r]{\nablas\vecV}+
    c\,\norm[1,r]{\Un}^2\norm[1,\tr']{\vecV} +
    \norm[-1,r']{\vecf}\norm[1,r]{\vecV}
  \end{align*}
  for all $\vecV\in\Vn$.  By~\eqref{eq:bound} and the discrete inf-sup
  condition stated in Proposition~\ref{p:Dinf-sup}, it follows that the sequence
  $\{P^n\}_{n\in\N}$ is bounded in the reflexive Banach space
  $L^{\tr}_0(\Omega)$. Hence, there exists $p\in
  L^{\tr}_0(\Omega)$ such that, for a (not relabeled) subsequence,
  $P^n\weak  p$ weakly in $L^{\tr}_0(\Omega)$.  On the other
  hand we deduce for an arbitrary $\vecv\in W^{1,\infty}_0(\Omega)^d$
  that
  \begin{align*}
    \scp{\divo \vecv}{P^n}&=\scp{\divo\Pndiv\vecv}{P^n}+
    \scp{\divo(\vecv-\Pndiv\vecv)}{P^n}
    \\
    &= \int_\Omega\bSn(\cdot,\nablas\Un):\nablas \Pndiv\vecv\,\d
    x-\dual{\vecf}{\Pndiv\vecv} +\Trilin{\Un}{\Un}{\Pndiv \vecv}
    \\
    &\qquad+\scp{\divo(\vecv -\Pndiv\vecv)}{P^n}
    \\
    &\to \int_\Omega \bsS:\nablas\vecv+
    \divo(\vecu\otimes\vecu)\cdot\vecv\,\d x-\dual{\vecf}{\vecv} +0
  \end{align*}
  as $n\to\infty$, where we have
  used~\eqref{eq:weakSn},~\eqref{eq:Bn->B},~\eqref{eq:approxVn} and the boundedness of the sequence
  $\{P^n\}_{n\in\N}$ in~$L^{\tr}_0(\Omega)$.
  This completes the proof of the lemma.
\end{proof}

For the main result, Theorem~\ref{t:main}, it  remains to prove that
\begin{align}
  \label{eq:SDv}
  \big(\nablas \vecu(x),\,\bsS(x)\big)\in\Ax(x)
\end{align}
for almost every $x\in\Omega$. The proof of this is the subject of the rest of
Section \ref{s:main}.

\subsection{Identification of the limits\label{ss:limits}}

In this section we shall first briefly discuss properties of the
maximal monotone $r$-graph introduced in~\ref{A1}--\ref{A5}. Here we
follow the presentation in \cite{BulGwiMalSwi:09}. Application of the
fundamental theorem on Young measures (cf. Theorem~\ref{t:young_meas})
leads to a representation of weak limits, which is a crucial step in
proving~\eqref{eq:SDv}.  \medskip

According to \cite{FraGilMur:04} there exists a function
$\phi:\Omega\times\Rdds$ such that
\begin{align}\label{eq:AxChi}
  \Ax(x)=\left\{(\vec{\delta},\vec{\sigma})\in\Rdds\times\Rdds:
    \vec{\sigma}-\vec{\delta}=\phi(x,\vec{\sigma}+\vec{\delta})
  \right\},
\end{align}
and
\begin{enumerate}[leftmargin=1cm,itemsep=1ex,label=(\alph{*})]
\item $\phi(x,\vec{0})=\vec{0}$ for almost every $x\in\Omega$;
\item $\phi(\cdot,\vec{\chi})$ is measurable for all $\vec{\chi}\in
  \Rdds$;
\item for almost all $x\in\Omega$ the mapping $\phi(x,\cdot)$ is
  1-Lipschitz continuous;
\item the functions $\bss,\bsd:\Omega\times\Rdds\to\Rdds$, defined as
  \begin{align}\label{df:s&d}
    \bss(x,\vec{\chi})\definedas {\textstyle\frac12} \left(\vec{\chi}
      +\phi(x,\vec{\chi})\right),\quad \bsd(x,\vec{\chi})\definedas
    {\textstyle\frac12} \left(\vec{\chi} -\phi(x,\vec{\chi})\right)
  \end{align}
  satisfy, for almost every $x\in\Omega$ and all $\vec{\chi}\in\Rdds$,
  the estimate
  \begin{align*}
    \bss(x,\vec{\chi}):\bsd(x,\vec{\chi})\ge -m(x) +c\left(\abs{
        \bsd(x,\vec{\chi})}^{r}+\abs{\bss(x,\vec{\chi})}^{r'}\right).
  \end{align*}
\end{enumerate}
We emphasize that this is in fact a characterization of maximal monotone
$r$-graphs $\Ax$ satisfying~\ref{A1}--\ref{A5} without the second part
of~\ref{A2}.

We recall the selection $\bsS^\ast$ from Section~\ref{ss:approxAx} and, as
in \cite{BulGwiMalSwi:09}, we define
\begin{align}
  \label{df:bn}
  b_n(x)\definedas
  \int_{\Rdds}\big(\bsS^\ast(x,\vec{\zeta})-\bsS^\ast(x,\nablas
  \vecu)\big):(\vec{\zeta}-\nablas \vecu(x))\,\d \mu^n_x(\vec{\zeta}),
\end{align}
where we have used the abbreviation $\mu_x^n\definedas
\mu^n_{\nablas\Un(x)}$. The next result, whose proof is postponed to the next section, states that $b_n$
vanishes in measure.
\begin{lem}\label{l:bn->0}
  With the definitions of this section we have that
    $b_n\to 0$ 
    in measure.
\end{lem}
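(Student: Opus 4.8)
The plan is to follow the strategy of \cite{BulGwiMalSwi:09}, but with the crucial modification that the pressure and the convection term must be handled by finite-element-specific tools, most notably the discrete Lipschitz truncation of Corollary \ref{cor:dremlip} and the discrete Bogovski\u\i{} operator of Corollary \ref{c:Dbogovskii}. First I would establish that $b_n \geq 0$ almost everywhere and that $\{b_n\}_{n\in\N}$ is bounded in $L^1(\Omega)$. Nonnegativity is immediate from the monotonicity of $\bsS^\ast$ (property \ref{a3}) integrated against the probability measure $\mu^n_x$. Boundedness in $L^1$ follows by expanding the integrand in \eqref{df:bn}, using the growth/coercivity bounds \ref{a4} for $\bsS^\ast$, the fact that $\bSn(\cdot,\nablas\Un) = \bsS^\ast \convolution \eta^n$ applied at $\nablas\Un$ (so that $\int \bsS^\ast(x,\vec\zeta)\,\d\mu^n_x(\vec\zeta) = \bSn(x,\nablas\Un(x))$), and the uniform bound \eqref{eq:bound} on $\norm[1,r]{\Un}$ and $\norm[r']{\bSn(\cdot,\nablas\Un)}$, together with $\norm[r]{\nablas\vecu} \lesssim 1$ and $m \in L^1(\Omega)$.

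Having $\{b_n\}$ bounded and nonnegative in $L^1(\Omega)$, I would invoke Chacon's biting lemma (Lemma \ref{l:biting}) to extract a nonincreasing sequence of sets $E_k \subset \Omega$ with $\abs{E_k}\to 0$ such that, along a subsequence, $b_n \weak \overline{b}$ weakly in $L^1(\Omega \setminus E_k)$ for each $k$, with $\overline b \geq 0$. Since convergence in measure is equivalent to convergence in measure on $\Omega\setminus E_k$ for every $k$ (as $\abs{E_k}\to 0$), it suffices to show $\overline b = 0$ a.e.\ on $\Omega\setminus E_k$ for each $k$; and for this, because $b_n \geq 0$, it is enough to prove
\begin{align*}
  \int_{\Omega\setminus E_k} \overline b \,\d x = \lim_{n\to\infty}\int_{\Omega\setminus E_k} b_n \,\d x = 0.
\end{align*}
The central step is therefore to test the difference of the discrete equation \eqref{eq:discrete} and the limiting equation \eqref{eq:bS} against a function that, on $\Omega\setminus E_k$, behaves like $\Un - \vecu$, so as to produce (via the monotone structure and the definition of $\mu^n_x$) an upper bound for $\int_{\Omega\setminus E_k} b_n$ that tends to zero. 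Concretely, set $\vecE^n := \Pndiv\vecu - \Un$; by Proposition \ref{lem:Pnweak} and \eqref{eq:weakUn} this is a weak null sequence in $W^{1,r}_0(\Omega)^d$, so Corollary \ref{cor:dremlip} supplies discrete Lipschitz truncations $\vecE^{n,j} \in \Vn$ with $\vecE^{n,j} \to 0$ in $L^\infty$, $\nabla\vecE^{n,j}\weak^\ast 0$ in $L^\infty$, and the quantitative bound \ref{itm:dremlip5} on $\lambda_{n,j}\chi_{\Omega^n_{\lambda_{n,j}}(\vecE^n)}$. One tests \eqref{eq:discrete} with $\vecV = \vecE^{n,j}$, rewrites the pressure term $\scp{\divo\vecE^{n,j}}{P^n}$ using the discrete Bogovski\u\i{} operator (this is exactly why the strong statement in Corollary \ref{c:Dbogovskii} is needed), and uses $\Trilin{\Un}{\Un}{\vecE^{n,j}}\to 0$ because $\vecE^{n,j}\to 0$ uniformly while $\Un\otimes\Un$ is bounded in $L^{\tr}$. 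The monotonicity trick then gives
\begin{align*}
  \int_{\Omega\setminus E_k} b_n \,\d x
  \leq \int_\Omega \big(\bSn(\cdot,\nablas\Un) - \bsS^\ast(\cdot,\nablas\vecu)\big):\big(\nablas\Un - \nablas\vecu\big)\,\d x + o(1),
\end{align*}
and the right-hand side is expanded and estimated on the good set $\{\vecE^{n,j} = \vecE^n\} \supset \Rd\setminus\Omega^n_{\lambda_{n,j}}(\vecE^n)$ versus the small bad set $\Omega^n_{\lambda_{n,j}}(\vecE^n)$, the latter contribution being controlled by \ref{itm:dremlip5} with a weight $\lambda_{n,j}$ that H\"older-pairs against the bounded sequences $\nablas\Un$, $\bSn(\cdot,\nablas\Un)$ in $L^r$, $L^{r'}$ respectively; the resulting factor $2^{-j/s}$ is sent to zero by finally letting $j\to\infty$ after $n\to\infty$.

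The main obstacle, and the place where the discrete setting genuinely differs from \cite{BulGwiMalSwi:09}, is the simultaneous control of the pressure term and the bad set: one cannot simply test with $\Un-\vecu$ because (i) $\Un-\vecu \notin \Vn$ and (ii) even $\Pndiv(\Un-\vecu)$ is not divergence-free, so the pressure does not drop out. Routing the pressure through the discrete Bogovski\u\i{} operator and simultaneously cutting off where $\nabla\vecE^n$ is large (which is precisely where $\Omega^n_{\lambda_{n,j}}(\vecE^n)$ lives) is what makes both problems disappear, at the cost of the iterated limit $\lim_{j\to\infty}\lim_{n\to\infty}$ and the quantitative estimate \ref{itm:dremlip5}. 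Everything else — nonnegativity, the $L^1$ bound, the biting-lemma reduction, and the convection estimate — is routine given the tools already assembled in Sections \ref{sec:preliminaries} and \ref{s:fem_approx}.
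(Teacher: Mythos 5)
Your overall architecture is close — discrete Lipschitz truncation of $\vecE^n = \Pndiv\vecu - \Un$, discrete Bogovski\u\i{} correction to kill the pressure, and the iterated limit $\lim_{j\to\infty}\lim_{n\to\infty}$ are all present in the paper. However, there are two substantial differences from the paper's argument, one of which is a genuine gap.

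First, a structural deviation: the paper never invokes Chacon's biting lemma in the proof of Lemma~\ref{l:bn->0}. Instead it introduces the auxiliary function $a_n := (\bSn(\cdot,\nablas\Un)-\bsS^\ast(\cdot,\nablas\vecu)):(\nablas\Un-\nablas\vecu)$, shows $\int_\Omega|a_n-b_n|\,\d x\to 0$ via the mollification argument, and then establishes $a_n\to 0$ in measure. The biting lemma appears only in the proof of Lemma~\ref{l:young_meas}, for the separate purpose of identifying the Young-measure limit. Your biting-lemma reduction is a legitimate alternative in principle, but it imports an extra step (weak $L^1$-precompactness and passage to the bite limit) that the paper avoids.

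Second, and this is the genuine gap: your handling of the ``bad set'' $\Omega^n_{\lambda_{n,j}}(\vecE^n)$ is incorrect. You claim its contribution to the integral is ``controlled by \ref{itm:dremlip5} with a weight $\lambda_{n,j}$ that H\"older-pairs against the bounded sequences $\nablas\Un$, $\bSn$''. But~\ref{itm:dremlip5} only bounds $\|\lambda_{n,j}\chi_{\Omega^n_{\lambda_{n,j}}(\vecE^n)}\|_r$, which is relevant for terms that carry a factor of $\nablas\vecE^{n,j}$ (the truncated gradient, pointwise bounded by $c\lambda_{n,j}$). On the bad set, the actual integrand $a_n$ (or $b_n$) involves the untruncated $\nablas\Un$ and $\bSn(\cdot,\nablas\Un)$, which are not pointwise controlled by $\lambda_{n,j}$ and may concentrate precisely there. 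Smallness of the bad-set measure alone is useless for an $L^1$-bounded but not equi-integrable sequence. The paper's remedy is Step~3: the H\"older trick with an exponent $\vartheta\in(0,1)$, bounding $\int_\Omega|a_n|^\vartheta$ by $|\Omega|^{1-\vartheta}\big(\int_{\{\vecE^n=\vecE^{n,j}\}}|a_n|\big)^\vartheta + \big(\int_\Omega|a_n|\big)^\vartheta|\{\vecE^n\neq\vecE^{n,j}\}|^{1-\vartheta}$; the second term is killed by the doubly exponential smallness $|\{\vecE^n\neq\vecE^{n,j}\}|\leq c\,2^{-2^jr}$, and $\int_\Omega|a_n|^\vartheta\to 0$ still yields convergence in measure. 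Your plan lacks this device. One \emph{could} salvage your biting-lemma route by noting that the biting sets $E_k$ provide equi-integrability of $\{a_n\}$ on $\Omega\setminus E_k$, so that $\sup_n\int_{(\Omega\setminus E_k)\cap\Omega^n_{\lambda_{n,j}}}|a_n|$ vanishes as $j\to\infty$ by uniform integrability; but that is a different mechanism than what you wrote, and you would then need to track the subsequence through both the biting lemma and the Young-measure extraction. In short: you have the right tools in hand, but the step that actually makes the bad set harmless — either the $\vartheta$-H\"older argument or the equi-integrability observation — is missing from your plan.
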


\smallskip

Actually, employing the above
characterization of $\Ax$,
the limit of the sequence $\{b_n\}_{n \in \mathbb{N}}$ can be identified in another way by using
the theory of Young measures. To this end we introduce
\begin{align}\label{eq:Gx}
  \Gx(\vec{\zeta})\definedas \bsS^\ast(x,\vec{\zeta})+\vec{\zeta}, \qquad
  x\in\Omega,\; \vec{\zeta}\in\Rdds,
\end{align}
and define the push-forward measure of the
measure
$\mu_{\vec{\zeta}}^n$ from~\eqref{eq:S^n} by setting
\begin{align}\label{df:nun}
  \nu_{x,\vec{\zeta}}^n(\mathcal{C})=\mu_{\vec{\zeta}}^n
  \big(\Gx^{-1}(\mathcal{C})\big)\qquad \text{for all } \mathcal{C}\in
  \mathfrak{B}(\Rdds\big).
\end{align}
We recall from \sectionref{ss:approxAx}\ref{a1} that $\bsS^\ast$ is
measurable with respect to the product $\sigma$-algebra
$\mathfrak{L}(\Omega)\otimes\mathfrak{B}(\Rdd_{\text{sym}})$, and
therefore so is $\Gx$. Consequently, the measure
$\nu_{x,\vec{\zeta}}^n$ is well-defined and, thanks to properties of
the mollifier $\eta^n$ from Section~\ref{ss:approxAx}, it is a
probability measure.  From the definitions of the functions $\bss$ and
$\bsd$ it follows that
$\bsS^\ast(x,\vec{\zeta})=\bss(x,\Gx(\vec{\zeta}))$ and
$\vec{\zeta}=\bsd(x,\Gx(\vec{\zeta}))$. We thus have, by simple
substitution, the identities
\begin{subequations}\label{df:SUb}
  \begin{align}\label{df:SUba}
     \bSn(x,\nablas
      \Un)&=\int_{\Rdds}\bss(x,\vec{\zeta})\,\d \nu_{x}^n(\vec{\zeta}),
      \\\label{df:SUbb} \nablas \Un(x)&=\int_{\Rdds}
      \bsd(x,\vec{\zeta})\,\d \nu_{x}^n(\vec{\zeta}),
    \end{align}
    as well as
    \begin{gather}
    \label{df:SUbc}
    b_n(x)=\int_{\Rdds}\big(\bss(x,\vec{\zeta})-
    \bsS^\ast(x,\nablas\vecu(x))\big)
    :(\bsd(x,\vec{\zeta})-\nablas\vecu(x)) \,\d
    \nu_{x}^n(\vec{\zeta}),
  \end{gather}
\end{subequations}
where we have used the abbreviation $\nu_{x}^n\definedas
\nu_{x,\nablas\Un(x)}^n$.

In order to identify the limit we apply the generalized version of the
classical fundamental theorem on Young measures stated in Theorem
\ref{t:young_meas}.  Recall from Section \ref{ss:YoungMeasure} that
$L^\infty_w(\Omega;\Mspace)$ is a separable Banach space with predual
$L^1(\Omega,C_0(\Rdds))$.  For every $n\in\N$ the mapping $x\mapsto
\nu_x^n$ belongs to $L^\infty_w(\Omega;\Mspace)$. To see this let
$g\in L^1(\Omega;C_0(\Rdds))$. In view of \eqref{df:nun}
and~\eqref{eq:S^n}, a simple substitution yields
\begin{align*}
  \int_{\Rdds}g(x,\vec{\zeta})\,\d \nu_x^n(\vec{\zeta})
  &=\int_{\Rdds}\eta^n(\nablas\Un(x)-\vec{\zeta})\,g(x,\Gx(\vec{\zeta}))\,\d \vec{\zeta}.
\end{align*}
It remains to prove that $x\mapsto
\int_{\Rdds}\eta^n(\nablas\Un(x)-\vec{\zeta})\,g(x,\Gx(\vec{\zeta}))\,\d
\vec{\zeta}$ is measurable and integrable. It follows from~\ref{A5},
the definition~\eqref{eq:Gx} of $\Gx$ and property~\ref{a1} of
$\bsS^\ast$ that $h$ is
$\mathfrak{L}(\Omega)\otimes\mathfrak{B}(\Rdd_{\text{sym}})$
measurable. Moreover, $\eta^n$, $n\ge1$, are smooth functions and
$g\in L^1(\Omega,C_0(\Rdds))$, and therefore integrability follows
from Fubini's theorem.

Thanks to the properties of $\eta^n$ it is clear that $\nu_x^n$ is a
probability measure a.e. in~$\Omega$. Hence
$\norm[L^\infty_w(\Omega;\Mspace)]{\nu_x^n}=1$ and thus the sequence
$\{\nu^n\}_{n\in\N}$ is bounded in $L^\infty_w(\Omega;\Mspace)$.
Therefore, by the Banach--Alaoglu theorem, there exists $\nu\in
L^\infty_w(\Omega;\Mspace)$ such that, for a (not relabeled) subsequence,
\begin{align}\label{eq:nun->nu}
  \nu^n\weak^\ast\nu\qquad\text{weak-$\ast$ in}~
  L^\infty_w(\Omega;\Mspace).
\end{align}

The next Lemma follows from the generalization of the fundamental
theorem on Young measures from \cite{Gwiazda:05} (see
Theorem~\ref{t:young_meas}) and Chacon's biting lemma
(Lemma~\ref{l:biting}).  Its proof is postponed to
Section~\ref{ss:young_meas}.
\begin{lem}\label{l:young_meas}
  With the notations of this section, $\nu_x$ is a probability measure
  a.e. in~$\Omega$ and there exists a nonincreasing sequence of
  measurable subsets $E_k\subset\Omega$, with $\abs{E_k}\to0$, such
  that for all $k\in\N$ we have for a (not relabeled) subsequence that
   \begin{align}\label{eq:bn->b}
     b_n(x)\weak
     \int_{\Rdds}\big(\bss(x,\vec{\zeta})-\bsS^\ast(x,\vec{\zeta})\big)
     :(\bsd(x,\vec{\zeta})-\nablas\vecu(x)) \,\d
     \nu_{x}(\vec{\zeta})\asdefined b(x)
   \end{align}
   weakly in $L^1(\Omega\setminus E_k)$ as $n\to\infty$. Moreover, we
   have that
   \begin{align*}
     \bsS(x)&=\int_{\Rdds}\bss(x,\vec{\zeta})\,\d \nu_{x}(\vec{\zeta})
     \qquad\text{and}\qquad
     \nablas \vecu(x)=\int_{\Rdds}
     \bsd(x,\vec{\zeta})\,\d \nu_{x}(\vec{\zeta}).
   \end{align*}
\end{lem}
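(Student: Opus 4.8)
The plan is to apply the generalized fundamental theorem on Young measures (Theorem~\ref{t:young_meas}) to the push-forward probability measures $\{\nu^n\}_{n\in\N}$ of \eqref{df:nun}, and then to combine it with Chacon's biting lemma (Lemma~\ref{l:biting}) applied to the scalar sequence $\{b_n\}_{n\in\N}$ of \eqref{df:bn}. First I would check that Theorem~\ref{t:young_meas} applies to $\{\nu^n\}_{n\in\N}$: the weak-$\ast$ convergence $\nu^n\weak^\ast\nu$ in $L^\infty_w(\Omega;\Mspace)$ is already provided by \eqref{eq:nun->nu} and each $\nu^n_x$ is a probability measure, so only the tightness condition needs verification. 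Since $\mu^n_{\nablas\Un(x)}$ is supported in $\overline{B_{1/n}(\nablas\Un(x))}$, the push-forward satisfies $\supp\nu^n_x\subset\overline{B_{\rho_n(x)}(\boldsymbol{0})}$ with $\rho_n(x)\le c_1(\abs{\nablas\Un(x)}+1)^{r-1}+k(x)+\abs{\nablas\Un(x)}+1$, using the growth bound in \eqref{eq:S*coerc} and $\abs{\Gx(\vec\zeta)}\le\abs{\bsS^\ast(x,\vec\zeta)}+\abs{\vec\zeta}$. As $k\in L^{r'}(\Omega)$ and $\{\nablas\Un\}_{n\in\N}$ is bounded in $L^r$ by \eqref{eq:bound}, the $\rho_n$ are bounded in $L^{\min\{r,r'\}}(\Omega)$ uniformly in $n$, so Chebyshev's inequality yields $\sup_n\abs{\{\rho_n\ge R\}}\to0$ as $R\to\infty$, which is the tightness hypothesis. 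Theorem~\ref{t:young_meas}\ref{i} then gives that $\nu_x$ is a probability measure for a.e.\ $x\in\Omega$.

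Next I would identify $\bsS$ and $\nablas\vecu$ as barycenters of $\nu_x$ against $\bss$ and $\bsd$. The ingredient needed is the uniform bound $\sup_{n\in\N}\int_\Omega\int_{\Rdds}(\abs{\bss(x,\vec\zeta)}^{r'}+\abs{\bsd(x,\vec\zeta)}^{r})\,\d\nu^n_x(\vec\zeta)\,\dx<\infty$. It follows by integrating the coercivity inequality $\bss:\bsd\ge-m+c(\abs{\bsd}^r+\abs{\bss}^{r'})$ from Section~\ref{ss:limits} against $\nu^n_x$ and over $\Omega$, after rewriting the left-hand side, via the substitution $\vec\zeta\mapsto\Gx(\vec\zeta)$ and the identities preceding \eqref{df:SUb}, as $\int_\Omega\int_{\Rdds}\bsS^\ast(x,\vec\zeta):\vec\zeta\,\d\mu^n_{\nablas\Un(x)}(\vec\zeta)\,\dx$; on $\supp\mu^n_{\nablas\Un(x)}$ the integrand is at most $c_1\abs{\vec\zeta}^r+k(x)\abs{\vec\zeta}\le c_1(\abs{\nablas\Un(x)}+1)^r+k(x)(\abs{\nablas\Un(x)}+1)$, whose integral is controlled by \eqref{eq:bound} and H\"older's inequality. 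Together with Jensen's inequality this bound verifies the Carath\'eodory condition \eqref{eq:carath} for $h=\bss$ and $h=\bsd$ with $\omega=\Omega$ (since $\int_\Omega\int_{\{\abs{\bss}>R\}}\abs{\bss}\,\d\nu^n_x\,\dx\le R^{1-r'}\sup_n\int_\Omega\int\abs{\bss}^{r'}\,\d\nu^n_x\,\dx\to0$, and likewise for $\bsd$), so Theorem~\ref{t:young_meas}\ref{iii} gives $\int_{\Rdds}\bss\,\d\nu^n_x\weak\int_{\Rdds}\bss\,\d\nu_x$ and $\int_{\Rdds}\bsd\,\d\nu^n_x\weak\int_{\Rdds}\bsd\,\d\nu_x$ weakly in $L^1(\Omega)$. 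Since by \eqref{df:SUba}--\eqref{df:SUbb} the left-hand sides equal $\bSn(\cdot,\nablas\Un)$ and $\nablas\Un$, which converge weakly to $\bsS$ and $\nablas\vecu$ by Lemma~\ref{l:basic_conv}, uniqueness of weak limits yields the two barycenter identities.

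The remaining, delicate part is the passage to the limit in $b_n$. Write $h(x,\vec\zeta)$ for the Carath\'eodory integrand of \eqref{df:SUbc}. The monotonicity~\ref{A2} of $\Ax(x)$, together with $(\bsd(x,\vec\zeta),\bss(x,\vec\zeta))\in\Ax(x)$ for every $\vec\zeta$ (from the characterization \eqref{eq:AxChi}) and $(\nablas\vecu(x),\bsS^\ast(x,\nablas\vecu(x)))\in\Ax(x)$, shows $h\ge0$, hence $b_n\ge0$ a.e.; expanding \eqref{df:SUbc} into four terms and estimating each by H\"older's inequality (using \eqref{eq:bound}, \eqref{eq:S*coerc} and the bound of the previous paragraph) shows $\{b_n\}_{n\in\N}$ is bounded in $L^1(\Omega)$. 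By Chacon's biting lemma (Lemma~\ref{l:biting}), after passing to a (not relabeled) subsequence and diagonalizing, there is a nonincreasing sequence $\{E_k\}_{k\in\N}$ with $\abs{E_k}\to0$ such that $b_n\weak b$ weakly in $L^1(\Omega\setminus E_k)$ for every $k$, and in particular $\{b_n\}$ is uniformly integrable on each $\Omega\setminus E_k$ (Dunford--Pettis). To identify $b$ with $\int_{\Rdds}h\,\d\nu_x$, \ie with the right-hand side of \eqref{eq:bn->b}, I would apply Theorem~\ref{t:young_meas}\ref{iii} once more, now with $\omega=\Omega\setminus E_k$. The Carath\'eodory condition \eqref{eq:carath} is checked by splitting $\Omega\setminus E_k$ according to whether $\supp\nu^n_x\subset\overline{B_R(\boldsymbol{0})}$ or not: on the first piece $h(x,\cdot)$ is dominated on $\supp\nu^n_x$ by the fixed $L^1$-function $(R+\abs{\bsS^\ast(x,\nablas\vecu(x))})(R+\abs{\nablas\vecu(x)})$ (since $\phi(x,\cdot)$ is $1$-Lipschitz and $\phi(x,\boldsymbol{0})=\boldsymbol{0}$, one has $\abs{\bss(x,\vec\zeta)},\abs{\bsd(x,\vec\zeta)}\le\abs{\vec\zeta}\le R$ there), whose contribution to \eqref{eq:carath} tends to $0$ as the truncation level tends to $\infty$; the second piece has measure bounded by $\sup_n\abs{\{x:\supp\nu^n_x\not\subset\overline{B_R(\boldsymbol{0})}\}}$, small for large $R$ by the tightness above, so its contribution $\le\int b_n\,\dx$ over that piece is small uniformly in $n$ by the uniform integrability of $\{b_n\}$ on $\Omega\setminus E_k$. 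Once \eqref{eq:carath} is verified, Theorem~\ref{t:young_meas}\ref{iii} gives $b_n=\int_{\Rdds}h\,\d\nu^n_x\weak\int_{\Rdds}h\,\d\nu_x$ weakly in $L^1(\Omega\setminus E_k)$, which is \eqref{eq:bn->b}.

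I expect this last identification to be the main obstacle. The Young measure convergence of $b_n$ via Theorem~\ref{t:young_meas}\ref{iii} requires the uniform-integrability condition \eqref{eq:carath} for $h$ against the $n$-dependent measures $\nu^n_x\,\dx$, and this genuinely fails on all of $\Omega$: no integrability of $\nablas\Un$ beyond $L^r$ is available, so $h$ is only $L^1$-bounded against $\nu^n_x\,\dx$. The purpose of Chacon's biting lemma is precisely to discard a small set $E_k$ off which the concentration is absent; the technical core is to make the outer uniform integrability of $\{b_n\}$ on $\Omega\setminus E_k$ and the tightness of the supports $\supp\nu^n_x$ jointly deliver the inner condition \eqref{eq:carath} on $\Omega\setminus E_k$, where the sign $b_n\ge0$ coming from the monotonicity of $\Ax$ is essential.
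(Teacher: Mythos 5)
Your proof is correct, but Step 2 --- identifying the weak $L^1$ limit of $\{b_n\}$ off a bite-set --- takes a genuinely different route from the paper. The paper does not bite $\{b_n\}$ itself: it introduces the dominating scalar sequence $\{c_n\}$ of \eqref{eq:cn}, applies Chacon's biting lemma to $\{c_n\}$, and on each $\Omega\setminus E_i$ invokes the de la Vall\'ee-Poussin criterion to obtain a convex superlinear $\phi$ with $\sup_n\int_{\Omega\setminus E_i}\phi(|c_n|)\,\dx<\infty$; the tail condition \eqref{eq:carath} is then checked by changing variables back to $\mu^n_x$ and using the pointwise bound $H(x,\cdot)\le c_n(x)$ on $\supp\mu^n_x$, whence the $R$-tail is dominated by $\sup_{t\ge R}\tfrac{t}{\phi(t)}\cdot\sup_n\int\phi(|c_n|)\,\dx\to0$. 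You instead apply Chacon directly to $\{b_n\}$, harvest the resulting uniform integrability on $\Omega\setminus E_k$ via Dunford--Pettis, and verify \eqref{eq:carath} by splitting $\Omega\setminus E_k$ according to whether $\supp\nu^n_x\subset\overline{B_R(\boldsymbol{0})}$ or not: on the first piece $h$ is dominated on $\supp\nu^n_x$ by the fixed $L^1$ function $(R+|\bsS^*(x,\nablas\vecu)|)(R+|\nablas\vecu|)$ (using that $\phi(x,\cdot)$ is $1$-Lipschitz with $\phi(x,\boldsymbol{0})=\boldsymbol{0}$, so $|\bss|,|\bsd|\le|\vec{\zeta}|\le R$ there), and on the second piece $\int_{\{|h|>T\}}h\,\d\nu^n_x\le b_n$ because $h\ge0$, so tightness plus uniform integrability of $\{b_n\}$ on $\Omega\setminus E_k$ make the contribution small uniformly in $n$ and $T$. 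Your route thus crucially exploits the sign $h\ge0$ inherited from the monotonicity~\ref{A2}, whereas the paper's argument is sign-blind and would also apply to a dominated signed integrand, at the price of constructing $\{c_n\}$ and invoking de la Vall\'ee-Poussin. You also spell out the verification of \eqref{eq:carath} for the barycenter identities (via coercivity of the selection and a Chebyshev tail estimate), which the paper dismisses with ``follows similarly''. Both arguments are valid and lead to the same conclusion.
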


\iffalse
Before completing the proof of Theorem \ref{t:main} with the help of
the preceding lemma, we observe from~\ref{A2} and~\ref{A3} that, for
each $\vec{\delta}\in \Rdd$ and a.e. $x\in \Omega$, the set
\begin{align}\label{eq:Cdelta}
  \mcC^x_{\vec{\delta}}\definedas\left\{\vec{\sigma}\in\Rdds\colon
    (\vec{\delta},\vec{\sigma})\in \Ax(x)\right\}
\end{align}
is convex.
We deduce from~\eqref{eq:SDv} and Lemma~\ref{l:young_meas}
that, to complete the proof, we need to show that
\begin{align}
  \label{eq:DsAx}
  \left(\nablas \vecu(x),\, \int_{\Rdds}\bss(x,\vec{\zeta})\,\d
    \nu_x(\vec{\zeta})\right)\in\Ax(x)\qquad\text{for a.e. $x \in \Omega$.}
\end{align}
To this end we shall collect some of the results established above.
Thanks to~\eqref{eq:AxChi} and~\eqref{df:s&d} we have,
  for a.e.~$x\in \Omega$, that
  \begin{align}\label{eq:dsAx}
    \big(\bsd(x,\vec{\zeta}),\,\bss(x,\vec{\zeta})\big)
    \in\Ax(x)\qquad\text{for all}~\vec{\zeta}\in\Rdds
  \end{align}
  and thanks to the properties of $\bsS^\ast$ stated in
  Section~\ref{ss:approxAx}, we have that
  \begin{align}\label{eq:DSAx}
    \big(\nablas \vecu(x),\bsS^\ast(x,\nablas \vecu)\big)\in
    \Ax(x)\qquad\text{for a.e. $x\in\Omega$.}
  \end{align}
  Combining Lemmas~\ref{l:bn->0} and~\ref{l:young_meas} we deduce that
  $b=0$ a.e. in $\Omega$. From this observation and~\ref{A2} it
  follows that
  \begin{align*}
    \left\{\vec{\zeta}\in\Rdds\colon
      \left(\bss(x,\vec{\zeta})-\bsS^\ast(x,\nablas\vecu(x)\right):
      \left(\bsd(x,\vec{\zeta})-\nablas\vecu(x)
      \right)>0\right\}\nsubset \supp\nu_x.
  \end{align*}
  Here, as usual, the support of the probability measure $\nu_x$ is
  defined to be the largest closed subset of $\Rdds$ for which every
  open neighborhood of every point of the set has positive measure.
  We split $\supp \nu_x$ into the following two sets:
  \begin{align*}
    \omega_1(x)&\definedas\left\{\vec{\zeta}\in\supp \nu_x\colon
      \bss(x,\vec{\zeta})=\bsS^\ast(x,\nablas\vecu(x))\right\},
    \\
    \omega_2(x)&\definedas \supp \nu_x\,\setminus \,\omega_1(x).
  \end{align*}
  As a consequence of~\ref{A2} we have that
  $\bsd(x,\vec{\zeta})=\nablas\vecu(x)$ for all $\vec{\zeta}\in
  \omega_2(x)$. Thus, for
  a.e. $x\in\Omega$ we deduce that
  \begin{multline*}
    \left(\int_{\omega_2(x)}\bsd(x,\vec{\zeta})\,\d \nu_x(\vec{\zeta})),
      \,\int_{\omega_2(x)}\bss(x,\vec{\zeta})\,\d \nu_x(\vec{\zeta})\right)
    \\
    =\nu_x\big(\omega_2(x)\big)
    \left(\nablas\vecu(x),\,\int_{\omega_2(x)}\bss(x,\vec{\zeta})
      \,\d \big(\tfrac{\nu_x(\vec{\zeta})}{\nu_x(\omega_2(x))}\big)\right).
  \end{multline*}
  The measure $\tfrac{\nu_x(\vec{\zeta})}{\nu_x(\omega_2(x))}$ is
  a probability measure on $\omega_2(x)$ and thus by~\eqref{eq:dsAx} it
  follows that $\int_{\omega_2(x)}\bss(x,\vec{\zeta})
  \,\d \left(\tfrac{\nu_x(\vec{\zeta})}{\nu_x(\omega_2(x))}\right)$ is a
  convex combination in the set $\mcC^x_{\nablas \vecu(x)}$; compare
  with~\eqref{eq:Cdelta}. Thus
  \begin{align}
    \label{eq:1}
    \left(\nablas\vecu(x),\,\int_{\omega_2(x)}\bss(x,\vec{\zeta}) \,\d
      \big(\tfrac{\nu_x(\vec{\zeta})}{\nu_x(\omega_2(x))}\big)\right)\in\Ax(x)
  \end{align}
  for a.e. $x\in\Omega$.  In view of Lemma~\ref{l:young_meas} we know
  that $\nu_x$ is a probability measure, \ie
  $\nu_x(\omega_1(x))+\nu_x(\omega_2(x))=1$.  Therefore, it follows from the
  identity for the shear rate in Lemma~\ref{l:young_meas}, that
  \begin{align*}
    \int_{\omega_1(x)}\bsd(x,\vec{\zeta})\,\d \nu_x(\vec{\zeta})&=
    \int_{\Rdds}\bsd(x,\vec{\zeta})\,\d
    \nu_x(\vec{\zeta})-\int_{\omega_2(x)}\bsd(x,\vec{\zeta})\,\d
    \nu_x(\vec{\zeta})
    \\
    &=\nablas \vecu(x)\big(1-\nu_x\big(\omega_2(x)\big)\big)=
    \nu_x\big(\omega_1(x)\big)\nablas \vecu(x).
  \end{align*}
  Hence, employing the definition of $\omega_1(x)$ we arrive at
  \begin{multline*}
    \left(\int_{\omega_1(x)}\bsd(x,\vec{\zeta})\,\d \nu_x(\vec{\zeta}),
      \,\int_{\omega_1(x)}\bss(x,\vec{\zeta})\,\d \nu_x(\vec{\zeta})\right)
    \\
    = \nu_x\big(\omega_1(x)\big) \left(\nablas\vecu(x)
      ,\,\bsS^\ast(x,\nablas\vecu(x))\right).
  \end{multline*}
  Thanks to~\eqref{eq:DSAx} we have that $\left(\nablas\vecu(x)
    ,\,\bsS^\ast(x,\nablas\vecu(x))\right)\in\Ax(x)$.
  Using once again the fact that $\nu_x$ is a probability measure we
  then deduce on noting~\eqref{eq:1} that
  \begin{multline*}
    \int_{\Rdds}\bss(x,\vec{\zeta})\,\d \nu_x(\vec{\zeta})=
    \int_{\omega_1(x)}\bss(x,\vec{\zeta})\,\d \nu_x(\vec{\zeta})
    +\int_{\omega_2(x)}\bss(x,\vec{\zeta})\,\d \nu_x(\vec{\zeta})
    \\
    =\nu_x(\omega_1(x)) \bsS^\ast(x,\nablas\vecu(x))+\nu_x(\omega_2(x))\int_{\omega_2(x)}\bss(x,\vec{\zeta})
    \,\d \big(\tfrac{\nu_x(\vec{\zeta})}{\nu_x(\omega_2(x))}\big)
  \end{multline*}
  is a convex combination of functions in $\mcC^x_{\nablas\vecu(x)}$.
  This proves~\eqref{eq:DsAx} and completes the proof of Theorem \ref{t:main}.
  \bigskip

\else
We deduce from~\eqref{eq:SDv} and Lemma~\ref{l:young_meas}
that, to complete the proof of Theorem~\ref{t:main}, we need to show that
\begin{align}
  \label{eq:DsAx}
  \left(\nablas \vecu(x),\, \int_{\Rdds}\bss(x,\vec{\zeta})\,\d
    \nu_x(\vec{\zeta})\right)\in\Ax(x)\qquad\text{for a.e. $x \in \Omega$.}
\end{align}
This follows from  the two preceding Lemmas exactly as in
\cite[p.\,131ff]{BulGwiMalSwi:09}. To be more precise, the proof is based on noting that for
each $\vec{\delta}\in \Rdd$ the set
\begin{align}\label{eq:Cdelta}
  \mcC^x_{\vec{\delta}}\definedas\left\{\vec{\sigma}\in\Rdds\colon
    (\vec{\delta},\vec{\sigma})\in \Ax(x)\right\}\quad\text{is convex for a.e. $x\in \Omega$;}
\end{align}
compare with~\ref{A2} and~\ref{A3}.
Combining Lemmas~\ref{l:bn->0} and~\ref{l:young_meas} we deduce that
$b=0$ a.e. in $\Omega$. Hence it follows from~\ref{A2} that
\begin{align*}
  \left\{\vec{\zeta}\in\Rdds\colon
    \left(\bss(x,\vec{\zeta})-\bsS^\ast(x,\nablas\vecu(x)\right):
    \left(\bsd(x,\vec{\zeta})-\nablas\vecu(x)
    \right)>0\right\}\nsubset \supp\nu_x.
\end{align*}
We split $\supp \nu_x$ into the two sets
\begin{align*}
  \omega_1(x)&\definedas\left\{\vec{\zeta}\in\supp \nu_x\colon
    \bss(x,\vec{\zeta})=\bsS^\ast(x,\nablas\vecu(x))\right\}\quad\text{and}\quad
  \omega_2(x)\definedas \supp \nu_x\,\setminus \,\omega_1(x).
\end{align*}
We investigate the pairing in
\eqref{eq:DsAx} on the two sets $\omega_1(x)$ and $\omega_2(x)$
separately. On $\omega_2(x)$ we have by~\ref{A2} that
$\bsd(x,\vec{\zeta})=\nablas\vecu(x)$. Therefore, on noting that
$\tfrac{\nu_x(\vec{\zeta})}{\nu_x(\omega_2(x))}$ is a probability
measure on $\omega_2(x)$, one can show
with~\eqref{eq:AxChi}, \eqref{df:s&d} and \eqref{eq:Cdelta} that
\begin{align*}
  \left(\nablas\vecu(x),\,\int_{\omega_2(x)}\bss(x,\vec{\zeta}) \,\d
    \big(\tfrac{\nu_x(\vec{\zeta})}{\nu_x(\omega_2(x))}\big)\right)\in\Ax(x)
  \quad\text{  for a.e. $x\in\Omega$.}
\end{align*}
On the other hand it follows from the definition of $\omega_1(x)$ that
\begin{align*}
  \int_{\omega_1(x)}\bss(x,\vec{\zeta})\,\d \nu_x(\vec{\zeta})
  = \nu_x\big(\omega_1(x)\big)
  \bsS^\ast(x,\nablas\vecu(x)).
\end{align*}
Thanks to the properties of $\bsS^\ast$, we have that $\left(\nablas\vecu(x)
  ,\,\bsS^\ast(x,\nablas\vecu(x))\right)\in\Ax(x)$; compare with
Section~\ref{ss:approxAx}.
Using the fact that $\nu_x$ is a probability measure, we
deduce  that
\begin{multline*}
  \int_{\Rdds}\bss(x,\vec{\zeta})\,\d \nu_x(\vec{\zeta})=
    \int_{\omega_1(x)}\bss(x,\vec{\zeta})\,\d \nu_x(\vec{\zeta})
    +\int_{\omega_2(x)}\bss(x,\vec{\zeta})\,\d \nu_x(\vec{\zeta})
    \\
    =\nu_x(\omega_1(x)) \bsS^\ast(x,\nablas\vecu(x))+\nu_x(\omega_2(x))\int_{\omega_2(x)}\bss(x,\vec{\zeta})
    \,\d \big(\tfrac{\nu_x(\vec{\zeta})}{\nu_x(\omega_2(x))}\big)
\end{multline*}
is a convex combination of functions. Moreover, due to the above
observations, for a.e. $x\in\Omega$, each of the two
functions in this convex combination is an element of the set
$\mcC^x_{\nablas\vecu(x)}$.
Hence, by \eqref{eq:Cdelta}, this completes the proof of Theorem \ref{t:main}.
\bigskip
\fi

  As in \cite{BulGwiMalSwi:09} we can establish from the above
  observations strong convergence of the symmetric velocity gradient
  and the stress on the subsets
  \begin{align*}
    \Omega_{\bsD}&\definedas \big\{x\in\Omega\colon
    \forall(\vec{\sigma}_1,\vec{\delta}_1)\in\Ax(x)~\text{with}~
    \big(\vec{\sigma}_1-\bsS^\ast(x,\nablas\vecu(x))\big):
    (\vec{\delta}_1-\nablas\vecu(x))=0
    \\
    &\qquad~\text{implies that}~\vec{\delta}_1=\nablas\vecu(x)\big\},\quad \mbox{and}
    \\
    \Omega_{\bsS}&\definedas \big\{x\in\Omega\colon
    \forall(\vec{\sigma}_1,\vec{\delta}_1)\in\Ax(x)~\text{with}~
    \big(\vec{\sigma}_1-\bsS^\ast(x,\nablas\vecu(x))\big):
    (\vec{\delta}_1-\nablas\vecu(x))=0
    \\
    &\qquad~\text{implies that}~\vec{\sigma}_1=\bsS^\ast(x,\nablas\vecu(x))\big\},
  \end{align*}
  respectively. Since the proof is identical to the proof of
  \cite[Lemma~5.2]{BulGwiMalSwi:09} we omit it here and we only state the
  result.

  \begin{cor}
    Assume the conditions of Theorem \ref{t:main} and let
    $r'\in(1,\infty)$ be such that $\frac1r+\frac1{r'}=1$. Then, for all
    $1\le s< r$ and $1\le s'<r'$, we have that, as $n \rightarrow \infty$,
    \begin{alignat*}{2}
      \nablas \Un&\to \new{\nablas} \vecu &\qquad&\text{strongly in}~
      L^{s}(\Omega_{\bsD})^{d\times d},
      \\
      \bSn&\to \bsS^\ast(\cdot,\nablas\vecu)&\qquad&\text{strongly in}~
      L^{s'}(\Omega_{\bsS})^{d\times d}.
    \end{alignat*}
  \end{cor}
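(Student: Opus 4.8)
The plan is to follow the route of \cite[Lemma~5.2]{BulGwiMalSwi:09}: I deduce the two strong convergences from the fact that, on $\Omega_{\bsD}$ (respectively $\Omega_{\bsS}$), the Young measure $\nu_x$ is concentrated on $\{\vec{\zeta}\colon\bsd(x,\vec{\zeta})=\nablas\vecu(x)\}$ (respectively $\{\vec{\zeta}\colon\bss(x,\vec{\zeta})=\bsS^\ast(x,\nablas\vecu(x))\}$), combined with Jensen's inequality and part~\ref{iii} of Theorem~\ref{t:young_meas}. For the concentration I recall that, as shown in the proof of Theorem~\ref{t:main} above (combining Lemmas~\ref{l:bn->0} and~\ref{l:young_meas}), one has $b=0$ a.e., hence by~\ref{A2} that $\big(\bss(x,\vec{\zeta})-\bsS^\ast(x,\nablas\vecu(x))\big):\big(\bsd(x,\vec{\zeta})-\nablas\vecu(x)\big)=0$ for $\nu_x$-a.e.\ $\vec{\zeta}$ and a.e.\ $x\in\Omega$. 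Since $(\bsd(x,\vec{\zeta}),\bss(x,\vec{\zeta}))\in\Ax(x)$ by~\eqref{eq:AxChi}--\eqref{df:s&d}, the definition of $\Omega_{\bsD}$ forces $\bsd(x,\vec{\zeta})=\nablas\vecu(x)$ for $\nu_x$-a.e.\ $\vec{\zeta}$, a.e.\ $x\in\Omega_{\bsD}$, and that of $\Omega_{\bsS}$ forces $\bss(x,\vec{\zeta})=\bsS^\ast(x,\nablas\vecu(x))$ for $\nu_x$-a.e.\ $\vec{\zeta}$, a.e.\ $x\in\Omega_{\bsS}$; in particular, for all $1\le s<r$ and $1\le s'<r'$,
\[
\int_{\Rdds}|\bsd(x,\vec{\zeta})-\nablas\vecu(x)|^{s}\,\d\nu_x(\vec{\zeta})=0\ \ \text{on }\Omega_{\bsD},\qquad \int_{\Rdds}|\bss(x,\vec{\zeta})-\bsS^\ast(x,\nablas\vecu(x))|^{s'}\,\d\nu_x(\vec{\zeta})=0\ \ \text{on }\Omega_{\bsS}.
\]

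Next I fix $k\in\N$ with the exceptional set $E_k$ from Lemma~\ref{l:young_meas} and apply Theorem~\ref{t:young_meas}\ref{iii} on $\omega:=\Omega_{\bsD}\setminus E_k$ to the Carath\'eodory integrand $h(x,\vec{\zeta}):=|\bsd(x,\vec{\zeta})-\nablas\vecu(x)|^{s}$. The tightness hypothesis~\eqref{eq:carath} follows since, using $\bsd(x,\Gx(\vec{\zeta}))=\vec{\zeta}$ and $\operatorname{supp}\mu^n_x\subset B_{1/n}(\nablas\Un(x))$ (with $\mu^n_x=\mu^n_{\nablas\Un(x)}$), one has $\int_{\Rdds}|\bsd(x,\vec{\zeta})|^{r}\,\d\nu^n_x=\int_{\Rdds}|\vec{\zeta}|^{r}\,\d\mu^n_x\le c(|\nablas\Un(x)|^{r}+1)$, so by~\eqref{eq:bound} the maps $x\mapsto\int_{\Rdds}h(x,\vec{\zeta})\,\d\nu^n_x$ are bounded in $L^{r/s}(\Omega)$ uniformly in $n$, hence equi-integrable. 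Combining with the first display, Theorem~\ref{t:young_meas}\ref{iii} gives
\[
\int_{\Omega_{\bsD}\setminus E_k}\int_{\Rdds}|\bsd(x,\vec{\zeta})-\nablas\vecu(x)|^{s}\,\d\nu^n_x(\vec{\zeta})\,\d x\ \longrightarrow\ 0,
\]
and since $|\nablas\Un(x)-\nablas\vecu(x)|^{s}\le\int_{\Rdds}|\bsd(x,\vec{\zeta})-\nablas\vecu(x)|^{s}\,\d\nu^n_x(\vec{\zeta})$ by Jensen's inequality and~\eqref{df:SUbb}, I obtain $\nablas\Un\to\nablas\vecu$ in $L^{s}(\Omega_{\bsD}\setminus E_k)^{d\times d}$ for every $k$. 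The same argument with $\bss$, $\bsS^\ast(\cdot,\nablas\vecu)$ and $1\le s'<r'$ (now using $\bss(x,\Gx(\vec{\zeta}))=\bsS^\ast(x,\vec{\zeta})$, the growth bound~\eqref{eq:S*coerc}, \eqref{df:SUba} and~\eqref{eq:bound}) yields $\bSn(\cdot,\nablas\Un)\to\bsS^\ast(\cdot,\nablas\vecu)$ in $L^{s'}(\Omega_{\bsS}\setminus E_k)^{d\times d}$.

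Finally, since $\{\nablas\Un\}_n$ is bounded in $L^{r}(\Omega)^{d\times d}$ by~\eqref{eq:bound} and $r/s>1$, the family $\{|\nablas\Un-\nablas\vecu|^{s}\}_n$ is equi-integrable, and as $|E_k|\to0$ one has $\sup_n\int_{E_k}|\nablas\Un-\nablas\vecu|^{s}\,\d x\to0$ as $k\to\infty$; together with the previous step this gives $\nablas\Un\to\nablas\vecu$ in $L^{s}(\Omega_{\bsD})^{d\times d}$. The same reasoning (using $\{\bSn(\cdot,\nablas\Un)\}_n$ bounded in $L^{r'}(\Omega)^{d\times d}$ by~\eqref{eq:bound}, together with $\bsS^\ast(\cdot,\nablas\vecu)\in L^{r'}(\Omega)^{d\times d}$, which holds by~\eqref{eq:S*coerc} since $\nablas\vecu\in L^{r}$) yields $\bSn(\cdot,\nablas\Un)\to\bsS^\ast(\cdot,\nablas\vecu)$ in $L^{s'}(\Omega_{\bsS})^{d\times d}$, completing the proof.

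I expect the main obstacle to be the middle step: verifying the tightness condition~\eqref{eq:carath} needed to invoke Theorem~\ref{t:young_meas}\ref{iii}, which rests on transporting the uniform $L^{r}$/$L^{r'}$ moment bounds~\eqref{eq:bound} from $\nu^n_x$ back to the mollified measures $\mu^n_x$ through $\Gx$. A minor accompanying point is that $\Omega_{\bsD}$ and $\Omega_{\bsS}$ must be checked to be measurable (a consequence of~\ref{A5} and the measurability of $\bsS^\ast$) so that the Young-measure statements can be localised to them.
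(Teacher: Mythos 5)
Your argument is correct and follows precisely the route the paper itself endorses: the paper omits the proof and refers to \cite[Lemma~5.2]{BulGwiMalSwi:09}, and your write-up is an explicit reconstruction of that argument using $b=0$ a.e.\ (from Lemmas~\ref{l:bn->0} and~\ref{l:young_meas}), the concentration of $\nu_x$ forced by the definitions of $\Omega_{\bsD}$ and $\Omega_{\bsS}$, Jensen's inequality via \eqref{df:SUba}--\eqref{df:SUbb}, and Theorem~\ref{t:young_meas}\ref{iii} with tightness supplied by the uniform $L^{r}$/$L^{r'}$ moment bounds pulled back through $\Gx$, followed by the standard equi-integrability step to absorb the biting sets $E_k$. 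The two caveats you flag (verifying \eqref{eq:carath} and the measurability of $\Omega_{\bsD},\Omega_{\bsS}$) are the right ones and you handle them correctly.
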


\subsection{Proof of Lemma~\ref{l:bn->0}}
\label{ss:bn->0}

The proof of this assertion is motivated by the proof of \cite[Lemma
4.6]{BulGwiMalSwi:09}. However, since we are approximating problem
\eqref{eq:implicit} with finite element functions here, we need to use
the discrete Lipschitz truncation from~\sectionref{ss:Lipschitz}.

Let us define the auxiliary function
\begin{align}\label{df:an}
  a_n(x)\definedas \left(\bSn(x,\nablas\Un(x))-\bsS^\ast(x,\nablas
    \vecu(x))\right): (\nablas \Un(x)-\nablas \vecu(x))
\end{align}
and observe that
\begin{align}\label{eq:an-bn}
  \int_\Omega\abs{a_n-b_n}\,\d x \to 0\qquad\text{as}\quad n\to\infty.
\end{align}
Indeed, thanks to \eqref{eq:S^n} and the properties of $\eta^n$, we
have  that
\begin{align*}
  \int_\Omega\abs{a_n-b_n}\,\d x&=\int_\Omega\Big| \int_\Rdds \left(
    \bsS^\ast(x,\vec{\zeta})-\bsS^\ast(x,\nablas \vecu)\right): (\nablas
  \Un-\nablas \vecu) \,\d \mu_x^n(\vec{\zeta})
  \\
  &\qquad-\int_\Rdds \left( \bsS^\ast(x,\vec{\zeta})-\bsS^\ast(x,\nablas
    \vecu)\right): (\vec{\zeta}-\nablas \vecu) \,\d
  \mu_x^n(\vec{\zeta})\Big|\,\d x
  \\
  &=\int_\Omega\Big| \int_\Rdds \left(
    \bsS^\ast(x,\vec{\zeta})-\bsS^\ast(x,\nablas \vecu)\right): (\nablas
  \Un-\vec{\zeta}) \,\d \mu_x^n(\vec{\zeta})\Big|\,\d x
  \\
  &\le \int_\Omega \int_\Rdds \big|
  \bsS^\ast(x,\vec{\zeta})-\bsS^\ast(x,\nablas \vecu)\big|\,\big| \nablas
  \Un-\vec{\zeta}\big| \,\d \mu_x^n(\vec{\zeta})\,\d x
  \\
  &\leq \frac{c}{n}\int_\Omega
  \sup_{\abs{\vec{\zeta}-\nablas\Un(x)}\le\frac1n}
  \abs{\bsS^\ast(x,\vec{\zeta})-\bsS^\ast(x,\nablas\Un)}\,\d x\le \frac{c}n.
  \end{align*}
  Consequently, in order to prove that $b_n\to0$ in measure it
  suffices to prove that $a_n\to 0$ in measure. We shall establish the
  second claim in several steps.

  \textbf{Step 1:} First we introduce some preliminary facts
  concerning discrete Lipschitz truncations. For
  convenience we use the notation
  \begin{align*}
    \vecE^n \definedas \Pndiv(\Un-\vecu)=\Un-\Pndiv\vecu\in\Vn
  \end{align*}
  and let $\{\vecE^{n,j}\}_{n,j\in\N}\subset\Vn$ be the sequence of
  Lipschitz-truncated  finite element functions
  according to
  Corollary \ref{cor:dremlip}.  Recall from Lemma \ref{l:basic_conv}
  that $\vecE^n \weak 0$ weakly in $W^{1,r}_0(\Omega)^d$, \ie we are
  exactly in the situation of Corollary~\ref{cor:dremlip}.
  Although $\vecE^n\in\Vndiv$, \ie it is discretely divergence-free,
  this does not necessarily imply that $\vecE^{n,j}\in\Vndiv$
  and thus we need to modify $\vecE^{n,j}$ in order to be able
  to use it as a test function in \eqref{eq:Vn0}. Recalling Corollary
  \ref{c:Dbogovskii} we define
  \begin{subequations}\label{df:PhiPsi}
    \begin{align}
      \vec{\Psi}^{n,j}\definedas \Bogn(\divo \vecE^{n,j})\in \Vn.
    \end{align}
    The `corrected' function
    \begin{align}
      \vec{\Phi}^{n,j}\definedas \vecE^{n,j}-\vec{\Psi}^{n,j}\in\Vndiv
    \end{align}
  \end{subequations}
  is then discretely divergence-free. We need to control the correction in
  a norm.  To this end we recall from Section \ref{ss:fem_spaces} that
  $\Qn=\operatorname{span}\{Q^n_1,\ldots,Q^n_{N_n}\}$ for a certain locally
  supported basis. Then, thanks to properties of the discrete Bogovski{\u\i}
  operator and Corollary~\ref{c:Dbogovskii}, we have that
    \begin{align*}
      \beta_r\norm[1,r]{\vec{\Psi}^{n,j}}&\leq
       \sup_{Q\in\Qn}\frac{\scp{\divo \vecE^{n,j}}{Q}}{\norm[r']{Q}}
     =\sup_{Q\in\Qn}\frac{\scp{\divo \vecE^{n,j}-\divo
          \vecE^n}{Q}}{\norm[r']{Q}}\\
      &=\sup_{Q=\sum_{i=1}^{N_n}\rho_iQ_i^n}\Bigg( \sum_{\supp
        Q_i^n\subset \{\vecE^{n,j}=\vecE^{n}\}} \frac{\scp{\divo
          \vecE^{n,j}-\divo \vecE^n}{\rho_iQ_i^n}}{\norm[r']{Q}}\\
      &\hspace{2.5cm}+ \sum_{\supp
        Q_i^n\cap\{\vecE^{n,j}\neq\vecE^{n}\}\neq\emptyset}
      \frac{\scp{\divo \vecE^{n,j}-\divo
          \vecE^n}{\rho_iQ_i^n}}{\norm[r']{Q}}\Bigg)
      \\
      &=\sup_{Q=\sum_{i=1}^{N_n}\rho_iQ_i^n}\Bigg( \sum_{\supp
        Q_i^n\cap\{\vecE^{n,j}\neq\vecE^{n}\}\neq\emptyset}
      \frac{\scp{\divo \vecE^{n,j}-\divo
          \vecE^n}{\rho_iQ_i^n}}{\norm[r']{Q}}\Bigg)
      \\
      &=\sup_{Q=\sum_{i=1}^{N_n}\rho_iQ_i^n}\Bigg( \sum_{\supp
        Q_i^n\cap\{\vecE^{n,j}\neq\vecE^{n}\}\neq\emptyset}
      \frac{\scp{\divo \vecE^{n,j}}{\rho_iQ_i^n}}{\norm[r']{Q}}\Bigg)
      \\
      &\le \norm[r]{\divo
        \vecE^{n,j}\chi_{\Omega^n_{\{\vecE^{n,j}\neq\vecE^n\}}}}\sup_{Q=\sum_{i=1}^{N_n}\rho_iQ_i^n}\frac{\norm[r']{\sum_{\supp
        Q_i^n\cap\{\vecE^{n,j}\neq\vecE^{n}\}\neq\emptyset}\rho_iQ_i^n}}{\norm[r']{Q}}
      \\
      &
        \leq  c\,\norm[r]{\divo
        \vecE^{n,j}\,\chi_{\Omega^n_{\{\vecE^{n,j}\neq\vecE^n\}}}}\le
      c\,\norm[r]{\nabla
        \vecE^{n,j}\,\chi_{\Omega^n_{\{\vecE^{n,j}\neq\vecE^n\}}}},
  \end{align*}
  where $\chi_{\Omega^n_{\{\vecE^{n,j}\neq\vecE^n\}}}$ is the characteristic function of the
  set
  \begin{align*}
    \Omega^n_{\{\vecE^{n,j}\neq\vecE^n\}}\definedas\bigcup\left\{\Omega^n_\elm\mid
      \elm\in\gridn~\text{such that}~\elm\subset
      \overline{\{\vecE^{n,j}\neq\vecE^n\}} \right\}.
  \end{align*}
  Note that in the penultimate step of the above estimate we have used
  norm equivalence
  on the reference space $\hat\P_\Q$ from \eqref{df:Qn}. In
  particular, we see by means of standard scaling arguments that for
  $Q=\sum_{i=1}^{N_n}\rho_iQ_i^n$  the norms
  \begin{align*}
    Q \mapsto \Big(\sum_{i=1}^{N_n}\abs{\rho_i}^{r'}\norm[r']{Q_i^n}^{r'}\Big)^{1/r'}
    \qquad\text{and}
    \qquad Q \mapsto \norm[r']{Q}
  \end{align*}
  are equivalent with constants depending on the shape-regularity of
  $\mathbb{G}$ and $\hat\P_\Q$. This directly implies the desired estimate.

  Observe that
  $\abs{\Omega_\elm^n}\le c\abs{\elm}$ for all  $\elm\in\gridn$,
  $n\in\N$, with a shape-dependent constant $c>0$; hence,
  $\abs{\Omega^n_{\{\vecE^{n,j}\neq\vecE^n\}}}\le
  c\abs{{\{\vecE^{n,j}\neq\vecE^n\}}}$, and
  it follows from
  Theorem~\ref{thm:dTlest} and Corollary~\ref{cor:dremlip}\ref{itm:dremlip5} that
   \begin{align}
     \label{est:Psi}
     \beta_r\norm[1,r]{\vec{\Psi}^{n,j}}
     &\leq c\,\norm[r]{\lambda_{n,j}\chi_{\Omega^n_{\{\vecE^{n,j}\neq\vecE^n\}}}
     } \leq c\, 2^{-j/r} \norm[r]{\nabla \vec{E}^n}.
   \end{align}
   \new{Moreover, we have from Corollaries
  \ref{cor:dremlip} and~\ref{c:Dbogovskii}, that}
  \begin{subequations}\label{eq:PhiPsi}
    \begin{alignat}{3}
      \vec{\Phi}^{n,j},\vec{\Psi}^{n,j}&\weak 0&\qquad& \text{weakly
        in } W^{1,s}_0(\Omega)^d&\quad&\text{for
        all}~s\in[1,\infty),\label{eq:PhiPsia}
      \\
      \vec{\Phi}^{n,j},\vec{\Psi}^{n,j}&\to 0&\qquad &\text{strongly
        in } L^s(\Omega)^d&\quad&\text{for
        all}~s\in[1,\infty),\label{eq:PhiPsib}
    \end{alignat}
  \end{subequations}
  as $n\to\infty$.

  \textbf{Step 2:} We claim that
  \begin{align*}
    \underset{n\to\infty}{\lim\sup}
    \int_{\{\vecE^n=\vecE^{n,j}\}}\abs{a_n}\,\d x\leq c\,2^{-j/r},
  \end{align*}
  with a constant $c>0$ independent of $j$.
  To see this we first observe that $\abs{a_n}=a_n+2a_n^-$
  with the usual notation $a_n^-(x)=\max\{-a_n(x),0\}$, $x\in\Omega$.
  Therefore, we have that
  \begin{align}\label{eq:an=}
    \begin{split}
      \underset{n\to\infty}{\lim\sup}
      \int_{\{\vecE^n=\vecE^{n,j}\}}\abs{a_n}\,\d x&\le\underset{n\to\infty}{\lim\sup}
      \int_{\{\vecE^n=\vecE^{n,j}\}}a_n\,\d x
      \\
      &\qquad+2\,\underset{n\to\infty}{\lim\sup}
      \int_{\{\vecE^n=\vecE^{n,j}\}}a_n^-\,\d x.
    \end{split}
  \end{align}
  We bound the two terms on the right-hand side separately.
  As a consequence of \eqref{eq:an-bn}
  and the fact that $b_n(x)\geq0$ for a.e. $x\in\Omega$ (cf.
  \eqref{df:bn}) it follows that
  \begin{align*}
    \int_{\{\vecE^n=\vecE^{n,j}\}}a_n^-\,\d x\le
    \int_\Omega a_n^-\,\d x \leq \int_\Omega \abs{a_n-b_n}\,\d x
    \to 0, \qquad\text{as } n\to\infty.
  \end{align*}
  The bound on the first term on the right-hand side of \eqref{eq:an=} is more involved. In particular,
  recalling the definitions \eqref{df:an} and \eqref{df:PhiPsi} we
  have that
  \begin{align*}
    &\int_{\{\vecE^n=\vecE^{n,j}\}}a_n\,\d x
    \\
    &\qquad=\int_{\{\vecE^n=\vecE^{n,j}\}}
    \left(\bSn(\cdot,\nablas\Un)-\bsS^\ast(\cdot,\nablas
      \vecu)\right): (\nablas \Pndiv\vecu-\nablas \vecu)\,\dx
    \\
    &\qquad\qquad+\int_\Omega
    \bSn(\cdot,\nablas \Un): \nablas \vec{\Phi}^{n,j}\,\d x +
    \int_\Omega \bSn(\cdot,\nablas \Un): \nablas
    \vec{\Psi}^{n,j}\,\d x
    \\
    &\qquad\qquad -\int_\Omega \bsS^\ast(\cdot,\nablas \vecu): \nablas
    \vecE^{n,j}\,\d x
    \\
    &\qquad\qquad+ \int_{\{\vecE^n\neq\vecE^{n,j}\}}
    \big(\bsS^\ast(\cdot,\nablas\vecu)-\bSn(\cdot,\nablas \Un)\big):
    \nablas \vecE^{n,j}\,\d x
    \\
    &\qquad= \text{I}^{n}+
    \text{II}^{n,j}+\text{III}^{n,j}+\text{IV}^{n,j}+\text{V}^{n,j}.
  \end{align*}
  Thanks to \eqref{eq:approxVn} and \eqref{eq:bound} we have that, as $n \rightarrow \infty$,
  \begin{align*}
    \abs{\text{I}^{n}}&\le \int_{\{\vecE^n=\vecE^{n,j}\}}
    \abs{\bSn(x,\nablas\Un(x))-\bsS^\ast(x,\nablas
      \vecu(x))}\abs{\nablas \Pndiv\vecu(x)-\nablas \vecu(x)}\,\dx
    \\
    &\le \norm[r']{\bSn(\cdot,\nablas\Un(\cdot))-\bsS^\ast(\cdot,\nablas
      \vecu(\cdot)}\norm[r]{\nablas \Pndiv\vecu-\nablas \vecu}\to 0.
  \end{align*}
  In order to estimate $\text{II}^{n,j}$ we recall that
  $\vec{\Phi}^{n,j}\in\Vndiv$ is discretely divergence-free, and
  we can therefore use it as a test function in \eqref{eq:Vn0} to deduce that
  \begin{align*}
    \text{II}^{n,j}=-\Trilin{\Un}{\Un}{\vec{\Phi}^{n,j}}
    +\scp{\vecf}{\vec{\Phi}^{n,j}}\to 0\qquad\text{as}~ n\to\infty.
  \end{align*}
  Indeed, the second term vanishes thanks to \eqref{eq:PhiPsia}. The
  first term vanishes by arguing as in \eqref{eq:Bn->B} --- observe
  that for \eqref{eq:B1} the weak convergence \eqref{eq:PhiPsia} of
  $\vec{\Phi}^{n,j}$ is sufficient.
  The term $\text{III}^{n,j}$ can be
  bounded by means of \eqref{est:Psi}; in particular,
   \begin{align*}
     \underset {n\to\infty} {\lim\,\sup}~\abs{\text{III}^{n,j}} \leq
     \underset {n\to\infty} {\lim\,\sup}\,
     \norm[r']{\bsS(\cdot,\nablas
       \Un)}\norm[r]{\nablas\vec{\Psi}^{n,j}}\le c\, 2^{-j/r},
   \end{align*}
   where we have used \eqref{eq:bound}.
   Corollary \ref{cor:dremlip} implies that
   \begin{align*}
     \lim_{n\to\infty}\text{IV}^{n,j}=0.
   \end{align*}
   Finally, by \eqref{eq:bound} and Corollary \ref{cor:dremlip}, we have that
   \begin{align*}
     \underset {n\to\infty} {\lim\,\sup}~\abs{\text{V}^{n,j}}&\leq
     \underset {n\to\infty}
     {\lim\,\sup}\,\big(\norm[r']{\bsS^\ast(\cdot,\nablas
       \vecu)}+\norm[r']{\bSn(\cdot,\nablas\Un)}
     \big)\norm[r]{\nablas
       \vecE^{n,j}\chi_{\{\vecE^n\neq\vecE^{n,j}\}}}
     \\
     &\leq c\,2^{-j/r}.
   \end{align*}
   In view of \eqref{eq:an=} this completes Step 2.

  \textbf{Step 3:}   
  We prove, for any $\vartheta\in(0,1)$, that
  \begin{align*}
    \lim_{n\to\infty}\int_\Omega\abs{a_n}^\vartheta\,\d x=0.
  \end{align*}
  Using H{\"o}lder's inequality, we easily obtain that
  \begin{align*}
    \int_\Omega\abs{a_n}^\vartheta\,\d
    x&=\int_{\{\vecE^n=\vecE^{n,j}\}}\abs{a_n}^\vartheta\,\d x
    +\int_{\{\vecE^n\neq\vecE^{n,j}\}}\abs{a_n}^\vartheta\,\d x
    \\
    &\leq
    \abs{\Omega}^{1-\vartheta}
    \left(\int_{\{\vecE^n=\vecE^{n,j}\}}\abs{a_n}\,\d x\right)^{\vartheta}
    + \left(\int_\Omega\abs{a_n}\,\d x\right)^{\vartheta}
    \abs{\{\vecE^n\neq\vecE^{n,j}\}}^{1-\vartheta}.
  \end{align*}
  Thanks to \eqref{eq:bound}, we have that
  $(\int_\Omega\abs{a_n}\,\d x)^{\vartheta}$ is bounded uniformly in $n$
  and by Corollary \ref{cor:dremlip} we have that
  \begin{align*}
    \abs{\{\vecE^n\neq\vecE^{n,j}\}}\leq c\,
    \frac{\norm[1,r]{\vecE^n}^r}{\lambda_{n,j}^r}
    \leq \frac{c}{2^{2^jr}},
  \end{align*}
  where we have used that $\{\vecE^n\}_{n\in\N}$ is
  bounded in $W^{1,r}_0(\Omega)^d$ according to \eqref{eq:bound} and
  Assumption \ref{ass:Pndiv}. Consequently, from Step 2 we deduce that
  \begin{align*}
    \underset {n\to\infty}
    {\lim\,\sup}\int_\Omega\abs{a_n}^\vartheta\,\d x&\le
    c\abs{\Omega}^{1-\vartheta} 2^{-j\vartheta/r} +
    \frac{c}{2^{2^jr(1-\vartheta)}}.
  \end{align*}
  The left-hand side is independent of $j$ and we can thus pass to the
  limit $j\to\infty$. This proves the assertion and
  actually implies that $a_n\to 0$ in measure as
  $n\to\infty$. According to \eqref{eq:an-bn} we have that $b_n\to0$ in
  measure and thus we have completed the proof of Lemma \ref{l:bn->0}.

\subsection{Proof of Lemma~\ref{l:young_meas}\label{ss:young_meas}}
The proof of this Lemma is given in~\cite{BulGwiMalSwi:09}. In order to keep the paper
self-contained, we shall reproduce it here.

The assertion is an immediate consequence of the result on
Young measures from~\cite{Gwiazda:05} stated in
Theorem~\ref{t:young_meas}. It therefore
suffices to check the  assumptions therein. The first assumption has already been
verified in~\eqref{eq:nun->nu}.

\textbf{Step 1:} We prove that the sequence $\{\nu^n\}_{n\in\N}$
  satisfies the tightness condition. From the definition of
  $\nu_x^n$ (cf. \eqref{df:nun})  it follows  that
  \begin{align*}
    \gamma^n(x)\definedas
    \max_{\vec{\zeta}\in\supp\nu_x^n}\abs{\vec{\zeta}}=
      \max_{\vec{\zeta}\in\supp\mu_x^n}\abs{\Gx(\vec{\zeta})} \leq
      \max_{\vec{\zeta}\in\supp\mu_x^n}
      \left(\abs{\vec{\zeta}} +\abs{\bsS^\ast(x,\vec{\zeta})}\right).
  \end{align*}
  We deduce from the inclusion $\supp \mu_x^n\subset B_{1/n}(\nablas \vecVn(x))$
  that $\norm[s]{\gamma^n}\le c$ for some constant~$c>0$ and
  $s=\max\{r,r'\}>1$.
  Since $\Omega$ is bounded, the sequence is uniformly bounded  in
  $L^1(\Omega)$,  and for $M>0$ we have
  \begin{align*}
    \abs{\big\{x\in\Omega\colon \supp\nu_x^n \setminus B_{M}(0)\big\}}
    &=\abs{\big\{x\in\Omega\colon \gamma^n(x)>M\big\}}
    \le \int_\Omega\frac{\gamma^n(x)}M\,\d x\le \frac{c}{M}.
  \end{align*}
  This yields the tightness of $\{\nu^n\}_{n\in\N}$ and it follows
  from part \ref{i} of Theorem \ref{t:young_meas} that $\nu_x$ is a
  probability measure, \ie $\norm[\Mspace]{\nu_x}=1$ for a.e. $x\in\Omega$.

\textbf{Step 2:}
We turn to proving \eqref{eq:bn->b}. Recalling \eqref{df:SUbc}, the
assertion follows if there exists a nonincreasing sequence of
measurable subsets $\{E_i\}_{i\in\N}$ with $\abs{E_i}\to0$ as
$i\to\infty$, such that the function
\begin{align*}
  h(x,\vec{\zeta})\definedas
  \left(\bss(x,\vec{\zeta})-\bsS^\ast(x,\nablas
    \vecu(x))\right):(\bsd(x,\vec{\zeta})-\nablas\vecu(x))
\end{align*}
satisfies \eqref{eq:carath} on $A=\Omega\setminus E_i$ for each $i\in\N$.
This can be seen as follows. From \eqref{eq:bound} it follows that
$\norm[r]{\nablas\Un}+\norm[r]{\nablas\vecu}$ is bounded uniformly in
$n\in\N$. Consequently, the sequence $\{c_n\}_{n\in\N}$, defined by
\begin{multline}\label{eq:cn}
  c_n(x)\definedas c\Big(\abs{\nablas \Un(x)}^{r-1}+\abs{\nablas
    \vecu(x)}^{r-1}+\frac1{n^{r-1}}+k^{\frac1{r'}}(x)\Big)
  \\\times\Big(\abs{\nablas \Un(x)}+\abs{\nablas
    \vecu(x)}+\frac1n\Big),
\end{multline}
is bounded in $L^1(\Omega)$, where $k\in L^{r'}(\Omega)$ from
\eqref{eq:S*coerc} and $c>0$ is a constant
 to be chosen later.
Hence, Chacon's biting lemma (Lemma
\ref{l:biting}) implies that there exists a nonincreasing sequence of
measurable subsets $\{E_i\}_{i\in\N}$ with $\abs{E_i}\to0$ as
$i\to\infty$ such that $\{c_n\}_{n\in\N}$ is weakly precompact in
$L^1(\Omega\setminus E_i)$ for each $i\in\N$. Fix $i\in\N$ and set
$\omega\definedas \Omega\setminus E_i$. Thanks to the
de la Valle{\'e}-Poussin theorem (see, \cite{Meyer:66}), there exists
a nonnegative increasing
convex function $\phi:\R_+\to\R_+$ such that
\begin{align}\label{eq:phi}
  \frac{\phi(t)}t\to \infty\quad\text{as}\quad t\to\infty\qquad\text{and}\qquad\sup_{n\in\N} \int_\omega\phi(\abs{c_n})\,\d x<\infty.
\end{align}
Let us also define
\begin{align*}
  H(x,\vec{\zeta})\definedas
  \left(\bsS^\ast(x,\vec{\zeta})-\bsS^\ast(x,\nablas\vecu(x))\right):
  (\vec{\zeta}-\nablas \vecu(x)).
\end{align*}
By a simple substitution in the spirit of \eqref{df:SUb} it follows that
\begin{align*}
  \sup_{n\in\N}&\int_\omega\int_{\left\{\vec{\zeta}\in\Rdds\colon
      \abs{h(x,\vec{\zeta})}>R\right\}}h(x,\vec{\zeta})\,\d \nu^n_x(\vec{\zeta})\,\d x
  \\
  &=\sup_{n\in\N}\int_\omega\int_{\left\{\vec{\xi}\in\Rdds\colon
      \abs{H(x,\vec{\xi})}>R\right\}}H(x,\vec{\xi})\,\d \mu^n_x(\vec{\xi})\,\d x
  \\
  &\le \sup_{t\ge R}\frac{t}{\phi(t)}\underbrace{\sup_{n\in\N}
  \int_\omega\int_{\left\{\vec{\xi}\in\Rdds\colon
      \abs{H(x,\vec{\xi})}>R\right\}}
  \phi\big(H(x,\vec{\xi})\big)\,\d \mu^n_x(\vec{\xi})\,\d x}_{\asdefined J_R}\!.
\end{align*}
Thanks to the properties \eqref{eq:phi} of $\phi$ the assertion follows once $J_R$
has been shown to remain bounded. To this end, we observe that
\begin{align*}
  J_R&\le \sup_{n\in\N}\int_\omega\, \sup_{\vec{\xi}\in
    B_{\frac1n}(\nablas\vecu(x))}\, \phi\big(H(x,\vec{\xi})\big)\,\d x
  \le  \sup_{n\in\N}\int_\omega\phi(\abs{c_n})\,\d x,
\end{align*}
where we have used that we can choose the constant in \eqref{eq:cn} so that
\begin{align*}
  H(x,\vec{\xi})\leq c \left(\abs{\vec{\xi}}^{r-1}+\abs{\nablas
      \vecu(x)}^{r-1}+k(x)\right)\left(\abs{\vec{\xi}} +\abs{\nablas\vecu(x)}\right).
\end{align*}
The assertion then follows from \eqref{eq:phi}.

Finally the identities for $\nablas\vecu$ and $\bsS$ follow similarly
from the representations \eqref{df:SUba} and \eqref{df:SUbb} and the
uniqueness of the weak limits \eqref{eq:weakUn} and
\eqref{eq:weakSn}.

Thus we have completed the proof of Lemma~\ref{l:young_meas}.


\section{Conclusions}\label{sec:conclusion}
  We have established the convergence of finite element approximations of implicitly constituted
  power-law-like models for viscous incompressible fluids. A key new technical tool
  in our analysis was a finite element counterpart of the Acerbi--Fusco Lipschitz
  truncation of Sobolev functions, which was used in combination with a variety of
  weak compactness techniques, including Chacon's biting lemma and Young measures.
  An interesting direction for future research is the extension of the results
  obtained herein to unsteady implicitly constituted models of incompressible
  fluids.

\section*{Acknowledgments}
The research of Christian Kreuzer was supported by the DFG research grant KR 3984/1-1.
Lars Diening and Endre S{\"u}li were supported by the EPSRC Science and Innovation award to the Oxford Centre
for Nonlinear PDE (EP/E035027/1).

\providecommand{\bysame}{\leavevmode\hbox to3em{\hrulefill}\thinspace}
\providecommand{\MR}{\relax\ifhmode\unskip\space\fi MR }
\providecommand{\MRhref}[2]{%
  \href{http://www.ams.org/mathscinet-getitem?mr=#1}{#2}
}
\providecommand{\href}[2]{#2}

\section*{Appendix: Auxiliary comments on Lipschitz truncation}
\label{sec:appendix}
Although
similar techniques were used in~\cite{BreDieFuc:12}
to prove the properties of the Lipschitz truncation,
we decided to present a complete
proof of Theorem~\ref{thm:Tlest} for the following two reasons:
\begin{itemize}[leftmargin=0.5cm]
\item In contrast with the Lipschitz truncation in~\cite{BreDieFuc:12},
  the Lipschitz truncation in~\eqref{eq:lip} preserves boundary
  values. This requires changes to the proof that are not always obvious.
\item The concept of Lipschitz truncation seems
  to be new to the numerical analysis community. For this reason we have aimed
  to keep the presentation as self-contained as possible.
\end{itemize}
Recall the notational conventions introduced in Section \ref{ss:Lipschitz} prior to Theorem \ref{thm:Tlest},
and the definition \eqref{eq:lip} of the Lipschitz truncation. We start with some basic estimates.

\begin{lem}
  \label{lem:QjPoin}Let $\lambda>0$ and $\vecv\in
  W^{1,1}_0(\Omega)^d$ and let $\{\vecv_j\}_{j\in\N}\subset\Rd$ be
  defined as in \eqref{eq:lip2}. We then have, for all $j\in\N$, that
  \begin{enumerate}[leftmargin=1cm,itemsep=1ex,label={\rm(\alph{*})}]
  \item
    \label{itm:QjPoin1}
    $\fint_{\mcQ_j^{\ast\ast}}\left|\frac{\vecv-\vecv_j}{\ell_j}\right|\,\dx
    \leq c\, \fint_{\mcQ_j^{\ast\ast}} |\nabla \vecv|\,\dx \leq c\,
    M(\nabla \vecv)(y)$ for all $y \in \mcQ_j^{\ast\ast}$;
  \item
    \label{itm:QjPoin1b}
   $\fint_{\mcQ_j^{\ast\ast}} \abs{\nabla \vecv} \,\dx \leq c\,\lambda$;
  \item
    \label{itm:QjPoin2}
    for $k \in \N$ with $\mcQ_j^\ast \cap \mcQ_k^\ast \neq
    \emptyset$, we have
    \begin{align*}
      \abs{\vecv_j - \vecv_k} \leq c
      \fint_{\mcQ_j^{\ast\ast}}\left|\vecv-\vecv_j\right|\,\dx + c
      \fint_{\mcQ_k^{\ast\ast}}\left|\vecv-\vecv_k\right|\,\dx;
    \end{align*}
  \item \label{itm:QjPoin3} for $k \in \N$ with $\mcQ_j^\ast \cap \mcQ_k^\ast
    \neq \emptyset$ we have
    $\abs{\vecv_j - \vecv_k} \leq c\, \ell_j\, \lambda$.
  \end{enumerate}
\end{lem}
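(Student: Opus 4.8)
The plan is to prove parts \ref{itm:QjPoin1} and \ref{itm:QjPoin1b} directly and then deduce \ref{itm:QjPoin2} and \ref{itm:QjPoin3} from them; throughout, recall that $\vecv$, extended by zero outside $\Omega$, belongs to $W^{1,1}(\Rd)^d$, so that \Poincare-type inequalities may be applied on arbitrary cubes, whether or not they meet $\Omega$. For \ref{itm:QjPoin1} I would distinguish the two alternatives in the definition \eqref{eq:lip2}. If $\mcQ_j^\ast\subset\Omega$, then $\vecv_j$ is by construction the integral mean of $\vecv$ over $\mcQ_j^{\ast\ast}$, and the \Poincare inequality on a cube gives at once
\begin{align*}
  \fint_{\mcQ_j^{\ast\ast}}\abs{\vecv-\vecv_j}\,\dx\le c\,\ell_j\fint_{\mcQ_j^{\ast\ast}}\abs{\nabla\vecv}\,\dx.
\end{align*}
If instead $\mcQ_j^\ast\not\subset\Omega$, then $\vecv_j=\vec 0$; I pick $x_0\in\mcQ_j^\ast\setminus\Omega$, note that a ball $B$ of radius comparable to $\ell_j$ centred at $x_0$ lies inside $\mcQ_j^{\ast\ast}$ (since $\mcQ_j^{\ast\ast}$ is a proper concentric dilate of $\mcQ_j^\ast$), and invoke the uniform lower measure-density of $\Rd\setminus\Omega$, valid because $\Omega$ is a bounded Lipschitz domain, to get $\abs{B\setminus\Omega}\ge c\,\ell_j^d\ge c\,\abs{\mcQ_j^{\ast\ast}}$. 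As $\vecv$ vanishes on $B\setminus\Omega$, the \Poincare inequality with the mean taken over $B\setminus\Omega$ instead of over the cube --- which costs only the factor $\abs{\mcQ_j^{\ast\ast}}/\abs{B\setminus\Omega}$ --- yields $\fint_{\mcQ_j^{\ast\ast}}\abs{\vecv}\,\dx\le c\,\ell_j\fint_{\mcQ_j^{\ast\ast}}\abs{\nabla\vecv}\,\dx$, i.e.\ once more the first inequality of \ref{itm:QjPoin1}. For the second inequality, for any $y\in\mcQ_j^{\ast\ast}$ one has $\mcQ_j^{\ast\ast}\subset B_{c\,\ell_j}(y)$ with $\abs{B_{c\,\ell_j}(y)}\le c\,\abs{\mcQ_j^{\ast\ast}}$, hence $\fint_{\mcQ_j^{\ast\ast}}\abs{\nabla\vecv}\,\dx\le c\,\fint_{B_{c\,\ell_j}(y)}\abs{\nabla\vecv}\,\dx\le c\,M(\nabla\vecv)(y)$.

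For \ref{itm:QjPoin1b}, the Whitney property \ref{itm:Wbnd} gives ${\rm dist}(\mcQ_j,\partial\mcU_\lambda(\vecv))\le 32\sqrt d\,\ell_j$, so there exists $z\in\partial\mcU_\lambda(\vecv)$ with ${\rm dist}(z,\mcQ_j)\le c\,\ell_j$ (one may equivalently take the point supplied by \eqref{eq:theta_d}); since $\mcU_\lambda(\vecv)$ is open and $M(\nabla\vecv)$ is lower semicontinuous, $z\notin\mcU_\lambda(\vecv)$, that is $M(\nabla\vecv)(z)\le\lambda$. Because $\mcQ_j^{\ast\ast}\subset B_{c\,\ell_j}(z)$, the estimate just used for the second part of \ref{itm:QjPoin1} gives $\fint_{\mcQ_j^{\ast\ast}}\abs{\nabla\vecv}\,\dx\le c\,M(\nabla\vecv)(z)\le c\,\lambda$.

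For \ref{itm:QjPoin2} the only additional input is geometric: when $\mcQ_j^\ast\cap\mcQ_k^\ast\neq\emptyset$, by \ref{itm:W3} the side lengths $\ell_j$ and $\ell_k$ are comparable and a sufficiently small cube centred at a common point of $\mcQ_j^\ast\cap\mcQ_k^\ast$ sits inside both $\mcQ_j^{\ast\ast}$ and $\mcQ_k^{\ast\ast}$ (again because these are proper dilates of $\mcQ_j^\ast$, $\mcQ_k^\ast$), so that $O:=\mcQ_j^{\ast\ast}\cap\mcQ_k^{\ast\ast}$ satisfies $\abs{O}\ge c\,\ell_j^d$ and hence $\abs{O}\ge c\,\abs{\mcQ_j^{\ast\ast}}$, $\abs{O}\ge c\,\abs{\mcQ_k^{\ast\ast}}$. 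Averaging over $O$ and using the triangle inequality (the integrand $\abs{\vecv-\vecv_j}$ being the correct one also when $\vecv_j=\vec 0$),
\begin{align*}
  \abs{\vecv_j-\vecv_k}&\le\fint_O\abs{\vecv-\vecv_j}\,\dx+\fint_O\abs{\vecv-\vecv_k}\,\dx\\
  &\le\frac{\abs{\mcQ_j^{\ast\ast}}}{\abs{O}}\fint_{\mcQ_j^{\ast\ast}}\abs{\vecv-\vecv_j}\,\dx+\frac{\abs{\mcQ_k^{\ast\ast}}}{\abs{O}}\fint_{\mcQ_k^{\ast\ast}}\abs{\vecv-\vecv_k}\,\dx,
\end{align*}
which is \ref{itm:QjPoin2}. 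Finally, \ref{itm:QjPoin3} follows by chaining \ref{itm:QjPoin2}, the first inequality of \ref{itm:QjPoin1}, \ref{itm:QjPoin1b} and \ref{itm:W3}: $\abs{\vecv_j-\vecv_k}\le c\,\ell_j\fint_{\mcQ_j^{\ast\ast}}\abs{\nabla\vecv}\,\dx+c\,\ell_k\fint_{\mcQ_k^{\ast\ast}}\abs{\nabla\vecv}\,\dx\le c\,(\ell_j+\ell_k)\,\lambda\le c\,\ell_j\,\lambda$.

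The step I expect to be the main obstacle is the boundary case of \ref{itm:QjPoin1}, namely the Whitney cubes with $\mcQ_j^\ast\not\subset\Omega$: there $\vecv_j$ is declared to be $\vec 0$ rather than a mean value, so one must replace the cube-mean \Poincare inequality by one anchored at the zero set $\Rd\setminus\Omega$, which is precisely where the exterior measure-density of $\Omega$ --- and hence the hypothesis that $\Omega$ is a (polyhedral) Lipschitz domain --- enters. This is exactly the modification by which the present, boundary-value-preserving, Lipschitz truncation departs from the one in \cite{BreDieFuc:12}.
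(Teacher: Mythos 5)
Your proposal is correct and follows essentially the same route as the paper's proof: the same two-case split in part (a) (the boundary case is handled by Friedrichs' inequality on $\mcQ_j^{\ast\ast}$ in the paper, which is exactly your measure-density-plus-anchored-Poincar\'e argument in different clothing), the same use of \eqref{eq:theta_d} to locate a point where $M(\nabla\vecv)\le\lambda$ in part (b), the same averaging over the intersection $\mcQ_j^{\ast\ast}\cap\mcQ_k^{\ast\ast}$ (of measure comparable to either cube by \ref{itm:W3}) in part (c), and the same chaining in part (d). You correctly identified the boundary case of (a) as the only nontrivial point where the boundary-value-preserving modification of the truncation makes a difference.
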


\begin{proof}
  We extend $\vecv$ by zero outside $\Omega$.

  \ref{itm:QjPoin1} This statement
  is a consequence of \Poincare{}'s inequality and the Friedrichs inequality.
  Indeed, recalling \eqref{eq:lip2}, for $\mcQ_j^* \subset
  \Omega$ we have by \Poincare's inequality that
  \begin{align*}
    \fint_{\mcQ_j^{\ast\ast}}\left|\frac{\vecv-\vecv_j}{\ell_j}\right|\,\dx&\leq
    c\, \fint_{\mcQ_j^{\ast\ast}} \abs{\nabla \vecv}\,\dx \leq
    c\, \fint_{B_{\diam(\mcQ_j^{**})}(y)} \abs{\nabla \vecv}\,\dx\le
    c\, M(\nabla
    \vecv)(y)
  \end{align*}
  for all $y \in \mcQ_j^{\ast\ast}$; the constant $c$ depends only on $d$.

  In the case $\mcQ_j^* \nsubset \Omega$, it follows from the fact
  that $\Omega$ is a Lipschitz domain and
  $\mcQ_j^{**}=\sqrt{\frac98}\mcQ_j^*$, that $\abs{Q_j^{\ast\ast}
    \setminus \Omega} \geq c\abs{\mcQ_j^\ast}$, with a constant $c>0$
  depending on~$\Omega$.  Hence $\vecv$ is zero on a portion of
  $\mcQ^{\ast\ast}_j$ whose measure is bounded below by a positive constant,
  which depends on the Lipschitz constant of $\partial\Omega$.
  Consequently, we can apply Friedrichs' inequality (cf.~\cite[Lemma
  1.65]{MalyZiemer:97}) to deduce that
  \begin{align*}
    \fint_{\mcQ_j^{\ast\ast}}\left|\frac{\vecv}{\ell_j}\right|\,\dx&\leq c\,
    \fint_{\mcQ_j^{\ast\ast}} \abs{\nabla \vecv}\,\dx \leq c\, M(\nabla
    \vecv)(y)\quad \forall y \in \mcQ_j^{\ast\ast}.
  \end{align*}

  \ref{itm:QjPoin1b}
  It follows from \ref{itm:Wbnd} that
  $(\theta_d \mcQ_j) \cap (\R^d \setminus \mathcal{U}_\lambda(\vecv)) =(\theta_d \mcQ_j) \cap \{M(\nabla
  \vecv) \leq \lambda\}
  \neq \emptyset$; compare with \eqref{df:UH}. For $z \in
  (\theta_d \mcQ_j) \cap \{M(\nabla \vecv) \leq
  \lambda\}$ let $R_j :=\theta_d
  \sqrt{d}\frac98\ell_j=\theta_d\diam(\mcQ^{\ast\ast}_j)$; then,
  $\theta_d\mcQ_j^{\ast\ast}\subset B_{R_j}(z)$. Consequently,
  \begin{align*}
    \fint_{\mcQ_j^{\ast\ast}} \abs{\nabla \vecv} \,\dx &\leq c\, \fint_{\theta_d
      \mcQ_j^{\ast\ast}} \abs{\nabla \vecv} \,\dx \leq c\, \fint_{B_{R_j}(z)}
    \abs{\nabla \vecv} \,\dx \leq c\, M(\nabla \vecv)(z) \leq c\,
    \lambda.
  \end{align*}

  \ref{itm:QjPoin2} Observe that
  $\mcQ_j^\ast\cap\mcQ_k^\ast\neq\emptyset$ is equivalent to
  $\mcQ_j\cap\mcQ_k\neq\emptyset$ and hence we obtain from
  \ref{itm:W3} and
  $\mcQ_i^{\ast\ast}=\sqrt{\frac98}\mcQ_i^\ast$, $i\in\N$, that
  $$\abs{\mcQ_j^{\ast\ast} \cap \mcQ_k^{\ast\ast}} \geq (4\sqrt{2})^{-d}
  \max
  \{\abs{\mcQ_j^*}, \abs{\mcQ_k^*}\}.$$
  Therefore, there exists a constant $c>0$, depending on $d$, such that
  \begin{align*}
    \abs{\vecv_j - \vecv_k} &\leq \fint_{\mcQ_j^{\ast\ast} \cap
      \mcQ_j^{**}} \abs{\vecv - \vecv_j}\, \dx +
    \fint_{\mcQ_j^{\ast\ast} \cap \mcQ_j^{**}} \abs{\vecv - \vecv_k}\,
    \dx
    \\
    &\leq c \fint_{\mcQ_j^{\ast\ast}}\left|\vecv_j-\vecv\right|\,\dx + c
    \fint_{\mcQ_k^{\ast\ast}}\left|\vecv-\vecv_k\right|\,\dx.
  \end{align*}

  \ref{itm:QjPoin3} The claim is a combination of~\ref{itm:QjPoin2},
  \ref{itm:QjPoin1}, \ref{itm:QjPoin1b} and \ref{itm:W3}.
\end{proof}

The next result proves that the Lipschitz truncation is a
proper Sobolev function.
\begin{lem}
  \label{lem:TlW110}
  Let $\lambda>0$, $\vecv \in W^{1,1}_0(\Omega)^d$ and  let
  $\vecv_\lambda$ be defined as in \eqref{eq:lip}. Then,
   $\vecv_\lambda-\vecv = \sum_{j
    \in \N} \psi_j (\vecv_j -\vecv) \in
  W^{1,1}_0(\mcU_\lambda(\vecv) \cap \Omega)^d$.
\end{lem}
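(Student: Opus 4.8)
The plan is to identify $\vecv_\lambda-\vecv$ with the sum of the series $\sum_{j\in\N}\psi_j(\vecv_j-\vecv)$, to show that this series converges absolutely in $W^{1,1}(\Rd)^d$, and to check that each summand — and hence the limit — lies in $W^{1,1}_0(\mcU_\lambda(\vecv)\cap\Omega)^d$.

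First I would record the pointwise description. Off $\mcU_\lambda(\vecv)$ all $\psi_j$ vanish and $\vecv_\lambda=\vecv$ by \eqref{eq:lip}, while on $\mcU_\lambda(\vecv)$ the identity $\sum_j\psi_j=\chi_{\mcU_\lambda(\vecv)}$ gives $\vecv_\lambda-\vecv=\sum_j\psi_j\vecv_j-\sum_j\psi_j\vecv=\sum_j\psi_j(\vecv_j-\vecv)$; hence this holds a.e. on $\Rd$. I would then observe that $\vecv_\lambda-\vecv$ vanishes a.e. on $\mcH_\lambda(\vecv)=\Rd\setminus(\mcU_\lambda(\vecv)\cap\Omega)$: on $\{M(\nabla\vecv)\le\lambda\}$ this is immediate, and if $x\in\mcU_\lambda(\vecv)\setminus\Omega$ then every cube with $x\in\mcQ_j^\ast$ satisfies $\mcQ_j^\ast\not\subset\Omega$, so $\vecv_j=0$ by \eqref{eq:lip2} and $\vecv_\lambda(x)=0=\vecv(x)$.

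The analytic core is a per-cube estimate. By \ref{itm:Wbnd}, enlarging $\mcQ_j$ by the fixed factor $\tfrac98$ keeps $\overline{\mcQ_j^{\ast\ast}}$ inside $\mcU_\lambda(\vecv)$, and $\mcU_\lambda(\vecv)$ is itself bounded because $\nabla\vecv$ has bounded support, so that $M(\nabla\vecv)(x)\to0$ as $\abs{x}\to\infty$. For every $j$, Lemma~\ref{lem:QjPoin}\ref{itm:QjPoin1} (which is \Poincare{}'s inequality when $\mcQ_j^\ast\subset\Omega$ and Friedrichs' inequality, together with $\vecv_j=0$, otherwise) yields $\int_{\mcQ_j^{\ast\ast}}\abs{\vecv-\vecv_j}\,\dx\le c\,\ell_j\int_{\mcQ_j^{\ast\ast}}\abs{\nabla\vecv}\,\dx$. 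Since $\psi_j\in C^\infty_0(\mcQ_j^\ast)$ with $\abs{\nabla\psi_j}\le c\,\ell_j^{-1}$ and $\mcQ_j^\ast\subset\mcQ_j^{\ast\ast}$, the identity $\nabla(\psi_j(\vecv_j-\vecv))=(\nabla\psi_j)(\vecv_j-\vecv)-\psi_j\nabla\vecv$ gives
\begin{align*}
  \norm[W^{1,1}(\Rd)]{\psi_j(\vecv_j-\vecv)}\le c\,\bigl(1+\diam\mcU_\lambda(\vecv)\bigr)\int_{\mcQ_j^{\ast\ast}}\abs{\nabla\vecv}\,\dx .
\end{align*}
Summing over $j$ and using the bounded overlap $\sum_j\chi_{\mcQ_j^{\ast\ast}}\le c$ (from \ref{itm:Wnumber}) together with $\mcQ_j^{\ast\ast}\subset\mcU_\lambda(\vecv)$ gives $\sum_j\norm[W^{1,1}(\Rd)]{\psi_j(\vecv_j-\vecv)}\le c\int_{\mcU_\lambda(\vecv)}\abs{\nabla\vecv}\,\dx<\infty$.

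Finally I would assemble the pieces. Each summand $\psi_j(\vecv_j-\vecv)$ lies in $W^{1,1}_0(\mcU_\lambda(\vecv)\cap\Omega)^d$: it is supported in the compact set $\supp\psi_j\subset\mcQ_j^\ast\subset\mcU_\lambda(\vecv)$; when $\mcQ_j^\ast\subset\Omega$ one has $\psi_j\vecv_j\in C^\infty_0(\mcQ_j^\ast)$, whereas when $\mcQ_j^\ast\not\subset\Omega$ the summand is $-\psi_j\vecv$; in either case $\psi_j\vecv$ is the $W^{1,1}$-limit of $\psi_j\phi_k$ with $\phi_k\in C^\infty_0(\Omega)^d$ approximating $\vecv$, and these approximants are supported in the compact set $\supp\psi_j\cap\supp\phi_k\subset\mcQ_j^\ast\cap\Omega\subset\mcU_\lambda(\vecv)\cap\Omega$. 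By the absolute summability just shown, the partial sums are Cauchy in the Banach space $W^{1,1}_0(\mcU_\lambda(\vecv)\cap\Omega)^d$ and hence converge there; and since the series is locally finite on $\mcU_\lambda(\vecv)$ (only finitely many $\mcQ_j^{\ast\ast}$ meet a given compact subset, by \ref{itm:Wbnd} and the boundedness of $\mcU_\lambda(\vecv)$) and vanishes off $\mcU_\lambda(\vecv)$, its $W^{1,1}$-limit agrees a.e. with $\sum_j\psi_j(\vecv_j-\vecv)=\vecv_\lambda-\vecv$. I expect the only genuinely delicate point to be the cubes $\mcQ_j^\ast$ straddling $\partial\Omega$, where $\vecv_j$ must be set to $0$ and Friedrichs' rather than \Poincare{}'s inequality is used; this is exactly the modification of \eqref{eq:lip2} that preserves the homogeneous boundary values and forces the separate argument given here rather than the one in \cite{BreDieFuc:12}.
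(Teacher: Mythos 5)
Your proof is correct and follows the same overall strategy as the paper: identify $\vecv_\lambda-\vecv$ pointwise with $\sum_j\psi_j(\vecv_j-\vecv)$, estimate each summand via Lemma~\ref{lem:QjPoin} (\Poincare{} on interior cubes, Friedrichs with $\vecv_j=0$ on boundary‑straddling cubes), observe that each summand lies in $W^{1,1}_0(\mcU_\lambda(\vecv)\cap\Omega)^d$, and pass to the limit. The only real deviation is in how the convergence of the series is closed off. The paper reduces, via boundedness of $\Omega$ and Poincar\'e on $\Omega$, to absolute convergence of the gradients in $L^1(\Omega)^{d\times d}$, then passes through the bound $\int_{\mcQ_j^{\ast\ast}}\abs{\nabla\vecv}\le c\lambda\abs{\mcQ_j}$ from Lemma~\ref{lem:QjPoin}\ref{itm:QjPoin1b} and applies dominated convergence with majorant $\lambda\chi_{\mcU_\lambda(\vecv)}\in L^1(\Rd)$, which in turn relies on the weak type $(1,1)$ estimate \eqref{eq:HLmax_weak}. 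You instead prove absolute summability of the full $W^{1,1}(\Rd)$‑norms directly, using the bounded overlap $\sum_j\chi_{\mcQ_j^{\ast\ast}}\le c$ and $\nabla\vecv\in L^1(\Rd)$; this avoids both the weak type estimate and the reduction to gradients, but requires the extra observation (correct, and worth keeping explicit) that $\mcU_\lambda(\vecv)$ is bounded, hence $\ell_j\le c\,\diam\mcU_\lambda(\vecv)$ by \ref{itm:Wbnd}, in order to control the zeroth‑order term $\int\abs{\psi_j(\vecv_j-\vecv)}$. Both routes are sound; yours is marginally more elementary in that it does not invoke \eqref{eq:HLmax_weak}, while the paper's is marginally shorter because it lets \Poincare{} on $\Omega$ dispose of the function‑value estimate.
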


\begin{proof}
  It follows from \eqref{eq:lip} and properties of the partition of
  unity $\{\psi_j\}_{j\in\N}$ that $\vec{v}_\lambda - \vec{v} =
  \sum_{j \in \N} \psi_j (\vec{v}_j - \vec{v})$ pointwise on~$\R^d$
  and $\vecv_\lambda- \vecv=0$ in the complement
  of~$\mcU_\lambda(\vecv)$. Moreover, we have that $\psi_j (\vecv_j
  - \vecv) \in W^{1,1}_0(\mcU_\lambda(\vecv) \cap \Omega)^d$. Indeed,
  for $\mcQ_j^* \subset \Omega$ this follows from the fact that $\psi \in
  C^\infty_0(\mcQ_j^*)$. If on the other hand $\mcQ_j^* \not\subset
  \Omega$, then this follows from $\vecv_j=0$ and $\vecv \in
  W^{1,1}_0(\mcU_\lambda(\vecv) \cap \Omega)$. We need to show that the sum
  converges in $W^{1,1}_0(\mcU_\lambda(\vecv) \cap \Omega)^d$.  Since
  $\Omega$ is bounded, it suffices to prove that the sum of the
  gradients converges absolutely in~$L^1(\Omega)^{d \times d}$.
  We have, pointwise,  the equality
  \begin{equation*}
    \sum_{j \in \N} \nabla \Big( \psi_j(\vecv_j - \vecv) \Big)
    =\sum_{j \in \N} \big( (\nabla \psi_j)(\vecv_j - \vecv) + \psi_j
    (\nabla \vecv_j - \nabla \vecv) \big),
  \end{equation*}
  where we have used that both sums are just finite sums, since the family
  $\mcQ_j^\ast$ is locally finite. Every summand in the last sum
  belongs to $L^1(\mcU_\lambda(\vecv) \cap \Omega)^{d\times d}$. For a
  finite subset $I \subset \N$, we have, thanks to
  Lemma~\ref{lem:QjPoin} and the locally finite overlaps of the
  $\mcQ_j^\ast$, that
  \begin{align*}
    \Sigma_I &:= \int_{\mcU_\lambda(\vecv)} \sum_{j \in \N \setminus
      I} \abs{ (\nabla \psi_j)(\vecv_j - \vecv) + \psi_j (\nabla
      \vecv_j - \nabla \vecv) } \,\dx
    \\
    &\leq c\sum_{j \in \N \setminus I} \int_{\mcQ_j^\ast}
    \frac{\abs{\vecv_j - \vecv}}{\ell_j} \,\dx + \sum_{j \in \N
      \setminus I} \int_{\mcQ_j^\ast} \abs{\nabla \vecv} \,\dx
    \\
    &\leq c\sum_{j \in \N \setminus I} \int_{\mcQ_j^{\ast\ast}}
    \abs{\nabla \vecv}\,\dx\leq c\sum_{j \in \N \setminus I} \lambda\,
    \abs{\mcQ_j}
    \leq c\, \int_{\mcU_\lambda(\vecv)} \chi_{\cup_{j \in \N
        \setminus I} \mcQ_j^\ast} \lambda\,\dx.
  \end{align*}
  Note that $\bigcup_{j \in \N \setminus I}\mcQ_j^\ast \subset
  \mcU_\lambda(\vecv)$ and $\lambda\, \abs{\mcU_\lambda(\vecv)} =
  \lambda \abs{\{M(\nabla\vecv)> \lambda\}} \leq c\,
  \norm[L^1(\Omega)]{\nabla \vecv}$ by the weak type
  estimate~\eqref{eq:HLmax_weak} and $\vecv \in W^{1,1}_0(\Omega)^d$.
  Thus, $\chi_{\cup_{j \in \N \setminus I} \mcQ_j^\ast} \lambda \leq
  \chi_{\mcU_\lambda(\vecv)} \lambda \in L^1(\R^d)$.  Therefore, it follows
  by $\chi_{\bigcup_{j \in \N \setminus I}\mcQ_j^\ast} \to 0$ and the
  Lebesgue's dominated convergence theorem that $\Sigma_I\to 0$ as $I
  \to \N$. Hence the sum 
  $\sum_{j \in \N} \nabla \Big( \psi_j(\vecv_j - \vecv) \Big)$ converges absolutely in
  $L^1(\Omega)^{d\times d}$, and the claim follows.
\end{proof}

\bigskip

\noindent
\begin{proof}[Proof of Theorem \ref{thm:Tlest}] We shall consider parts
\ref{itm:Tlestsupp}--\ref{itm:TlestLinfty} in the statement of the theorem separately.

  \ref{itm:Tlestsupp} The claim directly follows from $\vecv_\lambda-
  \vecv \in W^{1,1}_0(\mcU_\lambda \cap \Omega)^d$ (see Lemma~\ref{lem:TlW110}).

  \ref{itm:TlestL1} We begin by noting that
  \begin{align*}
    \chi_{\mcU_\lambda(\vecv)} \abs{\vecv_\lambda} &\leq \sum_{j \in
      \N} \chi_{\mcQ_j^\ast} \abs{\vec{v}_j} \leq \sum_{j \in \N}
    \chi_{\mcQ_j^\ast} \fint_{\mcQ_j^{\ast\ast}} \abs{\vec{v}} \,\dx.
  \end{align*}
  By Jensen's inequality and the local finiteness of the
  $\mcQ_j^{\ast\ast}$ we then deduce that
  \begin{align*}
    \int_\Omega \chi_{\mcU_\lambda(\vecv)} \abs{\vecv_\lambda}^s\, \dx
    &\leq \sum_{j \in \N} \abs{\mcQ_j^\ast} \bigg(
    \fint_{\mcQ_j^{\ast\ast}} \abs{\vec{v}} \,\dx\bigg)^s \leq \sum_{j
      \in \N} \int_{\mcQ_j^{\ast\ast}} \abs{\vec{v}}^s \,\dx \leq c\,
    \int_\Omega \abs{\vec{v}}^s\,\dx,
  \end{align*}
  for $s \in [1,\infty)$, which then proves~\ref{itm:TlestL1} for $s
  \in [1,\infty)$ using also that $\vecv_\lambda = \vecv$ outside
  of~$\mcU_\lambda(\vecv)$. The case $s=\infty$ follows by obvious
  modifications of the argument.

  \ref{itm:TlestW1h}
  We define $I_j := \{k \in \N \,:\, \mcQ_j^\ast \cap \mcQ_k^\ast \neq
  \emptyset\}$. Then, on every $\mcQ_j^\ast$ we have that
  \begin{align*}
    \nabla \vecv_\lambda  &= \nabla \Big(\sum_{k \in \N}
    \psi_k \vecv_k \Big) = \nabla \Big(\sum_{k \in I_j}
    \psi_k (\vecv_k -\vecv_j) \Big)  = \sum_{k \in I_j}  (\nabla
    \psi_k) (\vecv_k - \vecv_j),
  \end{align*}
  where we have used that $\sum_{k \in I_j} \psi_k =1$ on $\mcQ_j^\ast$. By
  Lemma~\ref{lem:QjPoin} we thus obtain
  \begin{align*}
    \chi_{\mcU_\lambda(\vecv)} \abs{\nabla \vecv_\lambda} &\leq
    c\,\sum_{j \in \N} \chi_{\mcQ_j^\ast} \sum_{k \in I_j}
    \fint_{\mcQ_k^\ast} \frac{\abs{\vecv - \vecv_k}}{\ell_k} \,\dx \leq
    c\, \sum_{j \in \N} \chi_{\mcQ_j^\ast} \sum_{k \in I_j}
    \fint_{\mcQ_k^{\ast\ast}} \abs{\nabla \vecv} \,\dx.
  \end{align*}
  The inequality in part~\ref{itm:TlestW1h} now follows by arguing as
  in part~\ref{itm:TlestL1}.

  \ref{itm:TlestLinfty} It follows from the final chain of inequalities in
  the proof of part~\ref{itm:TlestW1h} above, 
  Lemma~\ref{lem:QjPoin}~\ref{itm:QjPoin1b} and the local finiteness
  of the $\mcQ_j^{\ast\ast}$ that
  \begin{align*}
    \chi_{\mcU_\lambda(\vecv)} \abs{\nabla \vecv_\lambda} \leq c\, \lambda.
  \end{align*}
  Since $\vecv_\lambda = \vecv$ on $\mcH_\lambda(\vecv)$, we get the
  first part of the claim
  \begin{align*}
    \abs{\nabla \vecv_\lambda} &\leq c\,
    \lambda \chi_{\mcU_\lambda(\vecv) \cap \Omega} + \abs{\nabla
      \vecv} \chi_{\mcH_\lambda(\vecv)}.
  \end{align*}
  Recall that $\mcH_\lambda(\vecv) = (\R^d \setminus \Omega) \cup
  \{M(\nabla \vecv)\leq \lambda\}$. Now, $\vecv_\lambda = 0$ on $\R^d
  \setminus \Omega$ and $\abs{\nabla \vecv} \leq M(\nabla \vecv)$
  proves that $\abs{\nabla \vecv} \chi_{\mcH_\lambda(\vecv)} \leq
  \lambda$. This proves the second part of the claim.
\end{proof}

The following theorem is the analogue of Corollary \ref{cor:dremlip} for
Sobolev functions. Similar results can be found
in~\cite{DieningMalekSteinhauer:08} and~\cite{BreDieFuc:12}.
\begin{cor}
  \label{cor:remlip}
  Let $1< s< \infty$ and let $\{\vec{e}^n\}_{n\in\N}\subset W^{1,s}_0(\Omega)^d$ be
  a sequence, which converges to zero weakly in
  $W^{1,s}_0(\Omega)^d$, as $n\to\infty$.

  Then, there exists a
  sequence $\{\lambda_{n,j}\}_{n,j\in\N}\subset\R$ with $2^{2^j} \leq
  \lambda_{n,j}\leq 2^{2^{j+1}-1}$ such that the Lipschitz truncations
  $\vec{e}^{n,j} := \vec{e}^n_{\lambda_{n,j}}$, $n,j\in\N$, have the following
  properties:
  \begin{enumerate}[leftmargin=1cm,itemsep=1ex,label={\rm (\alph{*})}]
  \item \label{itm:remlip1} $\vec{e}^{n,j}\in W^{1,\infty}_0(\Omega)^d$
    and $\vec{e}^{n,j}=\vec{e}^n$ on $\mcH_{\lambda_{n,j}}$;
  \item \label{itm:remlip2} $\|\nabla\vec{e}^{n,j}\|_\infty\leq
    c\lambda_{n,j}$;
  \item  \label{itm:remlip3} $\vec{e}^{n,j} \to 0$ in
    $L^\infty(\Omega)^d$ as $n \to
    \infty$;
  \item \label{itm:remlip4} $\nabla\vec{e}^{n,j} \weak^\ast 0$ in
    $L^\infty(\Omega)^{d\times d}$ as  $n
    \to \infty$;
  \item \label{itm:remlip5} For all $n,j \in \N$ we have
    $\norm[s]{\lambda_{n,j} \chi_{\mcU_\lambda(\vec{e}^n)}}
    \leq c\, 2^{-\frac{j}{s}}\norm[s]{\nabla \vece^n}$, with a
    constant $c>0$ depending on $s$.
  \end{enumerate}
\end{cor}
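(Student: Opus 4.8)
The plan is to transcribe, in the present (continuous) setting, the argument already used inside the proof of Corollary~\ref{cor:dremlip}: the discrete truncations there were produced by first forming the continuous Lipschitz truncations $\vec{e}^n_{\lambda_{n,j}}$ and then applying $\Pndiv$, so it is exactly these continuous truncations whose properties are asserted here. Consequently all the ingredients are already at hand.

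First I would build the threshold sequence $\{\lambda_{n,j}\}$ together with \ref{itm:remlip5} by a dyadic pigeonhole argument. For $g\in L^s(\R^d)$ the layer-cake identity gives
\begin{align*}
  \int_{\R^d}\abs{g}^s\,\dx
  &= \int_{\R^d}\int_0^\infty s\,t^{s-1}\chi_{\{\abs{g}>t\}}\,\dt\,\dx
  \\
  &\ge \sum_{j\in\N}\sum_{m=2^j}^{2^{j+1}-1} 2^{ms}\,\abs{\{\abs{g}>2^{m+1}\}};
\end{align*}
applying this to $g=2\,M(\nabla\vec{e}^n)$ and using the $L^s$-continuity of the maximal operator~\eqref{eq:HLmax} bounds the right-hand side by $2^s c_s\norm[s]{\nabla\vec{e}^n}^s$. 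Since, for fixed $n,j$, the inner sum has $2^j$ terms, there is an integer $\lambda_{n,j}\in\{2^{2^j},\dots,2^{2^{j+1}-1}\}$ with $\lambda_{n,j}^s\,\abs{\{M(\nabla\vec{e}^n)>\lambda_{n,j}\}}\le 2^{-j}2^s c_s\norm[s]{\nabla\vec{e}^n}^s$. Recalling from~\eqref{eq:U} that $\mcU_{\lambda_{n,j}}(\vec{e}^n)=\{M(\nabla\vec{e}^n)>\lambda_{n,j}\}$, assertion~\ref{itm:remlip5} follows at once; in contrast with Corollary~\ref{cor:dremlip}, no analogue of Lemma~\ref{l:OsubU} is needed, so the factor $\kappa$ disappears.

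With $\lambda_{n,j}$ fixed, \ref{itm:remlip1} and \ref{itm:remlip2} are read off directly from Theorem~\ref{thm:Tlest}: the membership $\vec{e}^{n,j}\in W^{1,\infty}_0(\Omega)^d$ and the identity $\vec{e}^{n,j}=\vec{e}^n$ on $\mcH_{\lambda_{n,j}}$ are part~\ref{itm:Tlestsupp}, and $\abs{\nabla\vec{e}^{n,j}}\le c\,\lambda_{n,j}$ is part~\ref{itm:TlestLinfty}. For \ref{itm:remlip4}, I would use that $\{\vec{e}^n\}$, being bounded in $W^{1,s}_0(\Omega)^d$ and weakly null there, converges to $0$ strongly in $L^s(\Omega)^d$ by the compact embedding $W^{1,s}_0(\Omega)^d\hookrightarrow \hookrightarrow L^s(\Omega)^d$; Theorem~\ref{thm:Tlest}\ref{itm:TlestL1}--\ref{itm:TlestW1h} then show that, for each fixed $j$, $\{\vec{e}^{n,j}\}_{n}$ is bounded in $W^{1,s}_0(\Omega)^d$ while $\norm[s]{\vec{e}^{n,j}}\le c\norm[s]{\vec{e}^n}\to0$, so uniqueness of weak limits forces $\vec{e}^{n,j}\weak0$ in $W^{1,s}_0(\Omega)^d$ as $n\to\infty$; since $\{\nabla\vec{e}^{n,j}\}_{n}$ is in addition bounded in $L^\infty$ by~\ref{itm:remlip2}, this yields $\nabla\vec{e}^{n,j}\weak^\ast0$ in $L^\infty(\Omega)^{d\times d}$. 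For \ref{itm:remlip3}, note that for fixed $j$ the $\vec{e}^{n,j}$ are uniformly Lipschitz with constant $c\,\lambda_{n,j}\le c\,2^{2^{j+1}}$ and vanish on $\partial\Omega$, hence are uniformly bounded in $C(\bar\Omega)^d$; Arzel\`a--Ascoli together with $\vec{e}^{n,j}\to0$ in $L^s$ identifies the uniform limit as $0$, i.e. $\vec{e}^{n,j}\to0$ in $L^\infty(\Omega)^d$.

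I do not expect a genuine obstacle here: the statement is a simpler mirror of an argument already carried out in the excerpt. The only point requiring care is the bookkeeping of the two indices --- $\lambda_{n,j}$ depends on $\vec{e}^n$ but is confined to a $j$-dependent dyadic window, so all limits in \ref{itm:remlip3}--\ref{itm:remlip4} must be taken at fixed $j$ with $n\to\infty$ --- together with the (automatic) observation that each estimate invoked from Theorem~\ref{thm:Tlest} has a constant independent of the truncation level $\lambda$.
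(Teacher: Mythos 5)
Your proposal is correct and matches the paper's approach: the paper's own proof of Corollary~\ref{cor:remlip} is literally the one sentence ``The assertions follow by adopting the proof of Corollary~\ref{cor:dremlip},'' and what you have written out is exactly that adaptation — the same dyadic layer-cake pigeonhole argument for $\lambda_{n,j}$ and \ref{itm:remlip5} (now without the $\kappa$ from Lemma~\ref{l:OsubU}, since no mesh enlargement is involved), \ref{itm:remlip1}–\ref{itm:remlip2} read off from Theorem~\ref{thm:Tlest}, \ref{itm:remlip4} via weak convergence in $W^{1,s}_0$ plus the uniform $L^\infty$ bound, and \ref{itm:remlip3} via the compact embedding / Arzelà–Ascoli argument. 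The only caveats you flag (limits at fixed $j$, constants independent of $\lambda$) are the right ones and are handled correctly.
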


\begin{proof}
  The assertions follow by adopting the proof of Corollary \ref{cor:dremlip}.
\end{proof}


\end{document}